\journal{Journal of \LaTeX\ Templates}
\nonstopmode \numberwithin{equation}{section}
\newtheorem{theorem}{Theorem}[section]
\newtheorem{lemma}[theorem]{Lemma}
\newtheorem{proposition}[theorem]{Proposition}
\newtheorem{remark}[theorem]{Remark}
\newtheorem{corollary}[theorem]{Corollary}
\newtheorem{definition}[theorem]{Definition}
\newcommand{\norm}[1]{\left\|#1\right\|}
\newcommand{\abs}[1]{\left\lvert#1\right\rvert}
\begin{document}
	
	\begin{frontmatter}
		
		\title{Optimal Control and Approximate controllability of fractional  semilinear differential inclusion involving $\psi$- Hilfer fractional derivatives\tnoteref{mytitlenote}}
		\tnotetext[mytitlenote]{Fully documented templates are available in the elsarticle package on \href{http://www.ctan.org/tex-archive/macros/latex/contrib/elsarticle}{CTAN}.}
		
		\author{Bholanath Kumbhakar\fnref{myfootnote}}
		\address{Department of Mathematics, Indian Institute of Technology Roorkee}
		\fntext[myfootnote]{Email id: bkumbhakar@mt.iitr.ac.in}
		
		
		\author{Dwijendra Narain Pandey\corref{mycorrespondingauthor}}
		\cortext[mycorrespondingauthor]{Corresponding author}
		\address{Department of Mathematics, Indian Institute of Technology Roorkee}
		\ead{dwij@ma.iitr.ac.in}
		
		
		\begin{abstract}
		The current paper initially studies the optimal control of linear $\psi$-Hilfer fractional derivatives with state- dependent control constraints and optimal control for a particular type of cost functional. Then, we investigate the approximate controllability of the abstract fractional semilinear differential inclusion involving $\psi$-Hilfer fractional derivative in reflexive Banach spaces. It is known that the existence, uniqueness, optimal control, and approximate controllability of fractional differential equations or inclusions have been demonstrated for a similar type of fractional differential equations or inclusions with different fractional order derivative operators. Hence it has to research fractional differential equations with more general fractional operators which incorporate all the specific fractional derivative operators. This motivates us to consider the $\psi$-Hilfer fractional differential inclusion. We assume the compactness of the corresponding semigroup and the approximate controllability of the associated linear control system and define the control with the help of duality mapping. We observe that convexity is essential in determining the controllability property of semilinear differential inclusion. In the case of Hilbert spaces, there is no issue of convexity as the duality map becomes simply the identity map. In contrast to Hilbert spaces, if we consider reflexive Banach spaces, there is an issue of convexity due to the nonlinear nature of duality mapping.
		The novelty of this paper is that we overcome this convexity issue and establish our main result. Finally, we test our outcomes through an example.
		\end{abstract}
		
		\begin{keyword}
			$\psi$-Hilfer fractional derivative\sep Differential Inclusion \sep Optimal Control\sep Approximate Controllability
			\MSC[2020] 34A08\sep  93B05 \sep 49J20
		\end{keyword}
		
	\end{frontmatter}
	
	\section{Introduction}
This article discusses optimal control and the approximate controllability of fractional evolution inclusions involving $\psi$-Hilfer fractional derivatives.
We start with the optimal control problem given below: \\
\textbf{Problem I:} Find $(q^*(\cdot),u^*(\cdot))\in R_U(x_0)$ such that
\begin{equation}
	J(q^*(\cdot),u^*(\cdot))=\inf_{(q(\cdot),u(\cdot))\in R_U(x_0)} J(q(\cdot),u(\cdot)),
\end{equation}
where $R_U(x_0)$ is the solution set of the feedback control system involving $\psi$-Hilfer fractional derivative given by
\begin{equation}
	\begin{cases}
		^{H}D^{\alpha,\beta;\psi}_{a+} q(t)=Aq(t)+Bu(t), t\in [a,b]\\
		(I_{a+}^{1-\gamma;\psi}q)(a)=x_0,
	\end{cases}
\end{equation}
where $u(t)\in U(t,q(t))$ and $J:C^{1-\gamma;\psi}([a,b],X)\times L^2([a,b],Y)\to \mathbb{R}$ is defined by
\begin{equation}
	J(q(\cdot), u(\cdot))=\int_T h(t,q(t),u(t))dt, \forall (q(\cdot),u(\cdot))\in C^{1-\gamma;\psi}([a,b],X)\times L^2([a,b],Y).
\end{equation}
In the above, $A$ is a closed linear operator generating a strongly continuous semigroup $\{T(t)\}_{t\ge 0}$ of bounded linear operators defined on a Banach space $X$ and $^{H}D^{\alpha,\beta;\psi}_{a+}$ is the left $\psi$-Hilfer fractional derivative of order $\alpha$ and type $\beta$ with $\frac{1}{2}<\alpha\le1$, $0\le \beta\le 1$. Also, $B: Y\to X$ is a bounded linear map that describes the control action, $Y$ being a separable Hilbert space, $U:[a,b]\times X\multimap Y$ is a multivalued map that describes the control constraints. The space $C^{1-\gamma;\psi}([a,b],X)$ is defined as
\begin{equation}
	C^{1-\gamma;\psi}([a,b],X)=\{y\in C((a,b],X): (\Psi(t,a))^{1-\gamma}y(t)\in C([a,b],X)\},
\end{equation}
$ \gamma=\alpha+\beta(1-\alpha).$
We study existence as well as uniqueness of Problem I.

Then we consider a particular case of Problem I, namely\\
\textbf{Problem II:} 
We obtain the existence of optimal control by minimizing the cost functional $J: C^{1-\gamma;\psi}([a,b], X)\times L^2([a,b], Y)\to \mathbb{R}$ given by
\begin{equation}
	J(q(\cdot),u(\cdot))=\norm{q(b)-x_b}_X^2+\lambda \int_{a}^{b}\norm{u(t)}_Y^2dt,
\end{equation}
where $x_b\in X, \lambda>0$ and $q(\cdot)$ is the unique mild solution of the fractional linear control problem involving $\psi$-Hilfer fractional derivative given by
\begin{equation}\label{A1}
	\begin{cases}
		^{H}D^{\alpha,\beta;\psi}_{a+} q(t)=Aq(t)+Bu(t), t\in [a,b]\\
		(I_{a+}^{1-\gamma;\psi}q)(a)=x_0.
	\end{cases}
\end{equation}
In this problem, we also derive the explicit expression of the optimal control.
\begin{remark}
	Note that the solution of Problem II also gives us the approximate controllability for linear control system \eqref{A1}. Indeed, when minimizing the functions given in
	Problem II, we are minimizing the balance between the two terms appearing there. The first term allows the
	state $q$ at time $b$ to get closer to the target state $x_b$ (approximate controllability of linear system),
	while the second one penalizes the use of costly control.
\end{remark}

Finally, we derive the approximate controllability of fractional differential evolution inclusion involving $\psi$-Hilfer fractional derivative given by\\
\textbf{Problem III:}
\begin{equation}\label{A2}
	\begin{cases}
		^{H}D^{\alpha,\beta;\psi}_{a+} q(t)\in Aq(t)+F(t,q(t))+Bu(t), t\in [a,b]\\
		(I_{a+}^{1-\gamma;\psi}q)(a)=x_0.
	\end{cases}
\end{equation} 
Here $F:[a,b]\times X\multimap X$ is given a multivalued map. We use the optimal control we derive in Problem II to study the approximate controllability for Problem III.

It is worth noting that most existing research works focused on integer order dynamics. However, in the real world, it has been shown that many natural phenomena cannot be effectively interpreted by the integer order dynamics, such as chemotaxis behavior and food searching of germs \cite{wu2019reach,cohen2001biofluiddynamics,kozlovsky1999lubricating}. Nevertheless, fractional order dynamics possess excellent memory and hereditary properties, resulting in superior performance and stronger robustness than standard integer order dynamic systems \cite{ozalp2011fractional},\cite{soczkiewicz2002application}. It has been proved that some cases, such as the macromolecule fluids, lateral inhibition of biological vision systems, and automobiles running on the road's surface containing viscoelastic materials, can be more accurately described by fractional-order dynamic systems \cite{bagley1983fractional,lu2013stability,ren2011distributed}. Researchers pointed out that many physical systems are unsuitable to be characterized by integer order dynamics \cite{zhang2016stability}, such as high-speed aircraft traveling on rainy days or snowy days and vehicles moving on top of sand or muddy road \cite{bagley1986fractional,anastassiou2012fractional}.

Most of the results available only focus on linear models or nonlinear models with continuous inherent
dynamics. However,
in real life, some phenomena may be well described by discontinuous dynamics \cite{wang2020global}. For example, thermostats implement on-off controllers to regulate room temperature \cite{bennett1993history}. The controller is a discontinuous
function of the room temperature. In nonsmooth mechanics, the motion of rigid bodies is subject to velocity jumps and force discontinuities as a result of friction and impact\cite{brogliato1999nonsmooth,pfeiffer1996multibody}. In robotic manipulation of objects utilizing mechanical contact, discontinuities occur naturally from interaction with the environment\cite{pereira2004decentralized}. In the physical field, the characteristics of an ideal diode possessing a very high slope in the conducting region can be more precisely modeled by a differential equation with a discontinuous right-hand side. For discontinuous dynamic systems, a continuously differentiable solution is not guaranteed. In this case, the theory of differential inclusion comes into the picture. Discontinuities also arise in models governed by fractional differential equations. The article \cite{forti2003global} considered the global leader-following consensus of fractional order multi-agent systems, where the inherent dynamics are modeled to be discontinuous. In \cite{ding2020finite}, the authors studied drive-response synchronization in fractional order memristine neural networks with switching jumps (FMNNs) mismatch. Since the right-hand sides of the equations of FMNNs are discontinuous, FMNNs have no solution in the ordinary sense, and theories of fractional order Filippov differential inclusions are used to treat FMNNs.

Many fractional derivatives have been developed due to Riemann, Liouville, Riesz, Grunwald, Letnikov, Marchaud, Weyl, Caputo, Hadamard, and other famous researchers \cite{podlubny1999fractional}. This led to several fractional differential equations. Each and every fractional derivative is essential in its own field. The most used concept is the fractional derivative in the sense of Riemann-Liouville. In \cite{heymans2006physical}, the authors applied Riemann-Liouville fractional derivatives in the context of viscoelastic models. On a series of examples
from the field of viscoelasticity they
demonstrate that it is possible to
attribute physical meaning to initial
conditions expressed in terms of
Riemann–Liouville fractional derivatives, and that it is possible to obtain initial values for such initial
conditions by appropriate measurements or observations. In the Caputo sense, the concept of a fractional derivative allows us to formulate the Cauchy problem similar to what occurs in the abstract Cauchy problem with a derivative of integer order. The $\psi$-Caputo fractional derivative has a wide range of applications in various fields of science and engineering, including fluid dynamics, electromagnetism, finance, and control systems. Some specific applications of the $\psi$ Caputo fractional derivative include: modeling of viscoelastic materials \cite{han2022applications}, fractional-order control systems, population models \cite{awadalla2021psi}, modeling drug concentration in blood \cite{awadalla2022modeling}, fractional thermostat model \cite{aydi2020positive} and so on.
The authors in \cite{joshi2021chaos} discussed the chaos of calcium diffusion in Parkinson's infectious disease model and treatment mechanism via Hilfer fractional derivative. The article \cite{singh2020analysis} deals with certain new and exciting features of the fractional blood alcohol model associated with the powerful Hilfer fractional operator. 

Therefore, we need to unify all these fractional derivatives into one most general system. Sousa and Olivera \cite{sousa2018psi} presented a Hilfer version of fractional derivative, viz, $\psi$-Hilfer fractional derivative, which incorporates the Hilfer fractional derivative \cite{hilfer2000applications} as well as includes a broad class of well known fractional derivatives including most widely used Caputo and Riemann-Liouville fractional derivative. The list of all possible fractional derivatives, the cases of $\psi$-Hilfer fractional derivatives, has been provided in \cite{sousa2018psi}. This new type of derivative is rich in applications. The authors in \cite{thaiprayoon2021qualitative} presented an application of $\psi$-Hilfer fractional derivative in a fractional thermostat model.

As is commonly known, mathematical control theory has several fundamental properties, one of which is controllability. Controllability roughly translates to using the control function incorporated in the system to steer the dynamical system's state to an appropriate state. Controllability is essential in modern control theory and engineering because it is connected to pole assignment, structural decomposition, and optimal quadratic control. A comprehensive controllability theory and applications review may be found in \cite{curtain1978infinite}.

Several controlling concepts should be distinguished, for instance, exact controllability, approximate controllability, null controllability, and optimal control. 
Among exact, null, and approximate controllability, approximate controllability is more beneficial in a real-life situation. The approximate controllability of an abstract semilinear system is more important in population dynamics than exact controllability. The optimal control of a fractional distributed parameter system is an optimal control for which system dynamics are defined with fractional differential equations. There is a vast literature towards existence, uniqueness and controllability for fractional differential equations/inclusions. For instance, we refer the readers to the following papers: \cite{yang2022optimal},\cite{lian2018time},\cite{yang2016approximate} for Riemann-Lioville fractional differential equations , \cite{matychyn2018time},\cite{wang2011class},\cite{sakthivel2013approximate} for Caputo, for Hilfer \cite{yang2016approximate}, and for other type of derivatives we cite \cite{wang2011analysis},\cite{kien2022optimal},\cite{qin2014approximate},\cite{yang2022optimal} and references cited therein.

One can see that existence, uniqueness, and approximate controllability for fractional differential equations have been demonstrated for a similar type of fractional differential equations (FDEs) with different fractional order derivative operators. Hence it has to research FDEs with more general fractional operators which incorporate all the specific fractional derivative operators. Therefore it is imperative to analyze the FDE with a broad class of fractional derivative operators that comprise various definitions of well-known fractional derivatives. To this aim, recently, the authors in \cite{bouacida2023controllability} studied the approximate controllability for $\psi$-Hilfer backward fractional differential equations. Also, the authors in \cite{guechi2021analysis} analyzed optimal control problem for $\psi$-Hilfer fractional differential equations.  To our best knowledge, there is no literature for studying the approximate controllability and optimal control problems driven by $\psi$-Hilfer fractional evolution inclusion. Therefore, to fill this gap, we initially consider a nonlinear optimal control problem governed by $\psi$-Hilfer linear control problem with state dependent control constraints (Problem I) and establish the existence theorem for this linear control problem. Next, we consider $\psi$-Hilfer fractional evolution inclusion (Problem III) and study the approximate controllability result.

Before going into the crux of the main result, we briefly mention the working procedure for approximate controllability.  Throughout the manuscript we use the convention that $\psi(t)-\psi(s)=\Psi(t,s)$.

Let $x_1\in X$ be the desired target.
Define $S_F:C^{1-\gamma;\psi}([a,b],X)\multimap L^1([a,b],X)$
\begin{equation}\label{1.8}
	S_F(q)=\{f\in L^1([a,b],X): f(t)\in F(t,q(t))~\text{a.a.}~t\in [a,b]\}.
\end{equation}
The notion of a mild solution of \eqref{A2} is given by the following definition:
\begin{definition}
	We say that $q(\cdot)\in C^{1-\gamma;\psi}([a,b],X)$ is a mild solution of the given fractional control problem \eqref{A2} if it satisfies the following integral equation
	\begin{equation}
		q(t)= S_{\alpha, \beta}(\Psi(t,a))x_0 + \int_{a}^{t}  K_{\alpha}(\Psi(t,s)) [f(s)+Bu(s)] \psi'(s)ds,
	\end{equation}
	where $f\in S_F(q)$ and the operators $S_{\alpha, \beta}(t) : X\to X ~ and~  K_{\alpha}(t): X \to X $  are defined by 
	\begin{equation}
		S_{\alpha, \beta}(t)= I^{\gamma-\alpha;\psi}_{a+}K_{\alpha}(t),	K_{\alpha}(t)= t^{\alpha-1} P_{\alpha}(t)~~
		\text{where}~~
		P_{\alpha}(t)= \int_{0}^{\infty} \alpha \theta  M_{\alpha}(\theta)  T(t^{\alpha}\theta) d\theta.
	\end{equation}
\end{definition}
For each $\epsilon>0$, we define a multivalued map $$\Gamma_{\epsilon}:C^{1-\gamma;\psi}([a,b],X)\multimap C^{1-\gamma;\psi}([a,b],X)$$ where for each $q\in C^{1-\gamma;\psi}([a,b],X)$, $\Gamma_{\epsilon}(q)$ consists functions $y\in C^{1-\gamma;\psi}([a,b],X)$ which satisfies
\begin{equation}\label{A3}
	y(t)=S_{\alpha,\beta}(\Psi(t,a))x_0+\int_{a}^{t}\psi^{\prime}(s)(\Psi(t,s))^{\alpha-1}P_{\alpha}(\Psi(t,s))[f(s)+Bu(s)]ds, 
\end{equation}
$t\in [a,b],$ where $f\in S_F(q)$ and $u\in L^2([a,b],Y)$ is given by
\begin{equation}\label{A4}
	u(t)=\psi^{\prime}(t)(\psi(b)-\psi(t))^{\alpha-1}B^*P^*_{\alpha}(\psi(b)-\psi(t))J(\epsilon I +R(b)J)^{-1}N(f), t\in [a,b].
\end{equation}
In the above $R(b):X^*\to X$ is given by
\begin{equation}
	R(b)=\int_{a}^{b}\left\{\psi^{\prime}(s)(\Psi(b,s))^{\alpha-1}\right\}^2P_{\alpha}(\Psi(b,s))BB^*P_{\alpha}^*(\Psi(b,s))ds,
\end{equation}
and
\begin{equation}\label{A5}
	N(f)=x_1-S_{\alpha,\beta}(\Psi(b,a))x_0-\int_{a}^{b}\psi^{\prime}(s)K_{\alpha}(\Psi(b,s))f(s)ds.
\end{equation}
The map $J: X\multimap X^*$ is called duality mapping, which is defined as
\begin{equation}
	J(x)=\{x^*\in X^*: \langle x, x^*\rangle=\norm{x}_X^2=\norm{x^*}^2_{X^*}\}, \forall x\in X.
\end{equation} 
Since the space $X$ is a reflexive Banach space, then $X$ can be renormed so that $X$ and $X^*$ become strictly convex \cite{asplund1967averaged}. As a result, the mapping $J$ becomes single valued as well as demicontinuous, that is, 
\begin{equation}
	x_k\to x ~\text{in}~ X ~\text{implies}~ J(x_k)\rightharpoonup J(x) ~\text{in}~ X^*, ~\text{as}~k\to \infty.
\end{equation}
The fixed points of the map $\Gamma_{\epsilon}$ are the mild solutions of \eqref{A2}.

To prove the fixed point of the map $\Gamma_{\epsilon}$ the following difficulties may arise:
\begin{itemize}
	\item[(1)] Note that for any $q\in C^{1-\gamma;\psi}([a,b], X)$, $\Gamma_{\epsilon}(q)$ fails to be convex due to the nonlinear nature of the duality map. It is well known that convexity plays a crucial role in most multivalued fixed point theorems. Therefore, one may be unable to apply the multivalued fixed point theorem to derive the fixed point of the map. To overcome this problem, we use Schauder fixed point theorem to derive the result.
	\item[(2)] Upper semicontinuity of the multivalued map $F$ is insufficient to guarantee fixed points' existence. We need lower semicontinuity also.
	\item[(3)] Further, unlike control problems governed by ordinary differential equations of integer order, the control problems generated by fractional differential equations are quite complicated. We meet some difficulties in estimating the bounds of state variables. The main reason is the appearances of the $L^1$-function $[0,t]\ni s \mapsto\frac{1}{(t-s)^{1-\alpha}}$. To overcome these difficulties, we shall use a generalized Gronwall inequality. 
\end{itemize}
We overcome these issues and successfully applying Schauder's fixed point theorem we prove the existence of fixed points, say $q_{\epsilon}\in C^{1-\gamma;\psi}([a,b],X)$ for the maps $\Gamma_{\epsilon}$, for each $\epsilon>0$. Clearly, each $q_{\epsilon}, \epsilon>0$ are the mild solutions of Problem \eqref{A2}. We conclude our result by proving $q_{\epsilon}(b)\to x_1$.

The article is organised as follows. Section 2 presents some prerequisite materials which we use to deduce the main results. Section 3 deals with optimal control of $\psi$-Hilfer fractional linear control problem. Section 4 stands for approximate controllability of $\psi$-Hilfer fractional evolution inclusion in reflexive Banach spaces. Finally, Section 5 presents an application of Main results of Section 4 and closes this article.

\section{Preliminaries}
We first introduce some basic definitions, notations, preliminaries, results, etc., and then we give some helpful assumptions and lemmas throughout this article.
Here $X$  always denotes a Banach space induced by the norm $\norm{\cdot}$ and  $C([a,b],X)$ denotes the Banach space of continuous functions from $[a,b]$ to $X$  with the norm $\norm{q(\cdot)}_{C([a,b],X)}=\sup_{t \in [a,b]}\norm{q(t)}_X$.\\
Let us introduce the space provided in \cite{sousa2018psi}
\begin{equation}
	C^{1-\gamma;\psi}([a,b],X)=\{q(\cdot)\in C((a,b],X): (\Psi(t,a))^{1-\gamma}q(t)\in C([a,b],X) \}.
\end{equation}
Then $C^{1-\gamma;\psi}([a,b],X)$ is a Banach space under the norm
\begin{equation}
	\norm{q(\cdot)}_{C^{1-\gamma;\psi}([a,b],X)}=\sup_{t \in [a,b]}\norm{(\Psi(t,a))^{1-\gamma}q(t)}.
\end{equation}
Throughout this article, all the integrations are taken in the Bochner sense.

We give the definition of $\psi$-Hilfer fractional derivative.
\begin{definition}($\psi$-Hilfer Fractional Derivative)\cite{sousa2018psi}
	Let $n-1<\alpha<n,n \in \mathbb{N}, -\infty \le a<b\le \infty$. Also, let $F,\psi \in C^n([a, b],X) $ such that $\psi$ is increasing and  $\psi'(t) \neq 0$ for all $t \in [a,b]$. The $\psi$-Hilfer fractional derivative $	^{H}D_{\sigma+}^{\alpha, \beta ; \psi}$ of order $\alpha$ and type $\beta$ with $0\le \beta \le 1$ of $F$ is defined by
	\begin{equation*}
		^{H}D_{a+}^{\alpha, \beta ; \psi}  F(t)=  I^{\beta(n-\alpha);\psi}_{a+}\left( \frac{1}{\psi'(t)}\frac{d}{dt}\right)^n I^{(1-\beta)(n-\alpha);\psi}_{a+}F(t).
	\end{equation*}
\end{definition}
It is well known that $\psi$-Hilfer fractional derivative is the most general fractional derivative.

\begin{theorem}(Schauder’s fixed point theorem)
	Let $X$ be a Banach space and $D \subset $X, a convex, closed, and bounded set. If $T :  D\to D$ is a continuous operator such that
	$T(D)$ is relatively compact in $X$, then $T$ has at least one fixed point in $D$.
\end{theorem}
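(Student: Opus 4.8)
The plan is to reduce the infinite-dimensional statement to Brouwer's fixed point theorem by means of a finite-dimensional approximation, the so-called Schauder projection. First I would exploit the hypothesis that $T(D)$ is relatively compact: its closure $\overline{T(D)}$ is then compact, so for each $\epsilon>0$ it admits a finite $\epsilon$-net $\{y_1,\dots,y_n\}\subset T(D)$, meaning the open balls of radius $\epsilon$ centred at the $y_i$ cover $\overline{T(D)}$. Since each $y_i\in T(D)\subset D$ and $D$ is convex, the convex hull $D_\epsilon=\mathrm{conv}\{y_1,\dots,y_n\}$ is a compact convex subset of $D$ contained in the finite-dimensional subspace spanned by the $y_i$.

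Next I would build the Schauder projection onto $D_\epsilon$. Setting $\lambda_i(y)=\max\{0,\epsilon-\norm{y-y_i}\}$ for $y\in\overline{T(D)}$, the functions $\lambda_i$ are continuous and nonnegative, and for each such $y$ at least one $\lambda_i(y)$ is strictly positive because the balls cover $\overline{T(D)}$. Hence
\[
P_\epsilon(y)=\frac{\sum_{i=1}^n\lambda_i(y)\,y_i}{\sum_{i=1}^n\lambda_i(y)}
\]
is a well-defined continuous map $P_\epsilon:\overline{T(D)}\to D_\epsilon$. Since only those indices $i$ with $\norm{y-y_i}<\epsilon$ contribute to the convex combination, writing $P_\epsilon(y)-y$ as a convex combination of the vectors $y_i-y$ immediately gives $\norm{P_\epsilon(y)-y}<\epsilon$ for every $y\in\overline{T(D)}$.

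Now consider the composition $T_\epsilon=P_\epsilon\circ T$ restricted to $D_\epsilon$. It is continuous and maps $D_\epsilon$ into itself, and $D_\epsilon$ is a compact convex set lying in a finite-dimensional normed space; Brouwer's fixed point theorem therefore yields a point $x_\epsilon\in D_\epsilon$ with $T_\epsilon(x_\epsilon)=x_\epsilon$. The approximation estimate then gives $\norm{x_\epsilon-T(x_\epsilon)}=\norm{P_\epsilon(T(x_\epsilon))-T(x_\epsilon)}<\epsilon$, so $x_\epsilon$ is an approximate fixed point of $T$.

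Finally I would pass to the limit. Choosing $\epsilon=1/k$ produces a sequence $(x_k)$ in $D$ with $\norm{x_k-T(x_k)}<1/k$. Because $T(x_k)\in T(D)$ and $T(D)$ is relatively compact, a subsequence $T(x_{k_j})$ converges to some $z$, and $z\in D$ since $D$ is closed. The bound $\norm{x_{k_j}-T(x_{k_j})}\to 0$ forces $x_{k_j}\to z$ as well, and continuity of $T$ gives $T(x_{k_j})\to T(z)$; comparing the two limits yields $T(z)=z$, the desired fixed point. The one genuinely nontrivial ingredient here is Brouwer's theorem, which supplies the finite-dimensional fixed points; the remaining work is the routine verification that the Schauder projection is continuous and uniformly close to the identity on the compact set $\overline{T(D)}$, together with the standard compactness argument in the final limit.
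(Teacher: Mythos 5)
Your proof is correct: it is the classical argument for Schauder's fixed point theorem, reducing to Brouwer's theorem via a finite $\epsilon$-net of the compact set $\overline{T(D)}$ and the associated Schauder projection $P_\epsilon$, then extracting the fixed point as a limit of approximate fixed points. Note that the paper does not prove this statement at all — it is quoted in the Preliminaries as a standard tool (and used later to obtain fixed points of the maps $\Gamma_\epsilon$), so there is no internal proof to compare against; your argument is the standard textbook one and fills that role correctly. The only points worth making explicit in a polished write-up are that $D$ must be nonempty, that the net points $y_i$ can indeed be chosen inside $T(D)$ because $T(D)$ is dense in its closure, and that you invoke Brouwer's theorem in its form for compact convex subsets of a finite-dimensional space rather than just for balls.
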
 
Let $B(Y)$ be the $\sigma$- algebra of Borel sets in $Y$ and let $\Sigma\times B(Y)$ be the $\sigma$-algebra of sets in $\Omega\times Y$ generated by sets $A\times B$, where $A\in \Sigma,$ and $B\in B(Y)$.
\begin{definition}
	A multivalued mapping $\Gamma: \Omega\times Y\multimap X$ is called $\Sigma\times B(Y)$ measurable if
	\begin{equation*}
		\Gamma^{-1}(V)=\{(w,y)\in \Omega\times Y: \Gamma (w,y)\cap V\neq \phi\}\in \Sigma\times B(Y),
	\end{equation*}
	for any closed set $V\subset X$.\\
\end{definition} 
For more details on the multivalued maps, we refer to the reader the books \cite{papageorgiou1997handbook, aubin2012differential} and the references cited therein.\\
Next we recall that a subset $\Sigma\subset L^1([0,\nu];X)$ is called uniformly integrable if for every $\epsilon>0$ there is a $\delta(\epsilon)>0$ such that
\begin{equation*}
	\int_{E}\norm{f(s)}ds\le \epsilon,
\end{equation*}
for every measurable subset $E\subset [0,\nu]$ whose Lebesgue measure is less than or equal to $\delta(\epsilon)$, and uniformly with respect to $f\in \Sigma$.\\
\begin{remark}
	\begin{itemize}
		\item[(1)]  A simple argument involving H$\ddot{o}$lder's inequality shows that each bounded subset $\Sigma\subset L^p([0,\nu];X)$ with $1<p\le \infty$ is uniformly integrable.
		\item[(2)]  If $\Sigma$ is a subset of $L^1([0,\nu];X)$, for which there exists a $g\in L^1([0,\nu];\mathbb{R}^+)$ such that $\norm{f(t)}\le g(t)$, for every $f\in \Sigma$ and a.a. $t\in [0,\nu]$, then $\Sigma$ is uniformly integrable.
	\end{itemize}
\end{remark}
We need some weak compactness criterion in $L^1$ space. The best-known result in this direction is the celebrated Dunford-Pettis theorem.
\begin{theorem}[Dunford Pettis theorem]\label{DF}\cite{MR1132408}
	Let $X$ be a Banach space and $\Sigma\subset L^1([0,\nu],X)$ be uniformly integrable. Suppose there exists $C:[0,\nu]\multimap X$ such that $\sigma(t)\in C(t)$ for every $\sigma\in \Sigma$, and $C(t)$ is relatively weakly compact for all $t\in [0,\nu]$, then $\Sigma$ is relatively weakly compact $L^1([0,\nu],X)$.
\end{theorem}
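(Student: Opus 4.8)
The plan is to obtain the relative weak compactness of $\Sigma$ from the classical vector-valued criterion of Dunford and Pettis, which asserts that a subset $K\subset L^1([0,\nu],X)$ over a finite measure space is relatively weakly compact if and only if it is bounded, uniformly integrable, and, for every measurable set $A\subset[0,\nu]$, the set of integrals $\{\int_A \sigma(s)\,ds:\sigma\in\Sigma\}$ is relatively weakly compact in $X$. Two of these three ingredients come for free here: uniform integrability is exactly our hypothesis, and $L^1$-boundedness follows from it, since one may cover the finite interval $[0,\nu]$ by finitely many measurable pieces each of measure at most $\delta(1)$ and add the corresponding estimates to get $\sup_{\sigma\in\Sigma}\norm{\sigma}_{L^1}<\infty$. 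The entire burden therefore falls on the setwise condition on the integrals.

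First I would convexify. Replacing each value set by its closed convex hull $\overline{\mathrm{co}}\,C(t)$ — still weakly compact by the Krein--\v{S}mulian theorem — the multifunction $t\mapsto\overline{\mathrm{co}}\,C(t)$ has weakly compact convex values, and it inherits the measurability needed for a selection theory from the fact that every $\sigma\in\Sigma$, being Bochner measurable, has essentially separable range. To control the integrals I would truncate using uniform integrability. Writing $B=\sup_{\sigma}\norm{\sigma}_{L^1}$, Chebyshev's inequality shows that $\{t:\norm{\sigma(t)}>M\}$ has measure at most $B/M$, so choosing $M$ with $B/M\le\delta(\epsilon)$ makes the tail $\norm{\int_{A\cap\{\norm{\sigma}>M\}}\sigma\,ds}\le\epsilon$ uniformly in $\sigma\in\Sigma$. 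The complementary, norm-bounded part $\int_{A}\sigma\,\mathbf 1_{\{\norm{\sigma}\le M\}}\,ds$ is an integral of a selection of the integrably bounded (by the constant $M$), weakly-compact-convex-valued multifunction $t\mapsto\overline{\mathrm{co}}\bigl(\{0\}\cup(C(t)\cap\{\norm{\cdot}\le M\})\bigr)$, and the set of all integrals of integrable selections of such a multifunction is a weakly compact convex subset $W_\epsilon$ of $X$. Hence $\{\int_A\sigma:\sigma\in\Sigma\}$ is contained in $W_\epsilon+\epsilon B_X$ for every $\epsilon>0$; since a bounded set that lies within the $\epsilon$-neighbourhood of a weakly compact set for each $\epsilon$ is itself relatively weakly compact, the setwise condition follows, and with it the theorem.

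The step I expect to be the main obstacle is exactly this passage from pointwise to setwise weak compactness. The hypothesis provides relative weak compactness of $C(t)$ for each fixed $t$ but not uniformly in $t$, so the union $\bigcup_{t}C(t)$ need not be relatively weakly compact and one cannot simply enclose all the integrals in a single weakly compact set. The mechanism that repairs this is the interplay between uniform integrability, which confines the mass carried by the large values of $\norm{\sigma}$ to sets of small measure and hence lets the truncation error be made uniformly small, and convexity together with the Krein--\v{S}mulian theorem and the weak compactness of Aumann integrals of weakly-compact-convex-valued integrably bounded multifunctions, which handles the truncated part. Verifying the measurability of these multifunctions and confirming that the extracted weak limit is a genuine Bochner-integrable $X$-valued function — rather than merely a finitely additive vector measure — is the technical heart of the argument and the portion demanding the most care.
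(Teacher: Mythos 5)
The paper does not prove this statement at all --- it is imported, with a citation, as a known weak--compactness criterion (it is really Diestel's theorem rather than the classical Dunford--Pettis theorem) --- so your proposal has to be judged on its own merits. It fails, and it fails at the very first step: the ``classical vector-valued criterion'' on which the whole reduction rests, namely that a bounded, uniformly integrable $K\subset L^1([0,\nu],X)$ is relatively weakly compact as soon as $\left\{\int_A\sigma\,ds:\sigma\in K\right\}$ is relatively weakly compact in $X$ for every measurable $A$, is \emph{false} for general Banach spaces. That criterion is Dunford's theorem and is valid when both $X$ and $X^*$ have the Radon--Nikod\'ym property (Diestel--Uhl, \emph{Vector Measures}, Ch.~IV), whereas the statement to be proved assumes nothing about $X$. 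Concretely, take $X=\ell^1$, let $(r_n)$ be the Rademacher functions on $[0,1]$ and $(e_n)$ the unit vector basis, and put $f_n(t)=r_n(t)e_n$. Then $\norm{f_n(t)}_{\ell^1}=1$ for all $t$, so $\{f_n\}$ is bounded and uniformly integrable; for every measurable $A$ we have $\int_A f_n\,dt=\left(\int_A r_n\,dt\right)e_n\to 0$ in norm by the Riemann--Lebesgue lemma, so every set of partial integrals is relatively norm compact. Yet $\{f_n\}$ is not relatively weakly compact in $L^1([0,1],\ell^1)$: testing against the functionals $g\otimes e_m^*$ with $g\in L^\infty[0,1]$ forces any weak cluster point to be $0$, while the functional $\phi\in L^\infty_{w^*}([0,1],\ell^\infty)=(L^1([0,1],\ell^1))^*$ given by $\phi(t)=(r_n(t))_n$ satisfies $\langle\phi,f_n\rangle=\int_0^1 r_n(t)^2\,dt=1$ for every $n$; by Eberlein--\v{S}mulian no subsequence converges weakly. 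So the hypotheses of your criterion hold while its conclusion fails, and everything you build on it collapses.

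Note what this example does \emph{not} do: the pointwise sets $\{f_n(t):n\in\mathbb{N}\}=\{\pm e_n:n\in\mathbb{N}\}$ are not relatively weakly compact in $\ell^1$ (by the Schur property, weak and norm sequential convergence coincide there), so the theorem you were asked to prove is untouched by the example. That is exactly the diagnosis: the pointwise hypothesis $\sigma(t)\in C(t)$ with $C(t)$ relatively weakly compact is \emph{strictly stronger} than the setwise-integral condition, and your reduction throws away precisely the part of the hypothesis that carries the theorem's content. The auxiliary devices you list --- truncation via uniform integrability, Krein--\v{S}mulian convexification, Grothendieck's lemma that a bounded set contained in $W_\epsilon+\epsilon B_X$ for every $\epsilon>0$ is relatively weakly compact, weak compactness of Aumann integrals --- are individually sound, but they are deployed only to verify the insufficient setwise condition. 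A correct proof must exploit the pointwise containment itself; the proofs in the literature do this either through the Davis--Figiel--Johnson--Pe\l czy\'nski factorization applied to the weakly compact sets in which the values lie, or through the Diestel--Ruess--Schachermayer criterion, extracting convex combinations $g_n\in\mathrm{conv}\{f_k:k\ge n\}$ that converge almost everywhere, which is possible exactly because of the pointwise weak compactness. Alternatively, if you add the hypothesis that $X$ and $X^*$ have RNP so that your criterion becomes legitimate (e.g.\ $X$ reflexive, the only case the paper actually uses), then the theorem is immediate for a different reason: bounded plus uniformly integrable already suffices, and the multifunction $C$ plays no role.
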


\section{Hypotheses}
Throughout the article, we assume that $\frac{1}{2}<\alpha\le 1$ and consider the following hypotheses.\\
\begin{itemize}
	\item[(T)]  The operator $A$ generates a $C_0$ semigroup $T(t)$ which is compact for $t > 0$.
	\item[(C)] The linear fractional control system \eqref{A1} is approximately controllable in $[a,b]$.
	\item[(F1)] The multimap $(t,x)\mapsto F(t,x)$ is graph measurable.
	\item[(F2)] The multimap $F(t,x)$ has closed and convex values.
	\item[(F3)]  For each $t\in [a,b]$, the multimap $F(t,\cdot):X \multimap X$  is lower semicontinuous.
	\item[(F4)]  There exists a function $m \in L^{\frac{1}{r}}([a,b],\mathbb{R}^+), r\in (0,\alpha)$ such that
	\begin{equation*}
		I^{\alpha;\psi}_{a+} m \in C((a,b], \mathbb{R}^+ ), \lim_{t\to a+}(\Psi(t,a))^{1-\gamma}  I^{\alpha;\psi}_{a+} m(t)=0,
	\end{equation*} 
	with
	\begin{equation*}
		\norm{F(t,x)} \le m(t), ~\forall~ x \in X~ and~  t \in [a,b].
	\end{equation*}
\end{itemize}
\begin{remark}
	A graph measurable
	multifunction $F : [a, b]\times X\multimap X$ 
	has the property that if $q: [a,b]\to X$ is measurable, then $t\to F(t,q(t))$ is graph measurable. So, by Aumann's selection theorem \cite{wagner1977survey}, we can find a measurable function $f:[a, b]\to X$ such that $f(t)\in F(t,q(t))$ a.e. $t\in [a, b]$. Therefore, the measurable selection map $S_F$ defined by \eqref{1.8} is well defined.
\end{remark}
We assume that $U:[a,b]\times X\multimap Y$ satisfies
\begin{itemize}
	\item[(U1)] $(t,x)\mapsto U(t,x)$ is measuarble.
	\item[(U2)] The set $U(t,x)$ is convex in $Y$ for all $(t,x)\in [a,b]\times X$.
	\item[(U3)] $\text{For all}~ x\in X, ~\text{a.e.}~ t\in [a,b]$,
	\begin{equation*}
		\norm{U(t,x)}_{Y}\le a_U(t) +c_U(\Psi(t,a))^{1-\gamma}\norm{x}_{X}~~\text{with}~a_U\in L^2([a,b],\mathbb{R}^+), c_U\ge 0.
	\end{equation*}
	\item[(U4)] 	For all $x,y\in X$, a.e. $t\in [a,b]$,
	\begin{equation*}
		h(U(t,x), U(t,y))\le k_4(t)(\Psi(t,a))^{1-\gamma}\norm{x-y}_{X},
	\end{equation*}
	$k_4(\cdot)\in L^1([a,b],\mathbb{R}^+)$.
\end{itemize}
Also, we suppose that $h: [a,b]\times X\times Y\to \mathbb{R}$ satisfies the following conditions:
\begin{itemize}
	\item[(H1)] for all $(x,u)\in X\times Y$, the map $t\to h(t,x,u)$ is measurable;
	\item[(H2)] there exists $k_1, k_2\in L^1([a,b],\mathbb{R}^+)$ and $c_h\ge 0$ such that
	\begin{equation}
		\abs{h(t,x,u)}\le k_1(t)+k_2(t)\norm{x}_X+c_h\norm{u}_Y,
	\end{equation} 
	for all $(x,u)\in X\times Y$ and a.e. $t\in [a,b]$;
	\item[(H3)] for a.e. $t\in [a,b]$ and $u\in Y$, the function $x\mapsto h(t,x,u)$ is lower semicontinuous;
	\item[(H4)] for a.e. $t\in [a,b]$ and $x\in X$, the function $u\mapsto h(t,x,u)$ is lower semicontinuous and convex;
	\item[(H5)] for each bounded set $D\subset Y$, there exists a function $a_D\in L^2([a,b],\mathbb{R}^+)$ such that
	\begin{equation}
		\abs{h(t,x,u(t))-h(t,y,u(t))}\le a_D(t)\norm{x-y}_X,
	\end{equation}
	for all $x,y\in X$, $u\in D$ and a.e. $t\in [a,b]$. 
\end{itemize}
\begin{remark}
	One can choose $U:[a,b]\times L^2([0,1])\multimap L^2([0,1])$ given by
	\begin{equation}
		U(t,x)=\{(\Psi(t,a))^{1-\gamma}y: \norm{y-g(x)}\le \rho(x)\}, t\in [a,b], x\in X=L^2([0,1]),
	\end{equation}
	where $g: X\to X$ and $\rho:X\to \mathbb{R}$ satisfies $g(0)=0, \rho(0)=0$ and 
	\begin{equation}
		\norm{g(x)-g(y)}\le K\norm{x-y}, x,y\in X,
	\end{equation}
	and 
	\begin{equation}
		\abs{\rho(x)-\rho(y)}\le K\norm{x-y}, x,y\in X.
	\end{equation}
	We show $U$ satisfies hypotheses (U1)-(U4). Clearly, Hypothesis (U1) and (U2) are satisfied. For (U3), we consider $u\in U(t,x)$. Then
	\begin{equation}
		\norm{u}=\norm{(\Psi(t,a))^{1-\gamma}y}=(\Psi(t,a))^{1-\gamma}\norm{y}.
	\end{equation}
	Using the condition $g(0)=0$, we obtain
	\begin{align*}
		\norm{u}\le (\Psi(t,a))^{1-\gamma}[\norm{y-g(x)}+\norm{g(x)}\le 2(\Psi(t,a))^{1-\gamma}\norm{x},
	\end{align*}
	which implies that Hypothesis (U3) is satisfied.
	
	We now verify Hypothesis (U4). Take $x,y\in X$ and $u\in U(t,x)$. Then $u=(\Psi(t,a))^{1-\gamma}u_0$, where $\norm{u_0-g(x)}\le \rho(x)$. Let
	\begin{equation}
		v_0=g(y)+\frac{\rho(y)}{\rho(x)}(u_0-g(x)).
	\end{equation}
	Then $v=(\Psi(t,a))^{1-\gamma}v_0\in U(t,y)$ and
	\begin{align*}
		\norm{u_0-v_0}=&\norm{u_0-g(y)-\frac{\rho(y)}{\rho(x)}(u_0-g(x))}\\
		=&\norm{g(x)+u_0-g(x)-g(y)-\frac{\rho(y)}{\rho(x)}(u_0-g(x))}\\
		=&\norm{g(x)-g(y)+\left[1-\frac{\rho(y)}{\rho(x)}(u_0-g(x))\right]}
		\le 2L\norm{x-y}.
	\end{align*}
	Hence
	\begin{equation}
		\norm{u-v}\le2L (\Psi(t,a))^{1-\gamma}\norm{x-y}, x,y\in X.
	\end{equation}
	Repeating the above process, we obtain
	\begin{equation}
		d_H(U(t,x),U(t,y))\le2L(\Psi(t,a))^{1-\gamma}\norm{x-y}, x,y\in X. 
	\end{equation}
\end{remark}
\section{Optimal Control}
In this section, we study the linear fractional control problem
\begin{equation}\label{4.1}
	\begin{cases}
		^{H}D^{\alpha,\beta;\psi}_{a+} q(t)=Aq(t)+Bu(t), t\in [a,b]\\
		(I_{a+}^{1-\gamma;\psi}q)(a)=x_0,
	\end{cases}
\end{equation}
with the control constraint
\begin{equation}\label{4.2}
	u(t)\in U(t,q(t))~\text{for a.a.}~t\in [a,b].
\end{equation}
Here $A$ is a closed linear operator generating a strongly continuous semigroup $\{T(t)\}$ of bounded linear operators defined on a Banach space $X$. $^{H}D^{\alpha,\beta;\psi}_{a+}$ is the left $\psi$-Hilfer fractional derivative, $U:[a,b]\times X\multimap Y$,  $B: Y\to X$ is a bounded linear map that describes the control action, $Y$ being a separable Hilbert space. 

The notion of mild solution of \eqref{4.1}-\eqref{4.2} is given by the following definition:
\begin{definition}
	We say that $(q(\cdot), u(\cdot))\in C^{1-\gamma;\psi}([a,b],X)\times L^2([a,b],Y)$ is a mild solution of the given fractional control problem \eqref{4.1}-\eqref{4.2} if it satisfies the following integral equation
	\begin{equation}\label{4.3}
		q(t)= S_{\alpha, \beta}(\Psi(t,a))x_0 + \int_{a}^{t}  K_{\alpha}(\Psi(t,s)) Bu(s) \psi'(s)ds,
	\end{equation}
	with 
	\begin{equation}
		u(t)\in U(t,q(t))~\text{a.a.}~t\in [a,b],
	\end{equation}
	where the operators $S_{\alpha, \beta}(t) : X\to X ~ and~  K_{\alpha}(t): X \to X $  are defined by 
	\begin{equation}
		S_{\alpha, \beta}(t)= I^{\gamma-\alpha;\psi}_{a+}K_{\alpha}(t),
		K_{\alpha}(t)= t^{\alpha-1} P_{\alpha}(t)~~
		\text{where}~~
		P_{\alpha}(t)= \int_{0}^{\infty} \alpha \theta  M_{\alpha}(\theta)  T(t^{\alpha}\theta) d\theta.
	\end{equation}
\end{definition}
The existence of mild solution of \eqref{4.1}-\eqref{4.2} is given in \cite{bouacida2023controllability}.
Let $R_U(x_0)$ be the set of all solutions of \eqref{4.1}-\eqref{4.2}.

In this section, we are interested in the following optimal control problem\\
\textbf{Problem 1} Find $(q^*(\cdot),u^*(\cdot))\in R_U(x_0)$ such that
\begin{equation}
	J(q^*(\cdot),u^*(\cdot))=\inf_{(q(\cdot),u(\cdot))\in R_U(x_0)} J(q(\cdot),u(\cdot)),
\end{equation}
where $J: C^{1-\gamma;\psi}([a,b],X)\times L^2([a,b],Y)\to \mathbb{R}$ is defined by
\begin{equation}
	J(q(\cdot), u(\cdot))=\int_{a}^{b}h(t,q(t),u(t))dt, \forall (q(\cdot),u(\cdot))\in C^{1-\gamma;\psi}([a,b],X)\times L^2([a,b],Y).
\end{equation}
Let us provide some important properties of the operators $S_{\alpha,\beta}(t)$ and $K_{\alpha}(t)$.
\begin{proposition}\label{Prop4.3}  \cite{gu2015existence}
	Under hypothesis (T), for any $t>0$, the operators  $S_{\alpha, \beta}(t)$ and $K_{\alpha}(t)$ are  linear operators and for any $x \in X$ 
	\begin{equation}
		\norm{ S_{\alpha, \beta}(t)x} \le  t^{\gamma-1} \frac{M}{\Gamma(\gamma)} \norm{x},
	\end{equation}
	and
	\begin{equation}
		\norm{K_{\alpha}(t)x}  \le t^{\alpha-1}\frac{M}{\Gamma(\alpha)} \norm{x}.
	\end{equation}
\end{proposition}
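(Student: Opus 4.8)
The plan is to derive both estimates from two standard ingredients: the uniform semigroup bound supplied (as is customary in this setting) by hypothesis (T), namely $\sup_{t\ge 0}\norm{T(t)}\le M$ for some $M\ge 1$, and the moment identity for the Mainardi--Wright function $M_\alpha$, which is nonnegative on $[0,\infty)$ and satisfies $\int_0^\infty \theta^\delta M_\alpha(\theta)\,d\theta=\frac{\Gamma(1+\delta)}{\Gamma(1+\alpha\delta)}$ for every $\delta>-1$. Linearity of $P_\alpha(t)$, $K_\alpha(t)$ and $S_{\alpha,\beta}(t)$ is immediate, since each is assembled from Bochner integrals and scalar multiples of the linear operators $T(s)$. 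I would therefore work operator by operator, first estimating $P_\alpha$, then reading off $K_\alpha$, and finally handling $S_{\alpha,\beta}$.

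First I would estimate $P_\alpha(t)$. For $x\in X$, the triangle inequality for Bochner integrals together with the uniform semigroup bound gives
\begin{equation*}
\norm{P_\alpha(t)x}\le \int_0^\infty \alpha\theta M_\alpha(\theta)\,\norm{T(t^\alpha\theta)x}\,d\theta\le \alpha M\norm{x}\int_0^\infty \theta M_\alpha(\theta)\,d\theta.
\end{equation*}
Applying the moment identity with $\delta=1$ shows the last integral equals $\frac{1}{\Gamma(1+\alpha)}=\frac{1}{\alpha\Gamma(\alpha)}$, so that $\norm{P_\alpha(t)x}\le \frac{M}{\Gamma(\alpha)}\norm{x}$; note that this same computation certifies convergence of the defining Bochner integral. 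Since $K_\alpha(t)=t^{\alpha-1}P_\alpha(t)$, the second bound $\norm{K_\alpha(t)x}\le t^{\alpha-1}\frac{M}{\Gamma(\alpha)}\norm{x}$ follows immediately.

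Next I would treat $S_{\alpha,\beta}$. Using $\gamma-\alpha=\beta(1-\alpha)>0$ and unfolding $S_{\alpha,\beta}(t)=I^{\gamma-\alpha;\psi}_{a+}K_\alpha(t)$ as a Riemann--Liouville convolution of the map $s\mapsto K_\alpha(s)=s^{\alpha-1}P_\alpha(s)$, the bound on $P_\alpha$ yields
\begin{equation*}
\norm{S_{\alpha,\beta}(t)x}\le \frac{1}{\Gamma(\gamma-\alpha)}\int_0^t (t-s)^{\gamma-\alpha-1}s^{\alpha-1}\norm{P_\alpha(s)x}\,ds\le \frac{M\norm{x}}{\Gamma(\gamma-\alpha)\Gamma(\alpha)}\int_0^t (t-s)^{\gamma-\alpha-1}s^{\alpha-1}\,ds.
\end{equation*}
The remaining integral is a Beta integral equal to $t^{\gamma-1}\frac{\Gamma(\gamma-\alpha)\Gamma(\alpha)}{\Gamma(\gamma)}$; substituting cancels the Gamma prefactors and leaves precisely $\norm{S_{\alpha,\beta}(t)x}\le t^{\gamma-1}\frac{M}{\Gamma(\gamma)}\norm{x}$, as claimed.

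The computations are essentially routine, so the obstacles here are bookkeeping rather than depth. The key analytic input is the moment identity for $M_\alpha$, whose derivation I would quote from the theory of the Wright function rather than reproduce, since it is what converts the $\delta=1$ moment into the clean factor $1/\Gamma(\alpha)$. The one genuine subtlety is reading $S_{\alpha,\beta}$ correctly as a fractional integral with lower endpoint, so that the Beta evaluation produces exactly $\Gamma(\gamma)$ in the denominator and matches the stated exponent $t^{\gamma-1}$. Beyond this, one need only justify passing the norm inside the Bochner integrals and observe that the \emph{uniform} boundedness of $T$ (not merely boundedness on compacts) is what makes the improper $\theta$-integral finite.
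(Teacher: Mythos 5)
Your proof is correct and is essentially the standard argument: the paper states this proposition without proof, importing it directly from \cite{gu2015existence}, and your computation (nonnegativity plus the moment identity $\int_0^\infty \theta M_\alpha(\theta)\,d\theta = 1/\Gamma(1+\alpha)$ for $P_\alpha$, then the Beta integral applied to the fractional-integral representation of $S_{\alpha,\beta}$, after observing that the $\psi$-fractional integral reduces to an ordinary Riemann--Liouville convolution in the variable $\psi(t)-\psi(a)$) is precisely the proof given in that reference. One minor caveat: when $\beta=0$ or $\alpha=1$ one has $\gamma-\alpha=\beta(1-\alpha)=0$, so your assertion $\gamma-\alpha>0$ fails and the Beta-integral step degenerates; but in that case the fractional integral of order zero is the identity, hence $S_{\alpha,\beta}=K_\alpha$ and $\Gamma(\gamma)=\Gamma(\alpha)$, so the stated bound holds trivially and your argument covers all remaining cases.
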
	
\begin{proposition}\label{Prop4.4} \cite{gu2015existence}
	Under hypothesis (T), for any $t>0$, the operators  $K_{\alpha}(t)$  and  $S_{\alpha, \beta}(t)$ are strongly continuous, that means for any $x \in X$ and $0 <t_1<t_2\le b$ we have
	\begin{equation*}
		\norm{K_{\alpha}(t_1)x-K_{\alpha}(t_2)x} \to 0 ~as ~t_2 \to t_1,
	\end{equation*}
	and
	\begin{equation*}
		\norm{S_{\alpha,\beta}(t_1)x-S_{\alpha, \beta}(t_2)x} \to 0 ~as~ t_2 \to t_1.
	\end{equation*}
	
\end{proposition}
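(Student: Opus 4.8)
The plan is to reduce everything to the strong continuity of the operator family $P_\alpha(t)=\int_0^\infty \alpha\theta M_\alpha(\theta)T(t^\alpha\theta)\,d\theta$ and then propagate that property through the algebraic definitions of $K_\alpha$ and $S_{\alpha,\beta}$.

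First I would prove that $P_\alpha$ is strongly continuous on $(0,b]$. Fix $x\in X$ and $0<t_1<t_2\le b$, and write the difference under a single integral:
$$\norm{P_\alpha(t_2)x-P_\alpha(t_1)x}\le \int_0^\infty \alpha\theta M_\alpha(\theta)\,\norm{T(t_2^\alpha\theta)x-T(t_1^\alpha\theta)x}\,d\theta.$$
For each fixed $\theta>0$, since $t_1>0$ the arguments $t_i^\alpha\theta$ stay bounded away from $0$, so strong continuity of the $C_0$-semigroup gives $\norm{T(t_2^\alpha\theta)x-T(t_1^\alpha\theta)x}\to 0$ as $t_2\to t_1$. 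Using the semigroup bound $\norm{T(t)}\le M$, the integrand is dominated by $2M\norm{x}\,\alpha\theta M_\alpha(\theta)$, which is integrable because $\int_0^\infty \theta M_\alpha(\theta)\,d\theta=1/\Gamma(\alpha+1)<\infty$. The dominated convergence theorem then forces the right-hand side to $0$, establishing strong continuity of $P_\alpha$. Strong continuity of $K_\alpha(t)=t^{\alpha-1}P_\alpha(t)$ follows at once by splitting
$$K_\alpha(t_2)x-K_\alpha(t_1)x=t_2^{\alpha-1}\bigl(P_\alpha(t_2)x-P_\alpha(t_1)x\bigr)+\bigl(t_2^{\alpha-1}-t_1^{\alpha-1}\bigr)P_\alpha(t_1)x;$$
since $t_1>0$, the scalar $t\mapsto t^{\alpha-1}$ is continuous and locally bounded near $t_1$, so the first term vanishes by the previous step and the second by continuity of the power function together with finiteness of $\norm{P_\alpha(t_1)x}$.

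Next I would handle $S_{\alpha,\beta}(t)=I^{\gamma-\alpha;\psi}_{a+}K_\alpha(t)$, where $\gamma-\alpha=\beta(1-\alpha)\in[0,1)$. When $\beta=0$ this operator is the identity and the claim reduces to the previous case, so assume $\beta(1-\alpha)>0$. Writing the $\psi$-fractional integral explicitly, $S_{\alpha,\beta}(t)x$ becomes an integral of $(\Psi(t,s))^{\gamma-\alpha-1}K_\alpha(\cdot)x$ against $\psi'(s)\,ds$; the kernel exponent $\gamma-\alpha-1\in[-1,-\tfrac12)$ and the factor $K_\alpha\sim(\cdot)^{\alpha-1}$ are each singular, but their product is integrable by a Beta-function estimate, where the standing assumption $\alpha>\tfrac12$ guarantees the relevant exponents exceed $-1$. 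After a change of variable that freezes the moving upper-limit singularity onto a fixed interval, the bound $\norm{K_\alpha(t)x}\le\tfrac{M}{\Gamma(\alpha)}t^{\alpha-1}\norm{x}$ from Proposition \ref{Prop4.3} supplies an integrable majorant, and the already-established pointwise convergence of $K_\alpha$ gives convergence of the integrand; dominated convergence then yields $\norm{S_{\alpha,\beta}(t_2)x-S_{\alpha,\beta}(t_1)x}\to 0$.

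The main obstacle will be this last step: the fractional-integral representation of $S_{\alpha,\beta}$ carries a singularity at the upper limit $s=t$ that moves with $t$, so dominated convergence cannot be applied naively on the fixed interval $[a,t]$. The key device is the normalizing change of variables that relocates this singularity to a fixed point, after which the uniform majorant from Proposition \ref{Prop4.3} and the integrability secured by $\alpha>\tfrac12$ make the passage to the limit routine.
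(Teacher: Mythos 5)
Your proof is correct. The paper offers no proof of this proposition at all (it is quoted from \cite{gu2015existence}), and your argument---dominated convergence over the Wright-function integral for $P_{\alpha}$ using the moment bound $\int_0^\infty \theta M_{\alpha}(\theta)\,d\theta<\infty$, the algebraic splitting for $K_{\alpha}(t)=t^{\alpha-1}P_{\alpha}(t)$, and the rescaling of the fractional-integral representation of $S_{\alpha,\beta}$ onto a fixed interval so that a Beta-function majorant permits dominated convergence---is essentially the standard proof given in that cited reference.
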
	
\begin{proposition}\cite{gu2015existence}
	Under hypothesis (T), the operator $P_{\alpha}(t)$ is compact for $t>0$.
\end{proposition}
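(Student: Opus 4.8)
The plan is to exploit hypothesis (T) — that $T(t)$ is compact for every $t>0$ — together with the observation that the only obstruction to compactness of $P_\alpha(t)$ sits at $\theta=0$, where the integrand $T(t^\alpha\theta)$ degenerates to the (generally non-compact) identity $T(0)=I$. The strategy is therefore to truncate the defining integral away from the origin, show that each truncation is compact, and then recover $P_\alpha(t)$ as an operator-norm limit of these compact operators. Throughout I use the uniform bound $\norm{T(s)}\le M$ (as already used in Proposition \ref{Prop4.3}) and the standard first moment $\int_0^\infty \alpha\theta M_\alpha(\theta)\,d\theta=\tfrac{1}{\Gamma(\alpha)}$, together with the integrability of $\theta\mapsto\theta M_\alpha(\theta)$ on $(0,\infty)$.

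First I would fix $t>0$ and, for $\delta\in(0,1)$, introduce the truncated operator
\[
P_\alpha^\delta(t)=\int_\delta^\infty \alpha\theta\, M_\alpha(\theta)\, T(t^\alpha\theta)\,d\theta .
\]
Using the semigroup law $T(t^\alpha\theta)=T(t^\alpha\delta)\,T\big(t^\alpha(\theta-\delta)\big)$ valid for $\theta\ge\delta$, and pulling the $\theta$-independent bounded operator $T(t^\alpha\delta)$ out of the (strong) integral, I would factor
\[
P_\alpha^\delta(t)=T(t^\alpha\delta)\,Q_\delta,\qquad Q_\delta:=\int_\delta^\infty \alpha\theta\, M_\alpha(\theta)\, T\big(t^\alpha(\theta-\delta)\big)\,d\theta .
\]
Here $Q_\delta$ is a bounded linear operator, since $\norm{Q_\delta}\le M\int_0^\infty \alpha\theta\, M_\alpha(\theta)\,d\theta=\tfrac{M}{\Gamma(\alpha)}<\infty$, while $T(t^\alpha\delta)$ is compact by hypothesis (T) because $t^\alpha\delta>0$. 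The composition of a compact operator with a bounded one is compact, so each $P_\alpha^\delta(t)$ is compact.

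Next I would control the discarded tail near the origin. Using $\norm{T(t^\alpha\theta)}\le M$,
\[
\norm{P_\alpha(t)-P_\alpha^\delta(t)}=\norm{\int_0^\delta \alpha\theta\, M_\alpha(\theta)\, T(t^\alpha\theta)\,d\theta}\le M\int_0^\delta \alpha\theta\, M_\alpha(\theta)\,d\theta ,
\]
and since $\theta\mapsto\theta M_\alpha(\theta)$ is integrable on $(0,\infty)$, the absolute continuity of the integral forces the right-hand side to tend to $0$ as $\delta\to0^+$. Hence $P_\alpha^\delta(t)\to P_\alpha(t)$ in the operator norm. Because the space of compact linear operators on $X$ is closed, in the operator-norm topology, within the space of bounded linear operators on $X$, the limit $P_\alpha(t)$ is itself compact, which proves the claim.

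The only genuinely delicate point is the factorization step: all of the gain in compactness comes from isolating the identity-operator singularity at $\theta=0$, and it is precisely the semigroup factorization $P_\alpha^\delta(t)=T(t^\alpha\delta)\,Q_\delta$ that converts compactness of $T$ at a single positive time $t^\alpha\delta$ into compactness of the entire truncated integral. Once this is in place, the remaining steps are routine consequences of the uniform bound $\norm{T(\cdot)}\le M$ and the integrability of $\theta M_\alpha(\theta)$.
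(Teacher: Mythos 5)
Your proof is correct: the truncation at $\theta=\delta$, the semigroup factorization $P_\alpha^\delta(t)=T(t^\alpha\delta)\,Q_\delta$, and the operator-norm tail estimate via the integrability of $\theta M_\alpha(\theta)$ are all valid, and this is precisely the standard truncation argument used in the source \cite{gu2015existence} that the paper cites for this proposition (the paper itself gives no proof). The only cosmetic difference is that the cited literature phrases the conclusion as ``there are relatively compact sets arbitrarily close to $P_\alpha(t)B_r$'' rather than as an operator-norm limit of compact operators; the two formulations are equivalent.
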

We now give a technical lemma.
\begin{lemma}\label{tech}
	The function $s\mapsto \psi^{\prime}(s)(\Psi(t,s)^{\alpha-1}$ belongs to $L^2([a,t],\mathbb{R})$, for all $t\in (a,b]$ provided $\frac{1}{2}<\alpha\le 1$.
\end{lemma}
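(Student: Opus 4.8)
The plan is to compute the squared $L^2$-norm directly and to isolate the only singularity, which sits at $s=t$. Writing out the norm, the quantity to be controlled is
\[
\int_a^t \big(\psi'(s)\big)^2 \big(\Psi(t,s)\big)^{2(\alpha-1)}\, ds,
\]
and the integrand behaves like $(\psi(t)-\psi(s))^{2\alpha-2}$ as $s\to t^-$. The restriction $\alpha>\tfrac12$ is precisely what forces the exponent $2\alpha-2$ to exceed $-1$, i.e.\ to be integrable at that endpoint; this is the only place where the hypothesis is used.

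First I would invoke the regularity of $\psi$. Since $\psi$ is continuously differentiable on the compact interval $[a,b]$, its derivative is bounded there; set $M_\psi:=\sup_{s\in[a,b]}\psi'(s)<\infty$. Because $\psi$ is increasing we also have $\psi'\ge 0$ and $\Psi(t,s)=\psi(t)-\psi(s)>0$ for $a\le s<t$. Factoring out one copy of $\psi'(s)$ and bounding the other by $M_\psi$ yields
\[
\int_a^t \big(\psi'(s)\big)^2 \big(\Psi(t,s)\big)^{2\alpha-2}\, ds
\le M_\psi \int_a^t \psi'(s)\,\big(\Psi(t,s)\big)^{2\alpha-2}\, ds.
\]

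The remaining integral is handled by the substitution $v=\Psi(t,s)=\psi(t)-\psi(s)$, so that $dv=-\psi'(s)\,ds$, with $v$ running from $\Psi(t,a)$ down to $0$ as $s$ runs from $a$ up to $t$. This converts the integral into the power integral $\int_0^{\Psi(t,a)} v^{2\alpha-2}\,dv$, which converges exactly when $2\alpha-2>-1$ and then equals $(\Psi(t,a))^{2\alpha-1}/(2\alpha-1)$. Hence the squared norm is bounded by $M_\psi\,(\Psi(t,a))^{2\alpha-1}/(2\alpha-1)<\infty$ for every $t\in(a,b]$, which establishes the claim.

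I do not expect a serious obstacle here: the only point requiring care is the behaviour of the integrand at $s=t$, and the entire content of the lemma is the bookkeeping that matches $\alpha>\tfrac12$ with integrability of $v^{2\alpha-2}$ at the origin. The constant $M_\psi$ merely absorbs the non-constancy of $\psi'$; if one prefers to avoid it, applying the substitution directly to $(\psi'(s))^2(\Psi(t,s))^{2\alpha-2}$ produces $\int_{\psi(a)}^{\psi(t)} \psi'\big(\psi^{-1}(u)\big)\,(\psi(t)-u)^{2\alpha-2}\,du$, to which the same boundedness of $\psi'$ and the same power-integral estimate apply.
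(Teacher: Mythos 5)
Your proof is correct and is essentially the same argument the paper relies on: the paper states Lemma \ref{tech} without a standalone proof, but the identical computation---bounding one factor of $\psi'$ by $K=\sup_{s\in[a,b]}\psi'(s)$ and evaluating the remaining integral $\int_a^t\psi'(s)(\Psi(t,s))^{2(\alpha-1)}\,ds=(\Psi(t,a))^{2\alpha-1}/(2\alpha-1)$ by the substitution $v=\Psi(t,s)$, which converges precisely because $2\alpha-1>0$---is carried out inline in the proof of Lemma \ref{Lem 5.7}, leading to \eqref{4.24}. Your treatment of the singularity at $s=t$ and of the endpoint case $\alpha=1$ is sound, so nothing is missing.
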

\begin{lemma}\label{Lem 5.8}
	Suppose that the operator $P_{\alpha}(t)$ is compact for $t>0$. Let the operator $\Lambda: L^2([a,b],X)\to C^{1-\gamma;\psi}([a,b],X)$ be defined as
	\begin{equation}
		(\Lambda f)(t)=\int_{a}^{t}\psi^{\prime}(s)K_{\alpha}(\Psi(t,s))f(s)ds, t\in (a,b].
	\end{equation}
	Then the operator $\Lambda$ is compact.
\end{lemma}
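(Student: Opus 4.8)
The plan is to prove compactness via the Arzel\`a--Ascoli theorem, adapted to the weighted space $C^{1-\gamma;\psi}([a,b],X)$. Writing $g_f(t) := (\Psi(t,a))^{1-\gamma}(\Lambda f)(t)$, the norm on $C^{1-\gamma;\psi}$ is exactly the sup-norm of $g_f$ on $[a,b]$, so a family $\{\Lambda f\}$ is relatively compact in $C^{1-\gamma;\psi}$ if and only if $\{g_f\}$ is relatively compact in $C([a,b],X)$. By Arzel\`a--Ascoli this reduces to three checks performed over $f$ ranging in the unit ball $B$ of $L^2([a,b],X)$: (i) uniform boundedness of $\{g_f : f\in B\}$; (ii) equicontinuity of $\{g_f\}$ on all of $[a,b]$; and (iii) for each fixed $t$, relative compactness of $\{g_f(t): f\in B\}$ in $X$. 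The key analytic tools are the growth bound $\norm{K_\alpha(\tau)x}\le \tau^{\alpha-1}\tfrac{M}{\Gamma(\alpha)}\norm{x}$ from Proposition \ref{Prop4.3}, the strong continuity from Proposition \ref{Prop4.4}, the compactness of $P_\alpha(\tau)$ for $\tau>0$, and Lemma \ref{tech}, which guarantees $s\mapsto \psi'(s)(\Psi(t,s))^{\alpha-1}\in L^2([a,t],\mathbb{R})$ so that the singular kernel pairs with $f\in L^2$ through H\"older's inequality.

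For (i), I would estimate $\norm{g_f(t)}$ by pulling $(\Psi(t,a))^{1-\gamma}$ inside, bounding $K_\alpha(\Psi(t,s))$ via Proposition \ref{Prop4.3}, and applying H\"older together with Lemma \ref{tech}; the $L^2$-norm of the kernel is bounded uniformly on $[a,b]$ (with the weight absorbing the degeneracy at $t=a$), giving a bound proportional to $\norm{f}_{L^2}\le 1$. For (ii), fix $a\le t_1<t_2\le b$ and split $g_f(t_2)-g_f(t_1)$ into the difference coming from the weight $(\Psi(t_2,a))^{1-\gamma}-(\Psi(t_1,a))^{1-\gamma}$, the difference coming from the varying upper limit $\int_{t_1}^{t_2}$, and the difference $\int_a^{t_1}$ of the integrands with $K_\alpha(\Psi(t_2,s))-K_\alpha(\Psi(t_1,s))$. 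The first two are controlled by continuity of the weight and by H\"older applied to the kernel over the shrinking interval; the third uses the strong continuity of $K_\alpha$ from Proposition \ref{Prop4.4} together with a dominated-convergence argument, again leaning on Lemma \ref{tech} for an integrable dominating kernel. Special care is needed near $t_1=a$, where I would use the weight and the uniform integrability of the kernel to make the increment small independently of $f$.

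For (iii), the compactness of $P_\alpha$ must be exploited. For fixed $t>a$ and $\eta\in(0,t-a)$, I would introduce the truncated operator $(\Lambda_\eta f)(t)=\int_a^{t-\eta}\psi'(s)(\Psi(t,s))^{\alpha-1}P_\alpha(\Psi(t,s))f(s)\,ds$, for which $\Psi(t,s)\ge \Psi(t,t-\eta)>0$ on the integration range. On this range a factorization of the semigroup, $T((\Psi(t,s))^\alpha\theta)=T(\delta)\,T((\Psi(t,s))^\alpha\theta-\delta)$ for a fixed small $\delta>0$ and $\theta$ bounded below, lets me write the truncated integral as $T(\delta)$ applied to a bounded subset of $X$, whose image is relatively compact since $T(\delta)$ is compact; hence $\{(\Psi(t,a))^{1-\gamma}(\Lambda_\eta f)(t):f\in B\}$ is relatively compact in $X$. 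Then H\"older and Lemma \ref{tech} show the tail $\int_{t-\eta}^{t}$ tends to $0$ uniformly in $f\in B$ as $\eta\to0^+$, so $\Lambda_\eta(B)(t)$ approximates $\Lambda(B)(t)$ uniformly and the latter is totally bounded, hence relatively compact.

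I expect the main obstacle to be step (iii): correctly extracting a compact operator from $P_\alpha(\Psi(t,s))$ despite its dependence on $s$ and the singularity as $s\to t$. The semigroup-factorization/cutoff device is the crucial point, and it must be arranged so that the compact factor $T(\delta)$ is uniform in $s$ over $[a,t-\eta]$ while the remaining factor stays uniformly bounded; handling the $\theta$-integral (introducing a cutoff $\theta\ge\epsilon$ and controlling $\int_0^\epsilon$ via $\int_0^\infty\theta M_\alpha(\theta)\,d\theta=\tfrac{1}{\Gamma(\alpha+1)}$) is the technical heart. Once (i)--(iii) are in hand, Arzel\`a--Ascoli yields relative compactness of $\{g_f:f\in B\}$ in $C([a,b],X)$, hence of $\Lambda(B)$ in $C^{1-\gamma;\psi}([a,b],X)$, proving $\Lambda$ is compact.
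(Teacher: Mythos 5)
Your overall architecture --- reduction to the weighted sup-norm, Arzel\`a--Ascoli via equicontinuity plus pointwise relative compactness, and the double truncation (in $s$ and in $\theta$) with the semigroup factorization for step (iii) --- is the same as the paper's, and your step (iii) is essentially identical to its argument. The genuine gap is in step (ii). Equicontinuity must hold \emph{uniformly over} $f$ in the ball $B$, and for the kernel-difference term
\begin{equation*}
	(\Psi(t_1,a))^{1-\gamma}\int_a^{t_1}\psi'(s)\left[K_\alpha(\Psi(t_2,s))-K_\alpha(\Psi(t_1,s))\right]f(s)\,ds
\end{equation*}
strong continuity of $K_\alpha$ (Proposition \ref{Prop4.4}) plus dominated convergence only gives convergence to zero for each \emph{fixed} $f$: strong continuity controls $\norm{[K_\alpha(\Psi(t_2,s))-K_\alpha(\Psi(t_1,s))]x}$ for a single vector $x$, whereas uniformity over the infinite-dimensional ball $B$ requires smallness of \emph{operator norms}. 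Indeed, after H\"older the natural sufficient condition is
\begin{equation*}
	\int_a^{t_1}\norm{\psi'(s)\left[(\Psi(t_2,s))^{\alpha-1}P_\alpha(\Psi(t_2,s))-(\Psi(t_1,s))^{\alpha-1}P_\alpha(\Psi(t_1,s))\right]}_{B(X)}^2\,ds\longrightarrow 0,
\end{equation*}
and a strongly (but not norm) continuous operator family need not satisfy this; dominated convergence cannot repair it, because the quantity that must tend to zero pointwise in $s$ is an operator norm, not the value on a fixed vector.

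The missing ingredient --- and the way the paper actually argues --- is to use the hypothesis that $P_\alpha(\tau)$ is compact for $\tau>0$, which upgrades strong continuity to continuity in the uniform operator topology on compact subintervals of $(0,\infty)$. Since this norm continuity is unavailable at $\tau=0$, i.e.\ as $s\to t_1$, one must also cut off the singular region: the paper splits the integral at $t_1-\epsilon$, uses $\sup_{s\in[a,t_1-\epsilon]}\norm{P_\alpha(\Psi(t_2,s))-P_\alpha(\Psi(t_1,s))}_{B(X)}\to 0$ on the good region (its term $J_1$), and controls the two leftover pieces near the singularity by the H\"older/Lemma \ref{tech} estimates you already have (its terms $J_2$ and $J_3$). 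In short, your proposal reserves the compactness of $P_\alpha$ exclusively for the pointwise compactness step (iii); in fact it is equally indispensable in the equicontinuity step (ii), and as written your step (ii) does not go through.
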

\begin{proof}
	Let us first consider the ball
	\begin{equation}
		B_R=\{f\in L^2([a,b],X): \norm{f}_{L^2([a,b],X)}\le R\}.
	\end{equation}
	We will show that the set $\{\Lambda f: f\in B_R\}$ is relatively compact in $C^{1-\gamma;\psi}([a,b],X)$, that is, the set $\{(\psi(\cdot)-\psi(a))^{1-\gamma}\Lambda f(\cdot): f\in B_R\}\subset C([a,b],X)$ is relatively compact. First we show the set $\{(\psi(\cdot)-\psi(a))^{1-\gamma}\Lambda f(\cdot): f\in B_R\}\subset C([a,b],X)$ is equicontinuous.

	We define $\Xi:L^2([a,b],X)\to C([a,b],X)$
	\begin{equation}
		(\Xi f)(t)=(\Psi(t,a))^{1-\gamma}\int_{a}^{t}\psi^{\prime}(s)K_{\alpha}(\Psi(t,s))f(s)ds, t\in [a,b].
	\end{equation}
	We show the set $\Xi(B_R)\subset C([a,b],X)$ is equicontinuous.
	
	For $t_1=a$ and $a<t_2\le b$ we have
	\begin{align*}
		\norm{((\Xi f)(a)-(\Xi f)(t_2)}=&\norm{ (\Psi(t_2,a))^{1-\gamma}\int_{a}^{t_2}  K_{\alpha}(\Psi(t_2,s))  f(s)\psi'(s) ds}\\
		\le & (\Psi(t_2,a))^{1-\gamma} \int_{a}^{t_2} \norm{K_{\alpha}(\Psi(t_2,s))  f(s)}\psi'(s) ds.
	\end{align*}
	Using Proposistion \ref{Prop4.3} we get from above
	\begin{align*}
		\norm{((\Xi f )(a)-(\Xi f )(t_2))}
		\le & (\Psi(t_2,a))^{1-\gamma}\int_{a}^{t_2} (\Psi(t_2,s))^{\alpha-1}\frac{M}{\Gamma(\alpha)}\norm{f(s)}\psi'(s) ds\\
		\le &  \frac{M}{\Gamma(\alpha)} (\Psi(t_2,a))^{1-\gamma}\int_{a}^{t_2}\psi'(s) (\Psi(t_2,s))^{\alpha-1} \norm{f(s)} ds.
	\end{align*}
	By using H$\ddot{o}$lder's inequality we obtain
	\begin{align*}
		&\norm{((\Xi f )(a)-(\Xi f )(t_2))}\\
		\le & \frac{M}{\Gamma(\alpha)} (\Psi(t_2,a))^{1-\gamma}\norm{f}_{L^2([a,b],X)}\left(\int_{a}^{t_2}[\psi'(s) (\Psi(t_2,s))^{\alpha-1}]^2ds\right)^{\frac{1}{2}}\\
		\le &\frac{M}{\Gamma(\alpha)} (\Psi(t_2,a))^{1-\gamma}R\left(\int_{a}^{t_2}[\psi'(s) (\Psi(t_2,s))^{\alpha-1}]^2ds\right)^{\frac{1}{2}}.
	\end{align*}
	By means of Lemma \ref{tech}, we conclude that $	\norm{((\Xi f )(a)-(\Xi f )(t_2))}\to 0$ as $t_2\to a$.\\
	For  $a<t_1< t_2 \le b$ we have
	\begin{align*}
		&\norm{(\Xi f )(t_2)-(\Xi f )(t_1)}\\
		&=\norm{\int_{a}^{t_2}(\Psi(t_2,a))^{1-\gamma} (\Psi(t_2,s))^{\alpha-1} P_{\alpha}(\Psi(t_2,s))  f(s)\psi'(s) ds\right.\\
			&\left.\hspace{3cm}-\int_{a}^{t_1}(\Psi(t_1,a))^{1-\gamma} (\Psi(t_1,s))^{\alpha-1} P_{\alpha}(\Psi(t_1,s))  f(s)\psi'(s) ds}\\
		&=\norm{\int_{a}^{t_1}(\Psi(t_2,a))^{1-\gamma} (\Psi(t_2,s))^{\alpha-1} P_{\alpha}(\Psi(t_2,s))  f(s)\psi'(s) ds\right.\\
			&\left.\hspace{2cm}+\int_{t_1}^{t_2}(\Psi(t_2,a))^{1-\gamma} (\Psi(t_2,s))^{\alpha-1} P_{\alpha}(\Psi(t_2,s))  f(s)\psi'(s) ds\right.\\
			&\left.\hspace{3cm}-\int_{a}^{t_1}(\Psi(t_1,a))^{1-\gamma} (\Psi(t_1,s))^{\alpha-1} P_{\alpha}(\Psi(t_1,s))  f(s)\psi'(s) ds}\\
		&=\norm{\int_{a}^{t_1}(\Psi(t_1,a))^{1-\gamma}(\Psi(t_1,s))^{\alpha-1}[P_{\alpha}(\Psi(t_2,s))-P_{\alpha}(\Psi(t_1,s))]f(s)\psi'(s)ds\right.\\
			&\left.\hspace{2cm}-\int_{a}^{t_1}(\Psi(t_1,a))^{1-\gamma}(\Psi(t_1,s))^{\alpha-1}P_{\alpha}(\Psi(t_2,s))f(s)\psi'(s)ds\right.\\
			&\left.\hspace{2cm}+\int_{a}^{t_1} (\Psi(t_2,a))^{1-\gamma}(\Psi(t_2,s))^{\alpha-1} P_{\alpha}(\Psi(t_2,s))  f(s)\psi'(s) ds\right.\\
			&\left.\hspace{2cm}+\int_{t_1}^{t_2}(\Psi(t_2,a))^{1-\gamma} (\Psi(t_2,s))^{\alpha-1} P_{\alpha}(\Psi(t_2,s))  f(s)\psi'(s) ds}\\
		&\le\int_{a}^{t_1}(\Psi(t_1,a))^{1-\gamma}(\Psi(t_1,s))^{\alpha-1}\norm{[P_{\alpha}(\Psi(t_2,s))-P_{\alpha}(\Psi(t_1,s))]f(s)}\psi'(s)ds\\
		&\hspace{2cm}+\int_{a}^{t_1} [(\Psi(t_1,a))^{1-\gamma}(\Psi(t_1,s))^{\alpha-1}-(\Psi(t_2,a))^{1-\gamma}(\Psi(t_2,s))^{\alpha-1}]\\
		&\hspace{4cm}\times \norm{P_{\alpha}(\Psi(t_2,s))  f(s)}\psi'(s) ds\\
		&\hspace{2cm}+\int_{t_1}^{t_2}(\Psi(t_2,a))^{1-\gamma} (\Psi(t_2,s))^{\alpha-1} \norm{P_{\alpha}(\Psi(t_2,s))  f(s)}\psi'(s) ds.
	\end{align*}
	Using again Proposition \ref{Prop4.3} we obtain from above
	\begin{equation}
		\begin{aligned}
			&\norm{(\Xi f )(t_2)-(\Xi f )(t_1)}\\
			\le&\int_{a}^{t_1}(\Psi(t_1,a))^{1-\gamma}(\Psi(t_1,s))^{\alpha-1}\norm{[P_{\alpha}(\Psi(t_2,s))-P_{\alpha}(\Psi(t_1,s))]f(s)}\psi'(s)ds\\
			+&\frac{M}{\Gamma(\alpha)}\int_{a}^{t_1} [(\Psi(t_1,a))^{1-\gamma}(\Psi(t_1,s))^{\alpha-1}-(\Psi(t_2,a))^{1-\gamma}(\Psi(t_2,s))^{\alpha-1}]\norm{f(s)}\psi'(s) ds\\
			&\hspace{4cm}+\frac{M}{\Gamma(\alpha)}\int_{t_1}^{t_2}(\Psi(t_2,a))^{1-\gamma} (\Psi(t_2,s))^{\alpha-1} \norm{f(s)}\psi'(s) ds\\
			&=: I_1+I_2+I_3,
		\end{aligned}
	\end{equation}
	where
	\begin{equation*}
		I_1:=\int_{a}^{t_1}(\Psi(t_1,a))^{1-\gamma}(\Psi(t_1,s))^{\alpha-1}\norm{[P_{\alpha}(\Psi(t_2,s))-P_{\alpha}(\Psi(t_1,s))]f(s)}\psi'(s)ds,
	\end{equation*}
	\begin{align*}
		I_2:=&\frac{2M}{\Gamma(\alpha)}\int_{a}^{t_1} [(\Psi(t_1,a))^{1-\gamma}(\Psi(t_1,s))^{\alpha-1}-(\Psi(t_2,a))^{1-\gamma}(\Psi(t_2,s))^{\alpha-1}]\\
		&\hspace{5cm}\times \norm{f(s)}\psi'(s) ds
	\end{align*}
	and
	\begin{align*}
		I_3:=&\frac{M}{\Gamma(\alpha)}\int_{a}^{t_2}(\Psi(t_2,a))^{1-\gamma} (\Psi(t_2,s))^{\alpha-1} \norm{f(s)}\psi'(s) ds\\
		&\hspace{3cm}-\frac{M}{\Gamma(\alpha)}\int_{a}^{t_1}(\Psi(t_1,a))^{1-\gamma} (\Psi(t_1,s))^{\alpha-1} \norm{f(s)}\psi'(s) ds.
	\end{align*}
	
	As before (the case where $t_1=a$), one can prove that $I_3 \to 0$ as $t_2 \to t_1$.\\
	Since,
	\begin{equation*}
		\begin{aligned}
			&[(\Psi(t_1,a))^{1-\gamma}(\Psi(t_1,s))^{\alpha-1}-(\Psi(t_2,a))^{1-\gamma}(\Psi(t_2,s))^{\alpha-1}]  \norm{f(s)}\psi'(s)\\
			&\le (\Psi(t_1,a))^{1-\gamma}(\Psi(t_1,s))^{\alpha-1} \norm
			f(s)\psi'(s),
		\end{aligned}
	\end{equation*}
	and
	\begin{equation*}
		\int_{a}^{t_1} (\Psi(t_1,a))^{1-\gamma}(\Psi(t_1,s))^{\alpha-1} \norm{f(s)}\psi'(s)ds
	\end{equation*}
	exists, so applying the Lebesgue Dominated Convergence theorem, we can see  $I_2 \to 0$ as $t_2 \to t_1$.\\
	It remains to prove $I_1\to 0$ as $t_2\to t_1$. For $\epsilon>0$, the integral $I_1$ can be written as
	\begin{align*}
		I_1:=&\int_{a}^{t_1-\epsilon}(\Psi(t_1,a))^{1-\gamma}(\Psi(t_1,s))^{\alpha-1}\norm{[P_{\alpha}(\Psi(t_2,s))-P_{\alpha}(\Psi(t_1,s))]f(s)}\psi'(s)ds\\
		&+\int_{t_1-\epsilon}^{t_1}(\Psi(t_1,a))^{1-\gamma}(\Psi(t_1,s))^{\alpha-1}\norm{[P_{\alpha}(\Psi(t_2,s))-P_{\alpha}(\Psi(t_1,s))]f(s)}\psi'(s)ds\\
		\le&\int_{a}^{t_1-\epsilon}(\Psi(t_1,a))^{1-\gamma}(\Psi(t_1,s))^{\alpha-1}\norm{f(s)}\\
		&\hspace{2cm}\times\sup_{s \in [a,t_1-\epsilon]}\norm{P_{\alpha}(\Psi(t_2,s))-P_{\alpha}(\Psi(t_1,s))}_{B(X)}\psi'(s)ds
	\end{align*}
	\begin{align*}
		&+\frac{2M}{\Gamma(\alpha)}\int_{t_1-\epsilon}^{t_1}(\Psi(t_1,a))^{1-\gamma}(\Psi(t_1,s))^{\alpha-1}\norm{f(s)}\psi'(s)ds\\
		\le&\int_{a}^{t_1-\epsilon}(\Psi(t_1,a))^{1-\gamma}(\Psi(t_1,s))^{\alpha-1}\norm{f(s)}\\
		&\hspace{3cm}\times\sup_{s \in [a,t_1-\epsilon]}\norm{P_{\alpha}(\Psi(t_2,s))-P_{\alpha}(\Psi(t_1,s))}_{B(X)}\psi'(s)ds\\
		+&\frac{2M}{\Gamma(\alpha)}\left[\int_{a}^{t_1}(\Psi(t_1,a))^{1-\gamma}(\Psi(t_1,s))^{\alpha-1}\norm{f(s)}\psi'(s)ds\right.\\
		&\left.\hspace{1cm}-\int_{a}^{t_1-\epsilon}(\psi(t_1-\epsilon)-\psi(a))^{1-\gamma}(\psi(t_1-\epsilon)-\psi(s))^{\alpha-1}\norm{f(s)}\psi'(s)ds\right]\\
		+&\frac{2M}{\Gamma(\alpha)}\int_{a}^{t_1-\epsilon}\left[(\psi(t_1-\epsilon)-\psi(a))^{1-\gamma}(\psi(t_1-\epsilon)-\psi(s))^{\alpha-1}\right.\\
		&\left.\hspace{3cm}-(\Psi(t_1,a))^{1-\gamma}(\Psi(t_1,s))^{\alpha-1}\right]\norm{f(s)}\psi'(s)ds\\
		=: &J_1 + J_2 + J_3,
	\end{align*}
	
	where 
	\begin{align*}
		J_1\le& \sup_{s \in [a,t_1-\epsilon]}\norm{P_{\alpha}(\Psi(t_2,s))-P_{\alpha}(\Psi(t_1,s))}_{B(X)}\\
		&\int_{a}^{t_1-\epsilon}(\Psi(t_1,a))^{1-\gamma}(\Psi(t_1,s))^{\alpha-1}\norm{f(s)}\psi^{\prime}(s)ds,
	\end{align*}
	\begin{equation*}
		\begin{aligned}
			J_2:=&\frac{2M}{\Gamma(\alpha)}\left[\int_{a}^{t_1}(\Psi(t_1,a))^{1-\gamma}(\Psi(t_1,s))^{\alpha-1}\norm{f(s)}\psi'(s)ds\right.\\
			&\left.\hspace{1cm}-\int_{a}^{t_1-\epsilon}(\psi(t_1-\epsilon)-\psi(a))^{1-\gamma}(\psi(t_1-\epsilon)-\psi(s))^{\alpha-1}\norm{f(s)}\psi'(s)ds\right],
		\end{aligned}
	\end{equation*}
	and
	\begin{equation}
		\begin{aligned}
			J_3:=&\frac{2M}{\Gamma(\alpha)}\int_{a}^{t_1-\epsilon}\left[(\psi(t_1-\epsilon)-\psi(a))^{1-\gamma}(\psi(t_1-\epsilon)-\psi(s))^{\alpha-1}\right.\\
			&\left.\hspace{3cm}-(\Psi(t_1,a))^{1-\gamma}(\Psi(t_1,s))^{\alpha-1}\right]\norm{f(s)}\psi'(s)ds.
		\end{aligned}
	\end{equation}
	It is clear that $J_1 \to 0$ as $t_2 \to t_1$.\\
	Also repeating the same process which we use to show $I_1$ and $I_2$ goes to zero, we can conclude that $J_2$ and $J_3$ goes to zero as $\epsilon \to 0$ and hence $I_3 \to 0 $ as $t_2 \to t_1$.\\
	Thus$$\norm{(\Xi f )(t_2)-(\Xi f )(t_1)} \to 0,$$ independently of $f  \in B_R$ as $t_2 \to t_1$, which means  $\{\Xi f  : f  \in B_R \}$ is equicontinuous and consequently 
	$\{\Xi : f  \in  B_R \}$ is equicontinuous.\\

	We now show that the set $\{(\Psi(t,a))^{1-\gamma}\Lambda f(t): f\in B_R\}\subset C([a,b],X)$  is relatively compact in $X$. For $t=a$, it is trivial. Take $a<t\le b$ and choose $a<\lambda<t$, then we define the operator $\Gamma_{\lambda,\delta}:L^2([a,b],X)\to C([a,b],X)$ as follows:
	\begin{align*}
		&	\Gamma_{\lambda,\delta}(f)(t)\\
		&=(\Psi(t,a))^{1-\gamma}  \int_{a}^{t-\lambda}  \int_{\delta}^{\infty} \psi'(s) \alpha \theta  M_{\alpha}(\theta)(\Psi(t,s))^{\alpha-1}  T((\Psi(t,s))^{\alpha}\theta) f(s)d\theta ds, t\in [a,b].
	\end{align*}
	We can express $\Gamma_{\lambda,\delta}$ as follows:
	\begin{align*}
		&\Gamma_{\lambda,\delta}(f)(t)\\
		=& (\Psi(t,a))^{1-\gamma} T(\epsilon^{\alpha}\delta) \int_{a}^{t-\lambda}  \int_{\delta}^{\infty} \psi'(s) \alpha \theta  M_{\alpha}(\theta)(\Psi(t,s))^{\alpha-1} \\
		&\hspace{7cm}\times T((\Psi(t,s))^{\alpha}\theta-\epsilon^{\alpha}\delta) f(s)d\theta ds.
	\end{align*}
	It is easy to see that the set
	\begin{equation}
		\left\{ \int_{a}^{t-\lambda}  \int_{\delta}^{\infty} \psi'(s) \alpha \theta  M_{\alpha}(\theta)(\Psi(t,s))^{\alpha-1}  T((\Psi(t,s))^{\alpha}\theta-\epsilon^{\alpha}\delta) f(s)d\theta ds : f\in B_R \right\}
	\end{equation}
	is bounded. Indeed,
	\begin{align*}
		&\norm{ \int_{a}^{t-\lambda}  \int_{\delta}^{\infty} \psi'(s) \alpha \theta  M_{\alpha}(\theta)(\Psi(t,s))^{\alpha-1}  T((\Psi(t,s))^{\alpha}\theta-\epsilon^{\alpha}\delta) f(s)d\theta ds }\\
		\le & \int_{a}^{t-\lambda}  \int_{\delta}^{\infty} \psi'(s) \alpha \theta  M_{\alpha}(\theta)(\Psi(t,s))^{\alpha-1} \norm{T((\Psi(t,s))^{\alpha}\theta-\epsilon^{\alpha}\delta) f(s)}ds\\
		\le &M\int_{a}^{t-\lambda}  \int_{\delta}^{\infty} \psi'(s) \alpha \theta  M_{\alpha}(\theta)(\Psi(t,s))^{\alpha-1} \norm{f(s)}ds,
	\end{align*}
	which is finite.\\
	Hence, the set
	\begin{equation}
		\left\{ \int_{a}^{t-\lambda}  \int_{\delta}^{\infty} \psi'(s) \alpha \theta  M_{\alpha}(\theta)(\Psi(t,s))^{\alpha-1}  T((\Psi(t,s))^{\alpha}\theta-\epsilon^{\alpha}\delta) f(s)d\theta ds : f\in B_R \right\}
	\end{equation}
	bounded.\\
	By the compactness of $T(t)$, we get the set 
	\begin{equation}
		\left\{(\Gamma _{\lambda,\delta}f)(t) :f \in B_R    \right\}
	\end{equation}
	is relatively compact in $X$ for all $\lambda>0$ and all $\delta>0$.\\
	Furthermore, for any $f \in B_R $ we have
	\begin{align*}
		&\norm{(\Gamma f)(t)-(\Gamma _{\lambda,\delta}f)(t)}\\
		=&(\Psi(t,a))^{1-\gamma}\norm{\left[  \int_{a}^{t}  \int_{0}^{\infty} \alpha \theta  M_{\alpha}(\theta)  T((\Psi(t,s))^{\alpha}\theta)  (\psi(t-\psi(s))^{\alpha-1}   f(s)\psi'(s)d\theta ds\right]\right.\\
			&\left.\hspace{2cm}-\left[  \int_{a}^{t-\epsilon}  \int_{\delta}^{\infty} \alpha \theta  M_{\alpha}(\theta)  T((\Psi(t,s))^{\alpha}\theta)  (\Psi(t,s))^{\alpha-1}   f(s)\psi'(s)d\theta ds\right]}\\
		=&(\Psi(t,a))^{1-\gamma}\norm{ \int_{a}^{t}  \int_{0}^{\infty} \alpha \theta  M_{\alpha}(\theta)  T((\Psi(t,s))^{\alpha}\theta)  (\Psi(t,s))^{\alpha-1}   f(s)\psi'(s)d\theta ds\right.\\
			&\left.\hspace{2cm}-  \int_{a}^{t-\epsilon}  \int_{\delta}^{\infty} \alpha \theta  M_{\alpha}(\theta)  T((\Psi(t,s))^{\alpha}\theta)  (\Psi(t,s))^{\alpha-1}   f(s)\psi'(s)d\theta ds}\\
	\end{align*}
	\begin{align*}
		& = (\Psi(t,a))^{1-\gamma}\norm{ \int_{a}^{t}  \int_{0}^{\delta} \alpha \theta  M_{\alpha}(\theta)  T((\Psi(t,s))^{\alpha}\theta)  (\Psi(t,s))^{\alpha-1}   f(s)\psi'(s)d\theta ds\right.\\
			&\left.+\int_{a}^{t}  \int_{\delta}^{\infty} \alpha \theta  M_{\alpha}(\theta)  T((\Psi(t,s))^{\alpha}\theta)  (\Psi(t,s))^{\alpha-1}   f(s)\psi'(s)d\theta ds\right.\\
			&\left.-  \int_{a}^{t-\epsilon}  \int_{\delta}^{\infty} \alpha \theta  M_{\alpha}(\theta)  T((\Psi(t,s))^{\alpha}\theta)  (\Psi(t,s))^{\alpha-1}   f(s)\psi'(s)d\theta ds}\\
		& = (\Psi(t,a))^{1-\gamma}\norm{\int_{a}^{t}  \int_{0}^{\delta} \alpha \theta  M_{\alpha}(\theta)(\Psi(t,s))^{\alpha-1}   T((\Psi(t,s))^{\alpha}\theta)    f(s)\psi'(s)d\theta ds\right.\\
			&\left.+(\Psi(t,a))^{1-\gamma} \int_{t-\epsilon}^{t}  \int_{\delta}^{\infty} \alpha \theta  M_{\alpha}(\theta)  T((\Psi(t,s))^{\alpha}\theta)  (\Psi(t,s))^{\alpha-1}   f(s)\psi'(s)d\theta ds}\\
		&\le (\Psi(t,a))^{1-\gamma} \int_{a}^{t}  \int_{0}^{\delta} \alpha \theta  M_{\alpha}(\theta)(\Psi(t,s))^{\alpha-1} \norm{T((\Psi(t,s))^{\alpha}\theta)    f(s)}\psi'(s)d\theta ds\\
		&\hspace{1cm}+(\Psi(t,a))^{1-\gamma}\int_{t-\epsilon}^{t}  \int_{\delta}^{\infty} \alpha \theta  M_{\alpha}(\theta) \norm{T((\Psi(t,s))^{\alpha}\theta)  (\Psi(t,s))^{\alpha-1}   f(s)}\psi'(s)d\theta ds\\
		&\le (\Psi(t,a))^{1-\gamma} M\int_{a}^{t}  \int_{0}^{\delta} \alpha \theta  M_{\alpha}(\theta)(\Psi(t,s))^{\alpha-1}\norm{f(s)}\psi'(s)d\theta ds\\
		& +M(\Psi(t,a))^{1-\gamma} \int_{t-\epsilon}^{t}  \int_{\delta}^{\infty} \alpha \theta  M_{\alpha}(\theta)(\Psi(t,s))^{\alpha-1}\norm{f(s)}\psi'(s)d\theta ds\\
		& \le (\Psi(t,a))^{1-\gamma} M\int_{a}^{t}  \int_{0}^{\infty} \alpha \theta  M_{\alpha}(\theta)(\Psi(t,s))^{\alpha-1}\norm{f(s)}\psi'(s)d\theta ds\\
		&+M(\Psi(t,a))^{1-\gamma} \int_{t-\epsilon}^{t}  \int_{0}^{\infty} \alpha \theta  M_{\alpha}(\theta)\norm{f(s)}(\Psi(t,s))^{\alpha-1}\psi'(s)d\theta ds\\
		&\le (\Psi(t,a))^{1-\gamma} \frac{M}{\Gamma(\alpha)}\int_{a}^{t}(\Psi(t,s))^{\alpha-1}\norm{f(s)}\psi'(s) ds \\
		&+\frac{M}{\Gamma(\alpha)}(\Psi(t,a))^{1-\gamma} \int_{t-\epsilon}^{t}\norm{f(s)}(\Psi(t,s))^{\alpha-1}\psi'(s) ds \to 0 ~as~ \lambda
		,\delta \to 0.
	\end{align*}

	Therefore, there are relatively compact sets arbitrarily close to the set  $$\{(\Psi(t,a))^{1-\gamma}\Lambda f(t): f\in B_R\}\subset C([a,b], X).$$ Hence, the set $\{(\Psi(t,a))^{1-\gamma}\Lambda f(t): f\in B_R\}\subset C([a,b],X)$ is relatively compact in $X$.
	
	Combining these two with the Arzela-Ascoli theorem, the set $$\{(\psi(\cdot)-\psi(a))^{1-\gamma}\Lambda f(\cdot): f\in B_R\}\subset C([a,b],X)$$ is  relatively compact in $C([a,b],X)$. Consequently, the set $\{\Lambda f(\cdot): f\in B_R\}$ is relatively compact in $C^{1-\gamma;\psi}([a,b],X)$.
\end{proof}
\begin{corollary}\label{Corollary}
	Suppose $\{f_n\}\subset L^1([a,b],X)$ be an integrably bounded sequence, that means there exists $m\in L^{\frac{1}{r}}([a,b],\mathbb{R}^+)$, $r\in (0,\alpha)$ such that
	\begin{equation}
		\norm{f_n(t)}\le m(t)~\text{a.a.}~t\in [a,b].
	\end{equation}
	Then the set $\left\{\int_{a}^{b}K_{\alpha}(\Psi(t,s))f_n(s)\psi^{\prime}(s)ds\right\}$ is relatively compact in $C^{1-\gamma;\psi}([a,b],X)$.
\end{corollary}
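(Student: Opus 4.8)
The plan is to view the set in question as the image $\{\Lambda f_n : n \in \mathbb{N}\}$ of the operator $\Lambda$ from Lemma \ref{Lem 5.8}, reading the integral as $\int_a^t$ so that it matches the definition of $\Lambda$ and makes sense as a function of $t$. Once this identification is made, the corollary is essentially a restatement of the compactness of $\Lambda$ for one particular sequence, and the only real point to verify is that the domination hypothesis $\norm{f_n(t)} \le m(t)$, with $m \in L^{1/r}([a,b],\mathbb{R}^+)$ and $r \in (0,\alpha)$, can play the exact role that the uniform bound $\norm{f}_{L^2} \le R$ played inside the proof of Lemma \ref{Lem 5.8}.

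The one genuine analytic input is a variant of Lemma \ref{tech}. First I would verify that the kernel $s \mapsto \psi'(s)(\Psi(t,s))^{\alpha-1}$ lies in $L^{1/(1-r)}([a,t],\mathbb{R})$ for every $t \in (a,b]$. This is precisely where the restriction $r < \alpha$ enters: raising the singular factor to the power $1/(1-r)$ produces the exponent $(\alpha-1)/(1-r)$ on $\Psi(t,s)$, and this is integrable near $s=t$ exactly when $(\alpha-1)/(1-r) > -1$, i.e.\ when $\alpha > r$. With this in hand, every application of H\"older's inequality in the proof of Lemma \ref{Lem 5.8} — originally run with the conjugate pair $(2,2)$ — is replaced by the conjugate pair $\bigl(1/r,\,1/(1-r)\bigr)$, and the constant $R$ is replaced by $\norm{m}_{L^{1/r}([a,b])}$. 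In particular this also shows each $\Lambda f_n$ is well defined.

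With this single substitution the entire argument of Lemma \ref{Lem 5.8} transfers essentially verbatim. For equicontinuity one bounds $\norm{f_n(s)}$ by $m(s)$ throughout; the estimates for $I_1,I_2,I_3$ (and their refinement into $J_1,J_2,J_3$) are unchanged, with the Lebesgue dominated convergence theorem now applied against the fixed dominating function built from $m$ rather than from an individual $f_n$. For relative compactness at each fixed $t$ one uses the same approximants $\Gamma_{\lambda,\delta}$, whose ranges are relatively compact by compactness of $T(t)$ for $t>0$, and the error estimate $\norm{(\Lambda f_n)(t) - (\Gamma_{\lambda,\delta} f_n)(t)} \to 0$ as $\lambda,\delta \to 0$ is uniform in $n$ because $\norm{f_n} \le m$ pointwise. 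An appeal to the Arzel\`a--Ascoli theorem then gives relative compactness of $\{(\Psi(\cdot,a))^{1-\gamma}\Lambda f_n\}$ in $C([a,b],X)$, which is exactly relative compactness of $\{\Lambda f_n\}$ in $C^{1-\gamma;\psi}([a,b],X)$.

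The main obstacle — and the reason the corollary is not an immediate consequence of Lemma \ref{Lem 5.8} used as a black box — is that the domination bound only gives $m \in L^{1/r}$, and since $1/r$ may be strictly smaller than $2$ (when $\alpha$ is close to $1$ and $r$ close to $\alpha$), the sequence $\{f_n\}$ need not be bounded in $L^2([a,b],X)$. Hence one cannot simply feed these $f_n$ into the compactness of $\Lambda$ on $L^2$; the resolution is the tailored choice of H\"older exponent $1/(1-r)$ together with the kernel integrability guaranteed by $r < \alpha$, which keeps all estimates finite and uniform in $n$.
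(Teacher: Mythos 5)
Your proposal is correct, and the interesting point is how it relates to what the paper actually does: the paper gives no proof of this corollary at all, presenting it as an immediate consequence of Lemma \ref{Lem 5.8} --- precisely the black-box application you argue is insufficient. Your objection is genuine and accurate. Lemma \ref{Lem 5.8} concerns bounded subsets of $L^2([a,b],X)$, and the hypothesis $\norm{f_n(t)}\le m(t)$ with $m\in L^{1/r}([a,b],\mathbb{R}^+)$ yields a uniform $L^2$ bound only when $1/r\ge 2$, i.e.\ $r\le\tfrac12$; since the corollary (and Hypothesis (F4), which is where it is invoked in STEP-V of Theorem \ref{Thm3.1}) allows any $r\in(0,\alpha)$ with $\alpha>\tfrac12$, the range $r\in(\tfrac12,\alpha)$ is admissible, and there the $f_n$ need not lie in any ball $B_R$ of $L^2([a,b],X)$. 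Your repair is exactly the right one: replace Lemma \ref{tech} by the observation that $s\mapsto\psi'(s)(\Psi(t,s))^{\alpha-1}$ lies in $L^{1/(1-r)}([a,t])$ because $(\alpha-1)/(1-r)>-1$ if and only if $r<\alpha$, then rerun the proof of Lemma \ref{Lem 5.8} with the H\"older conjugate pair $(1/r,\,1/(1-r))$ in place of $(2,2)$ and with $\norm{m}_{L^{1/r}}$ in place of $R$. The remaining steps (the $I_i$ and $J_i$ estimates, the approximants $\Gamma_{\lambda,\delta}$, Arzel\`a--Ascoli) transfer as you say, and in fact your version is cleaner on one point: domination by the single fixed function $m$ makes the uniformity in $n$ of the dominated-convergence and tail estimates automatic, whereas in the lemma the dominating function varies with $f\in B_R$. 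Your reading of the upper limit of integration as $t$ rather than $b$ is also the correct interpretation (otherwise $\Psi(t,s)<0$ for $s>t$ and the kernel is not even defined). In short, you use the same compactness machinery the paper intends, but you close a real gap hidden behind the bare ``Corollary'' label; the paper's implicit one-line deduction is literally valid only for $r\le\tfrac12$, via the embedding $L^{1/r}([a,b])\hookrightarrow L^{2}([a,b])$ on a finite interval.
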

We now give the following lemma which is of fundamental importance.
\begin{lemma}\label{Lem 5.6}
	Suppose that $h$ satisfies Hypothesis (H1-(H5). Then, we have
	\begin{itemize}
		\item[(1)] for each $(q(\cdot),u(\cdot))\in C^{1-\gamma;\psi}([a,b],X)\times L^2([a,b],Y)$, the inequality holds
		\begin{align*}
			J(q(\cdot), u(\cdot))\ge &-(\norm{k_1}_{L^1([a,b],\mathbb{R}^+)}+K\norm{q(\cdot)}_{C^{1-\gamma;\psi}([a,b],X)}\norm{k_2}_{L^2([a,b],\mathbb{R}^+)}\\
			&+c_h\sqrt{b}\norm{u(\cdot)}_{L^2([a,b],Y)}).
		\end{align*}
		\item[(2)] $(q(\cdot), u(\cdot))\mapsto J(q(\cdot), u(\cdot))$ is strongly weakly lower semicontinuous;
		\item[(3)] $u(\cdot)\mapsto J(q(\cdot), u(\cdot))$ is convex.
	\end{itemize}
\end{lemma}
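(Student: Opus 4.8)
The plan is to dispatch the three assertions in sequence, spending most of the effort on the lower semicontinuity in part~(2).

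For part~(1) I would start from $J(q(\cdot),u(\cdot))=\int_a^b h(t,q(t),u(t))\,dt\ge-\int_a^b\abs{h(t,q(t),u(t))}\,dt$ and insert the growth bound (H2). The term $\int_a^b k_1(t)\,dt$ produces $\norm{k_1}_{L^1([a,b],\mathbb{R}^+)}$ at once. For the state term I would use $\norm{q(t)}_X\le(\Psi(t,a))^{\gamma-1}\norm{q(\cdot)}_{C^{1-\gamma;\psi}([a,b],X)}$ and Cauchy--Schwarz, so that $\int_a^b k_2(t)\norm{q(t)}_X\,dt\le\norm{q(\cdot)}_{C^{1-\gamma;\psi}([a,b],X)}\norm{k_2}_{L^2([a,b],\mathbb{R}^+)}\big(\int_a^b(\Psi(t,a))^{2(\gamma-1)}\,dt\big)^{1/2}$; the last integral is finite precisely because $\gamma\ge\alpha>\tfrac12$, and its square root is the constant $K$ in the statement. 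The control term is handled by Cauchy--Schwarz as $\int_a^b c_h\norm{u(t)}_Y\,dt\le c_h\sqrt{b}\,\norm{u(\cdot)}_{L^2([a,b],Y)}$. For part~(3) I would fix $q(\cdot)$, take $u_0,u_1\in L^2([a,b],Y)$ and $\lambda\in[0,1]$, apply the pointwise convexity of $u\mapsto h(t,q(t),u)$ from (H4), namely $h(t,q(t),\lambda u_1(t)+(1-\lambda)u_0(t))\le\lambda h(t,q(t),u_1(t))+(1-\lambda)h(t,q(t),u_0(t))$ a.e.\ $t$, and integrate to obtain convexity of $u(\cdot)\mapsto J(q(\cdot),u(\cdot))$.

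The core is part~(2). Let $q_n\to q$ strongly in $C^{1-\gamma;\psi}([a,b],X)$ and $u_n\rightharpoonup u$ weakly in $L^2([a,b],Y)$; the goal is $\liminf_n J(q_n,u_n)\ge J(q,u)$. I would split $J(q_n,u_n)=[J(q_n,u_n)-J(q,u_n)]+J(q,u_n)$. Because $\{u_n\}$ is weakly convergent it is $L^2$-bounded, hence sits inside some bounded set $D$, so (H5) applies and bounds the bracket by $\int_a^b a_D(t)\norm{q_n(t)-q(t)}_X\,dt\le K\norm{a_D}_{L^2([a,b],\mathbb{R}^+)}\norm{q_n-q}_{C^{1-\gamma;\psi}([a,b],X)}$ via the same weighted Cauchy--Schwarz estimate as in part~(1); this tends to $0$. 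It then remains to prove $\liminf_n J(q,u_n)\ge J(q,u)$, where convexity becomes indispensable. Writing $L=\liminf_n J(q,u_n)$ (finite case) and passing to a subsequence realizing $L$, I would invoke Mazur's lemma to produce convex combinations $v_k=\sum_{j\ge k}\lambda_j^{(k)}u_{n_j}$ converging strongly to $u$ in $L^2$. By part~(3), $J(q,v_k)\le\sum_{j\ge k}\lambda_j^{(k)}J(q,u_{n_j})\le\sup_{j\ge k}J(q,u_{n_j})\to L$. Finally, the strong lower semicontinuity of $u\mapsto J(q,u)$ --- obtained by extracting an a.e.-convergent subsequence of $v_k$, using the pointwise lower semicontinuity in (H4) together with Fatou's lemma against the integrable lower bound supplied by (H2) --- yields $J(q,u)\le\liminf_k J(q,v_k)\le L$, which combined with the vanishing bracket completes the claim.

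The main obstacle I anticipate lies in this weak-convergence step: weak $L^2$ convergence of $u_n$ gives no pointwise control of the values $u_n(t)$, so one cannot pass to the limit directly inside the nonlinear integrand. The remedy is the pairing of convexity (H4) with Mazur's lemma, which converts weak convergence into strong convergence of convex combinations along which pointwise lower semicontinuity and Fatou's lemma are available. The one technical subtlety to watch is the uniform integrable minorant required for Fatou: since the (H2) bound carries a term $c_h\norm{v_k(t)}_Y$, one must pass to a further subsequence dominated by a fixed $L^2$ function so that this term does not spoil integrability, after which the lower bound is integrable by the weighted estimate already used in part~(1).
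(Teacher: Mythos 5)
The paper states Lemma \ref{Lem 5.6} without proof (the proof is simply omitted from the text), so there is no argument of the authors' to compare yours against; judged on its own merits, your proof is correct and is the standard one for results of this type. Parts (1) and (3) are routine and you handle them properly: the weighted estimate $\norm{q(t)}_X\le(\Psi(t,a))^{\gamma-1}\norm{q}_{C^{1-\gamma;\psi}([a,b],X)}$ together with Cauchy--Schwarz produces the constant $K=\bigl(\int_a^b(\Psi(t,a))^{2(\gamma-1)}dt\bigr)^{1/2}$, which is finite because $2(1-\gamma)<1$ (and because $\psi'$ is continuous and nonvanishing on $[a,b]$, so this integral is comparable to the one with the factor $\psi'(t)$ inserted), while the control term gives $\sqrt{b-a}\le\sqrt{b}$, consistent with the statement. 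For part (2), the decomposition $J(q_n,u_n)=[J(q_n,u_n)-J(q,u_n)]+J(q,u_n)$, the use of (H5) to make the bracket vanish, and then Mazur's lemma plus convexity plus Fatou (along an a.e.-convergent subsequence of the convex combinations dominated by a fixed $L^2$ function) to get $\liminf_n J(q,u_n)\ge J(q,u)$ is exactly the classical convexity argument for strong--weak lower semicontinuity of integral functionals, and you correctly identify the two genuine technical points: the need for a $k$-independent integrable minorant before applying Fatou, and the passage to a dominated subsequence to obtain it.

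The one step you should shore up is the appeal to (H5). As literally written, (H5) quantifies over bounded sets $D\subset Y$, i.e.\ over controls whose pointwise values lie in a bounded subset of $Y$, whereas weak convergence of $\{u_n\}$ in $L^2([a,b],Y)$ bounds only the $L^2$ norms and gives no pointwise bound, so ``$\{u_n\}$ sits inside some bounded set $D$'' is not justified under that reading. The hypothesis is evidently misstated in the paper (it writes $u(t)$ for $u\in D$ and takes $a_D\in L^2([a,b],\mathbb{R}^+)$), and under the intended reading ($D$ a bounded set of control functions) your step is fine. If one insists on the literal reading, the bracket can still be handled by a truncation: estimate on $\{t:\norm{u_n(t)}_Y\le R\}$ via (H5) with $D$ the ball of radius $R$ in $Y$, and on the complement---whose measure is at most $\sup_n\norm{u_n}_{L^2}^2/R^2$ by Chebyshev---via the growth bound (H2) together with uniform integrability of the fixed functions $k_1$, $k_2(\cdot)(\Psi(\cdot,a))^{\gamma-1}$, then let $R\to\infty$. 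With that remark added, the proof is complete.
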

\begin{lemma}\label{Lem 5.7}
	If $(q(\cdot), u(\cdot))\in C^{1-\gamma;\psi}([a,b], X)\times L^2([a,b], Y)$ is a solution of problem \eqref{4.1}-\eqref{4.2}, then the following inequality holds
	\begin{equation}
		\norm{q}_{C^{1-\gamma;\psi}([a,b],X)}\le N_0, \norm{u}_{L^2([a,b],Y)}\le N_0.
	\end{equation}
\end{lemma}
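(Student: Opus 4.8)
The plan is to start from the defining integral equation of a mild solution and convert it into a fractional integral inequality for the weighted state, which I then close by a generalized Gronwall argument. First I would take the mild solution representation \eqref{4.3},
\begin{equation*}
	q(t)= S_{\alpha, \beta}(\Psi(t,a))x_0 + \int_{a}^{t}  K_{\alpha}(\Psi(t,s)) Bu(s) \psi'(s)ds,
\end{equation*}
multiply by the weight $(\Psi(t,a))^{1-\gamma}$, and take norms. Writing $w(t):=(\Psi(t,a))^{1-\gamma}\norm{q(t)}_X$, Proposition \ref{Prop4.3} bounds the first term by the constant $\frac{M}{\Gamma(\gamma)}\norm{x_0}$, because the weight $(\Psi(t,a))^{1-\gamma}$ cancels the $(\Psi(t,a))^{\gamma-1}$ decay exactly, while the kernel estimate $\norm{K_{\alpha}(\Psi(t,s))x}\le(\Psi(t,s))^{\alpha-1}\frac{M}{\Gamma(\alpha)}\norm{x}$ controls the integral term.

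Next I would invoke the control constraint \eqref{4.2}, that is $u(s)\in U(s,q(s))$, together with the growth hypothesis (U3), to replace $\norm{u(s)}_Y$ by $a_U(s)+c_U w(s)$. This splits the integral contribution into an $a_U$-driven piece and a $w$-driven piece. For the $a_U$-piece I would apply H\"older's inequality and Lemma \ref{tech} (the $L^2$-integrability of $s\mapsto\psi'(s)(\Psi(t,s))^{\alpha-1}$) to bound it by $\frac{M\norm{B}}{\Gamma(\alpha)}\norm{a_U}_{L^2([a,b],\mathbb{R}^+)}$ times a factor that is uniformly bounded on $[a,b]$; since $1-\gamma\ge 0$ the weight $(\Psi(t,a))^{1-\gamma}$ is itself dominated by the constant $(\Psi(b,a))^{1-\gamma}$, so this whole contribution reduces to an absolute constant $C_1$.

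The remaining piece produces an inequality of the form
\begin{equation*}
	w(t) \le C_1 + C_2 \int_{a}^{t}\psi'(s)(\Psi(t,s))^{\alpha-1}w(s)\,ds,
\end{equation*}
where the bounded weight has again been absorbed into $C_2=\frac{M\norm{B}c_U}{\Gamma(\alpha)}(\Psi(b,a))^{1-\gamma}$. Here is the main obstacle: the kernel $(\Psi(t,s))^{\alpha-1}$ is singular at $s=t$, so an ordinary Gronwall estimate is not available. This is precisely where I would invoke the generalized (fractional) Gronwall inequality for the $\psi$-kernel, which yields a Mittag--Leffler type majorant and hence a uniform bound $w(t)\le N_0$ on $[a,b]$. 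Taking the supremum over $t$ gives $\norm{q}_{C^{1-\gamma;\psi}([a,b],X)}\le N_0$.

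Finally, substituting this bound back into (U3) gives $\norm{u(t)}_Y\le a_U(t)+c_U N_0$ for a.e. $t\in[a,b]$, whence
\begin{equation*}
	\norm{u}_{L^2([a,b],Y)}^2 \le 2\norm{a_U}_{L^2([a,b],\mathbb{R}^+)}^2 + 2 c_U^2 N_0^2 (b-a),
\end{equation*}
a finite constant. Enlarging $N_0$ if necessary so that it dominates both estimates completes the argument. Throughout, the only genuine difficulty is the singular fractional kernel, dealt with by the generalized Gronwall inequality, together with the careful bookkeeping of the weight $(\Psi(t,a))^{1-\gamma}$ so that it may be bounded by a constant and factored out of the Gronwall term.
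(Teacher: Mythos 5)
Your proposal is correct and follows essentially the same route as the paper's own proof: weight the mild solution by $(\Psi(t,a))^{1-\gamma}$, bound the initial term via Proposition \ref{Prop4.3}, split the control contribution through Hypothesis (U3) into an $a_U$-piece (handled by H\"older) and a state-driven piece, close the resulting singular-kernel inequality with the generalized (fractional) Gronwall inequality to get a Mittag--Leffler bound, and then feed the state bound back into (U3) to control $\norm{u}_{L^2([a,b],Y)}$. The constants you produce (e.g. your $C_2$) coincide with the paper's $C$ and $D$, so there is nothing substantive to distinguish the two arguments.
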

\begin{proof}
	If $(q(\cdot), u(\cdot))\in C^{1-\gamma;\psi}([a,b],X)\times L^2([a,b],Y)$ is a solution of problem \eqref{4.1}-\eqref{4.2}, then we have
	\begin{equation}
		q(t)=S_{\alpha,\beta}(\Psi(t,a))x_0+\int_{a}^{t}\psi^{\prime}(s)(\Psi(t,s))^{\alpha-1}P_{\alpha}(\Psi(t,s))Bu(s)ds, t\in [a,b].
	\end{equation}
	Using the expression \eqref{4.3} we compute
	\begin{align*}
		&\norm{(\Psi(t,a))^{1-\gamma}q(t)}_X\\
		=&\norm{(\Psi(t,a))^{1-\gamma}\left[S_{\alpha,\beta}(\Psi(t,a))x_0+\int_{a}^{t}\psi^{\prime}(s)(\Psi(t,s))^{\alpha-1}P_{\alpha}(\Psi(t,s))Bu(s)ds\right]}\\
		\le &(\Psi(t,a))^{1-\gamma}\left[\norm{S_{\alpha,\beta}(\Psi(t,a))x_0}+\norm{\int_{a}^{t}\psi^{\prime}(s)(\Psi(t,s))^{\alpha-1}P_{\alpha}(\Psi(t,s))Bu(s)ds}\right].
	\end{align*}
	Invoking Proposition \ref{Prop4.3} we obtain
	\begin{equation}
		(\Psi(t,a))^{1-\gamma}\norm{S_{\alpha,\beta}(\Psi(t,a))x_0}\le \frac{M}{\Gamma(\gamma)}\norm{x_0}.
	\end{equation}
	Also, by means of Proposition \ref{Prop4.3} we get
	\begin{align}
		\norm{\int_{a}^{t}\psi^{\prime}(s)(\Psi(t,s))^{\alpha-1}P_{\alpha}(\Psi(t,s))Bu(s)ds}\\
		\le &\frac{M}{\Gamma(\alpha)}\norm{B} \int_{a}^{t}\psi^{\prime}(s)(\Psi(t,s))^{\alpha-1}\norm{u(s)}ds.
	\end{align}
	Therefore, 
	\begin{align*}
		&\norm{(\Psi(t,a))^{1-\gamma}q(t)}\le \frac{M}{\Gamma(\gamma)}\norm{x_0}\\
		&+\frac{M}{\Gamma(\alpha)}\norm{B}(\Psi(t,a))^{1-\gamma} \int_{a}^{t}\psi^{\prime}(s)(\Psi(t,s))^{\alpha-1}\norm{u(s)}ds\\
		\le& \frac{M}{\Gamma(\gamma)}\norm{x_0}+\frac{MK}{\Gamma(\alpha)}\norm{B}(\Psi(t,a))^{1-\gamma}\norm{u}_{L^2([a,b],Y)} \left(\int_{a}^{t}\psi^{\prime}(s)(\Psi(t,s))^{2(\alpha-1)}ds\right)^{\frac{1}{2}}, 
	\end{align*}
	where $K=\sup_{t \in [a,b]}\abs{\psi^{\prime}(t)}$ and we use H$\ddot{o}$lder's inequality in the last inequality. After a simple calculation, we can write
	\begin{equation}\label{4.24}
		\norm{(\Psi(t,a))^{1-\gamma}q(t)}	\le \frac{M}{\Gamma(\gamma)}\norm{x_0}+\frac{MK}{\Gamma(\alpha)}\norm{B}(\Psi(t,a))^{1-\gamma}\norm{u} \left(\frac{(\Psi(t,a))^{2\alpha-1}}{2\alpha-1}\right)^{\frac{1}{2}}.
	\end{equation}
	Noting that $\norm{u(t)}\le a_U(t)+c_U(\Psi(t,a))^{1-\gamma}\norm{q(t)}$, by Hypothesis (U3). Using this fact, we obtain from \eqref{4.24}
	\begin{align*}
		&\norm{(\Psi(t,a))^{1-\gamma}q(t)}\\
		&\le \frac{M}{\Gamma(\gamma)}\norm{x_0}+\frac{M}{\Gamma(\alpha)}\norm{B}(\Psi(t,a))^{1-\gamma} \int_{a}^{t}\psi^{\prime}(s)(\Psi(t,s))^{\alpha-1}\norm{u(s)}ds\\
		\le& \frac{M}{\Gamma(\gamma)}\norm{x_0}\\
		&+\frac{M}{\Gamma(\alpha)}\norm{B}(\Psi(t,a))^{1-\gamma}\int_{a}^{t}\psi^{\prime}(s)(\Psi(t,s))^{\alpha-1}(a_U(s)+c_U(\Psi(s,a))^{1-\gamma}\norm{q(s)})ds.
	\end{align*}
	Again, by using H$\ddot{o}$lder's inequality and after a simple manipulation, we obtain
	\begin{align*}
		&\norm{(\Psi(t,a))^{1-\gamma}q(t)}\le \frac{M}{\Gamma(\gamma)}\norm{x_0}\\
		&+\frac{M}{\Gamma(\alpha)}\norm{B}(\Psi(t,a))^{1-\gamma}\norm{a_U}_{L^2([a,b],\mathbb{R}^+)} \left(\frac{(\Psi(t,a))^{2\alpha-1}}{2\alpha-1}\right)^{\frac{1}{2}}\\
		&+\frac{Mc_U}{\Gamma(\alpha)}\norm{B}(\Psi(t,a))^{1-\gamma}\int_{a}^{t}\psi^{\prime}(s)(\Psi(t,s))^{\alpha-1}(\psi(s)-\psi(a))^{1-\gamma}\norm{q(s)}ds.
	\end{align*}
	Let $y(t)=(\Psi(t,a))^{1-\gamma}q(t)$, then we have
	\begin{equation}\label{4.25}
		\norm{y(t)}\le C+D\int_{a}^{t}\psi^{\prime}(s)(\Psi(t,s))^{\alpha-1}\norm{y(s)}ds, t\in [a,b],
	\end{equation}
	where
	\begin{equation}
		C=\frac{M}{\Gamma(\gamma)}\norm{x_0}+\frac{M}{\Gamma(\alpha)}\norm{B}(\Psi(b,a))^{1-\gamma}\norm{a_U}_{L^2([a,b],\mathbb{R}^+)} \left(\frac{(\Psi(b,a))^{2\alpha-1}}{2\alpha-1}\right)^{\frac{1}{2}}
	\end{equation} 
	and
	\begin{equation}
		D=\frac{Mc_U}{\Gamma(\alpha)}\norm{B}(\Psi(b,a))^{1-\gamma}.
	\end{equation}
	By Gronwall's inequality, we conclude from \eqref{4.25}
	\begin{equation}
		\norm{y(t)}\le CE_{\alpha}(D\Gamma(\alpha)(\Psi(t,a))^{\alpha}).
	\end{equation}
	Consequently, we obtain
	\begin{equation}
		\norm{y(t)}\le CE_{\alpha}(D\Gamma(\alpha)(\Psi(b,a))^{\alpha})=M_0~\text{say}~, t\in [a,b].
	\end{equation}
	Hence,
	\begin{equation}\label{4.26}
		\norm{q}_{C^{1-\gamma;\psi}([a,b],X)}\le M_0.
	\end{equation}
	Again, by Hypothesis (U3), we obtain
	\begin{equation}
		\norm{u(t)}\le a_U(t)+c_U(\Psi(t,a))^{1-\gamma}\norm{q(t)}, t\in [a,b].
	\end{equation}
	By means of \eqref{4.26} we obtain
	\begin{equation}
		\norm{u(t)}\le a_U(t)+c_U\norm{q}_{C^{1-\gamma;\psi}([a,b],X)}\le a_U(t)+c_UM_0, t\in [a,b].
	\end{equation}
	By H$\ddot{o}$lder's inequality we obtain
	\begin{equation}
		\norm{u}_{L^2([a,b],Y)}\le N_0, ~\text{for some}~N_0>0.
	\end{equation}
	Let $K_0=\max\{M_0, N_0\}$. Then we obtain
	\begin{equation}
		\norm{q}_{C^{1-\gamma;\psi}([a,b],X)}\le N_0, \norm{u}_{L^2([a,b],Y)}\le N_0.
	\end{equation}
\end{proof}
\begin{theorem}
	Assume Hypotheses H(T), H(U), and H(h) hold. Then Problem 1 has a solution $(q(\cdot),u(\cdot))\in C^{1-\gamma;\psi}([a,b],X)\times L^2([a,b],Y)$.
\end{theorem}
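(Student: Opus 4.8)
The plan is to apply the direct method of the calculus of variations. First I would note that the value
$m:=\inf_{(q,u)\in R_U(x_0)}J(q,u)$ is finite: by Lemma~\ref{Lem 5.6}(1) the functional $J$ is bounded below by a quantity depending only on $\norm{q}_{C^{1-\gamma;\psi}([a,b],X)}$ and $\norm{u}_{L^2([a,b],Y)}$, and Lemma~\ref{Lem 5.7} bounds both of these by $N_0$ for \emph{every} admissible pair in $R_U(x_0)$, so $m>-\infty$. I would then fix a minimizing sequence $(q_n(\cdot),u_n(\cdot))\in R_U(x_0)$ with $J(q_n,u_n)\to m$, and invoke Lemma~\ref{Lem 5.7} again to get the uniform a priori bounds $\norm{q_n}_{C^{1-\gamma;\psi}([a,b],X)}\le N_0$ and $\norm{u_n}_{L^2([a,b],Y)}\le N_0$.

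Since $Y$ is a separable Hilbert space, $L^2([a,b],Y)$ is reflexive, so the bounded sequence $\{u_n\}$ admits a subsequence (not relabelled) with $u_n\rightharpoonup u^*$ weakly in $L^2([a,b],Y)$. Writing the mild solution in the form $q_n(t)=S_{\alpha,\beta}(\Psi(t,a))x_0+(\Lambda(Bu_n))(t)$, where $\Lambda$ is the operator of Lemma~\ref{Lem 5.8}, I would use that $\Lambda\circ B:L^2([a,b],Y)\to C^{1-\gamma;\psi}([a,b],X)$ is a compact linear operator (compactness of $\Lambda$ from Lemma~\ref{Lem 5.8} composed with the bounded map $B$). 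A compact operator maps weakly convergent sequences to strongly convergent ones, so $\Lambda(Bu_n)\to\Lambda(Bu^*)$ strongly, whence $q_n\to q^*$ strongly in $C^{1-\gamma;\psi}([a,b],X)$ with $q^*(t)=S_{\alpha,\beta}(\Psi(t,a))x_0+(\Lambda(Bu^*))(t)$. In particular $q^*$ is the mild solution associated with the control $u^*$.

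The hard part is the admissibility of the limit, i.e. verifying that $u^*(t)\in U(t,q^*(t))$ for a.e. $t$, so that $(q^*,u^*)\in R_U(x_0)$. I would argue as follows. Denoting by $d(y,S)=\inf_{z\in S}\norm{y-z}$ the point-to-set distance, Hypothesis (U4) gives $d(u_n(t),U(t,q^*(t)))\le h(U(t,q_n(t)),U(t,q^*(t)))\le k_4(t)(\Psi(t,a))^{1-\gamma}\norm{q_n(t)-q^*(t)}_X$, and since $q_n\to q^*$ in $C^{1-\gamma;\psi}([a,b],X)$ the factor $(\Psi(t,a))^{1-\gamma}\norm{q_n(t)-q^*(t)}_X\to0$ uniformly in $t$. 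By Mazur's lemma there are convex combinations $v_k=\sum_j\lambda_{k,j}u_{n_j}$ with $v_k\to u^*$ strongly in $L^2([a,b],Y)$, and along a further subsequence $v_k(t)\to u^*(t)$ for a.e. $t$. Using the convexity of $U(t,q^*(t))$ from Hypothesis (U2), one gets $d(v_k(t),U(t,q^*(t)))\le\sum_j\lambda_{k,j}\,d(u_{n_j}(t),U(t,q^*(t)))\to0$, hence $d(u^*(t),U(t,q^*(t)))=0$; since $U(t,q^*(t))$ is closed by (U2), this forces $u^*(t)\in U(t,q^*(t))$ a.e. Thus $(q^*,u^*)\in R_U(x_0)$.

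Finally I would close by lower semicontinuity. By Lemma~\ref{Lem 5.6}(2) the map $(q,u)\mapsto J(q,u)$ is strongly-weakly lower semicontinuous, and since $q_n\to q^*$ strongly while $u_n\rightharpoonup u^*$ weakly, it follows that $J(q^*,u^*)\le\liminf_{n\to\infty}J(q_n,u_n)=m$. As $(q^*,u^*)\in R_U(x_0)$ also gives $J(q^*,u^*)\ge m$, we conclude $J(q^*,u^*)=m$, so $(q^*,u^*)$ solves Problem~1. I expect the feasibility step, namely the stability of the pointwise constraint $u^*(t)\in U(t,q^*(t))$ under weak limits, to be the delicate point, and it is precisely there that the convexity (U2) and the Hausdorff--Lipschitz continuity (U4) of $U$ are indispensable.
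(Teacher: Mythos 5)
Your proposal is correct, and its skeleton coincides with the paper's: direct method with a minimizing sequence, a priori bounds from Lemma \ref{Lem 5.7}, weak $L^2$-compactness of the controls, compactness of the solution operator $\Lambda$ of Lemma \ref{Lem 5.8} to upgrade $q_n$ to strong convergence in $C^{1-\gamma;\psi}([a,b],X)$, Mazur's theorem for feasibility of the limit, and the strong--weak lower semicontinuity of Lemma \ref{Lem 5.6}(2) to conclude. The one place you genuinely diverge is the feasibility step: the paper combines Mazur's theorem with the ``property Q'' of the multimap $x\mapsto U(t,x)$ (deduced from the Hausdorff continuity in (U4) via Proposition 1.2.86 of the Papageorgiou--Hu handbook), obtaining $\bigcap_{n}\overline{conv}\bigl(\bigcup_{k\ge n}u_k(t)\bigr)\subset \overline{conv}\,U(t,q_0(t))$, whereas you argue directly: the excess bound $d(u_n(t),U(t,q^*(t)))\le k_4(t)(\Psi(t,a))^{1-\gamma}\norm{q_n(t)-q^*(t)}_X$ plus convexity of the distance function to the convex set $U(t,q^*(t))$ lets the Mazur combinations inherit vanishing distance. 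Your version is more elementary and self-contained (no external property-Q citation), at the cost of two small points you should make explicit: the Mazur convex combinations must be drawn from tails $\{u_n: n\ge k\}$ so that $\sum_j\lambda_{k,j}\,d(u_{n_j}(t),U(t,q^*(t)))\to 0$, and the final implication $d(u^*(t),U(t,q^*(t)))=0\Rightarrow u^*(t)\in U(t,q^*(t))$ needs $U(t,x)$ to have \emph{closed} values --- a property not literally stated in (U2), though the paper's own argument (passing from $\overline{conv}\,U(t,q_0(t))$ to $U(t,q_0(t))$) tacitly requires it as well.
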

\begin{proof}
	Applying Lemma \ref{Lem 5.6}, we can see that $J$ is bounded from below on $R_U(x_0)$. Let $\{(q_n(\cdot), u_n(\cdot))\}\subset R_U(x_0)$ be a minimizing sequence of Problem 1, that is.
	\begin{equation}
		\inf_{(q(\cdot),u(\cdot))\in R_U(x_0)} J(q(\cdot), u(\cdot))=\lim_{n\to \infty} J(q_n(\cdot),u_n(\cdot)).
	\end{equation}
	For each $n\in \mathbb{N}$, we have
	\begin{equation}
		\begin{cases}
			^{H}D^{\alpha,\beta;\psi}_{a+} q_n(t)=Aq_n(t)+Bu_n(t), t\in [a,b]\\
			(I_{a+}^{1-\gamma;\psi}q_n)(a)=x_0,
		\end{cases}
	\end{equation}
	where $u_n(t)\in U(t,q_n(t))$ for a.e. $t\in [a,b]$.  Thus
	\begin{equation}\label{4.37}
		q_n(t)=S_{\alpha,\beta}(\Psi(t,a))x_0+\int_{a}^{t}\psi^{\prime}(s)(\Psi(t,s))^{\alpha-1}P_{\alpha}(\Psi(t,s))Bu_n(s)ds, t\in [a,b].
	\end{equation}
	The boundedness of $\{u_n(\cdot)\}$ and the fact that $B$ is bounded linear, allow us to assume that
	\begin{equation}
		u_n\to u~\text{weakly in}~L^2([a,b],Y)~\text{and}~Bu_n\to Bu~\text{weakly in}~L^2([a,b],X).
	\end{equation}
	Denote by $y_n(t)=(\Psi(t,a))^{1-\gamma}q_n(t)$. The set $$\{y\in C([a,b],X): y(t)=(\Psi(t,a))^{1-\gamma}q(t), q\in C^{1-\gamma;\psi}([a,b],X)\}$$ is equicontinuous and relatively compact subset of $C([a,b],X)$. Hence we can deduce that $\{q_n(\cdot)\}$ is relatively compact on $C^{1-\gamma;\psi}([a,b],X)$. Therefore, there exists a function $q_0\in C^{1-\gamma;\psi}([a,b],X)$ such that
	\begin{equation}
		q_{n_k}\to q_0~\text{as}~k\to \infty~\text{in}~C^{1-\gamma;\psi}([a,b],X).
	\end{equation} 
	Therefore, by passing limit in \eqref{4.37} and invoking Lemma \ref{Lem 5.8} we obtain
	\begin{equation}
		q_n(t)\to q(t)= S_{\alpha,\beta}(\Psi(t,a))x_0+\int_{a}^{t}\psi^{\prime}(s)(\Psi(t,s))^{\alpha-1}P_{\alpha}(\Psi(t,s))Bu(s)ds, t\in [a,b].
	\end{equation}
	By the fact that $u_n\to u$ weakly in $L^2([a,b],Y)$, Mazur's Theorem implies
	\begin{equation}\label{4.41}
		u(t)\in \bigcap_{n=1}^{\infty}\overline{conv}~\left(\bigcup_{k=n}^{\infty}u_k(t)\right)~\text{a.e.}~t\in [a,b].
	\end{equation} 
	Thanks to Hypothesis (U4), the map $x\to U(t,x)$ is Hausdorff continuous a.e. $t\in [a,b]$. Then Proposition 1.2.86 in \cite{papageorgiou1997handbook} implies, the map $x\to U(t,x)$ has property Q a.e. $t\in [a,b]$. Hence we have
	\begin{equation}\label{4.42}
		\bigcap_{n=1}^{\infty}\overline{conv}~\left(\bigcup_{k=n}^{\infty}u_k(t)\right)\subset \overline{conv}~U(t,x_0(t)) ~\text{for a.e.}~t\in [a,b].
	\end{equation}
	According to \eqref{4.41} and \eqref{4.42} we obtain that $u(t)\in U(t,q_0(t))$ a.e. $t\in [a,b]$. 
	
	This shows that $q(\cdot)\in C^{1-\gamma;\psi}([a,b],X)$ is a solution of Problem \eqref{4.1} and hence $(q(\cdot), u(\cdot))\in R_U(x_0)$.

	We will prove that $(q(\cdot),u(\cdot))$ is a solution to Problem 1. We compute
	\begin{align*}
		J=&\inf_{(q(\cdot),u(\cdot))\in R_U(x_0)} J(q(\cdot),u(\cdot))=\lim_{n\to \infty} J(q_n(\cdot),u_n(\cdot))=\lim_{n\to \infty} \int_{a}^{b}h(t,q_n(t),u_n(t))dt\\
		\ge &\int_{a}^{b}h(t,q(t),u(t))dt= J(q(\cdot),u(\cdot))\ge \inf_{(q(\cdot),u(\cdot))\in R_U(x_0)} J(q(\cdot),u(\cdot))=J.
	\end{align*}
	This shows that $(q(\cdot), u(\cdot))$ is a solution to Problem 1.
\end{proof}
We now consider the optimal control problem for minimizing the cost functional $J: C^{1-\gamma;\psi}[a,b]\times L^2([a,b], Y)\to \mathbb{R}$ given by
\begin{equation}\label{4.43}
	J(q(\cdot),u(\cdot))=\norm{q(b)-x_b}_X^2+\lambda \int_{a}^{b}\norm{u(t)}_U^2dt,
\end{equation}
where $x_b\in X, \lambda>0$ and $q(\cdot)$ is the unique mild solution of the fractional linear control problem involving $\psi$-Hilfer fractional derivative given by
\begin{equation}
	\begin{cases}
		^{H}D^{\alpha,\beta;\psi}_{a+} q(t)=Aq(t)+Bu(t), t\in [a,b]\\
		(I_{a+}^{1-\gamma;\psi}q)(a)=x_0.
	\end{cases}
\end{equation}
Since $Bu(\cdot)\in L^1([a,b],X)$, the system \eqref{4.1} has a unique mild solution\\ $q(\cdot)\in C^{1-\gamma;\psi}[a,b]$ given by
\begin{equation}
	q(t)=S_{\alpha,\beta}(\Psi(t,a))x_0+\int_{a}^{t}\psi^{\prime}(s)(\Psi(t,s))^{\alpha-1}P_{\alpha}(\Psi(t,s))Bu(s)ds, t\in [a,b],
\end{equation}
for any $u\in L^2([a,b],Y)$. Let
\begin{equation}
	A_{ad}=\{(q(\cdot), u(\cdot)): q(\cdot)~\text{is the unique mild solution of} ~\eqref{4.1}~\text{ with}~u(\cdot)\in L^2([a,b],Y )\}
\end{equation}
be the admissible class for the system \eqref{4.1}. Since we know that, for any given control $u(\cdot)\in L^2([a,b], Y)$, the system \eqref{4.1} has a unique mild solution, which implies that the set $A_{ad}$ is non-empty. We now formulate the optimal control problem as
\begin{equation}\label{4.47}
	\min_{(q(\cdot),u(\cdot))\in A_{ad}} J(q(\cdot),u(\cdot)).
\end{equation}
\begin{theorem}
	For a given $x_0\in X$ and fixed $\frac{1}{2}<\alpha\le1$, there exists a unique optimal pair $(q^0(\cdot), u^0(\cdot))\in A_{ad}$ for the problem \eqref{4.47}.
\end{theorem}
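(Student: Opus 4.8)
The plan is to reduce the problem to a minimization over the control variable alone, establish existence by the direct method of the calculus of variations, and obtain uniqueness from strict convexity. First I would note that for each $u\in L^2([a,b],Y)$ the state $q$ is uniquely determined, and in particular
\begin{equation*}
	q(b)=S_{\alpha,\beta}(\Psi(b,a))x_0+(\Lambda(Bu))(b),
\end{equation*}
where $\Lambda$ is the operator of Lemma~\ref{Lem 5.8}. Writing $c_0=S_{\alpha,\beta}(\Psi(b,a))x_0$ and $Lu=(\Lambda(Bu))(b)$, the map $L:L^2([a,b],Y)\to X$ is bounded linear (its boundedness follows from Proposition~\ref{Prop4.3} and Lemma~\ref{tech}, together with the continuity of evaluation at $b$), so the cost reduces to the functional of $u$ alone
\begin{equation*}
	\widetilde J(u)=\norm{Lu+c_0-x_b}_X^2+\lambda\norm{u}_{L^2([a,b],Y)}^2.
\end{equation*}
Both summands are nonnegative, so $\widetilde J$ is bounded below and $m:=\inf_{A_{ad}}J\ge 0$ is finite.

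For existence I would take a minimizing sequence $\{(q_n,u_n)\}\subset A_{ad}$ with $J(q_n,u_n)\to m$. From $\lambda\norm{u_n}_{L^2([a,b],Y)}^2\le J(q_n,u_n)$ the sequence $\{u_n\}$ is bounded in the Hilbert space $L^2([a,b],Y)$, so after passing to a subsequence we may assume $u_n\rightharpoonup u^0$ weakly. Since $B$ is bounded linear, $Bu_n\rightharpoonup Bu^0$ weakly in $L^2([a,b],X)$, and because $\Lambda$ is compact by Lemma~\ref{Lem 5.8}, it carries this weakly convergent sequence to a strongly convergent one; hence $q_n\to q^0$ in $C^{1-\gamma;\psi}([a,b],X)$ and in particular $q_n(b)\to q^0(b)$ in $X$. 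Passing to the limit, the tracking term converges while the control term is weakly lower semicontinuous, so that
\begin{equation*}
	J(q^0,u^0)\le\liminf_{n\to\infty}J(q_n,u_n)=m.
\end{equation*}
As $(q^0,u^0)\in A_{ad}$ forces $J(q^0,u^0)\ge m$, the pair $(q^0,u^0)$ is optimal.

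For uniqueness I would exploit strict convexity of $\widetilde J$. The map $u\mapsto Lu+c_0-x_b$ is affine, so $u\mapsto\norm{Lu+c_0-x_b}_X^2$ is convex, whereas $u\mapsto\lambda\norm{u}_{L^2([a,b],Y)}^2$ is strictly convex for $\lambda>0$; their sum is therefore strictly convex and admits at most one minimizer. Consequently the optimal control $u^0$ is unique, and with it the optimal state $q^0$. The main obstacle is the passage to the limit in the endpoint term $\norm{q_n(b)-x_b}_X^2$: weak convergence of the controls alone would give only lower semicontinuity there, and it is precisely the compactness of $\Lambda$ (Lemma~\ref{Lem 5.8}) that upgrades this to strong convergence of $q_n(b)$, making the tracking term continuous along the minimizing sequence; the remaining estimates are routine.
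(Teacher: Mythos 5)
Your proposal is correct and follows essentially the same route as the paper: a minimizing sequence of controls bounded in $L^2([a,b],Y)$, weak convergence after extraction, the compactness of the solution operator $\Lambda$ (Lemma \ref{Lem 5.8}) to upgrade the states to strong convergence in $C^{1-\gamma;\psi}([a,b],X)$, weak lower semicontinuity of the cost to pass to the limit, and convexity for uniqueness. Your uniqueness step is in fact slightly sharper than the paper's, which merely asserts uniqueness from convexity of $J$, linearity of the constraint, and convexity of the admissible class, whereas you correctly isolate the strict convexity of the term $\lambda\norm{u}_{L^2([a,b],Y)}^2$ as the ingredient that forces the minimizer to be unique.
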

\begin{proof}
	Let us assume 
	\begin{equation}
		J=\inf_{u(\cdot)\in L^2([a,b],Y)} J(q(\cdot),u(\cdot)).
	\end{equation}
	Since $0\le J<\infty$, there exists a minimizing sequence $\{u^n(\cdot)\}_{n\ge 1}\in L^2([a,b],Y)$ such that
	\begin{equation}
		\lim_{n\to \infty} J(q^n(\cdot), u^n(\cdot))=J.
	\end{equation}
	Here $q^n(\cdot)$ denotes the unique mild solution of the system \eqref{4.1} with the control $u^n(\cdot)$ for each $n\in \mathbb{N}$ with $I_{a+}^{1-\gamma;\psi} q^n(a)=x_0$. Thus
	\begin{equation}\label{4.50}
		q^n(t)=S_{\alpha,\beta}(\Psi(t,a))x_0+\int_{a}^{t}\psi^{\prime}(s)(\Psi(t,s))^{\alpha-1}P_{\alpha}(\Psi(t,s))Bu^n(s)ds, t\in [a,b].
	\end{equation}
	Since $0\in L^2([a,b],Y)$, without loss of generality, we may assume that $J(q^n(\cdot), u^n(\cdot))\le J(q(\cdot),0)$, where $(q(\cdot),0)\in A_{ad}$. By using the definition of $J(\cdot,\cdot)$, we easily estimate
	\begin{equation}
		\norm{q^n(b)-x_b}^2+\lambda\int_{a}^{b}\norm{u^n(t)}_U^2dt\le \norm{q(b)-x_b}_X^2\le 2(\norm{q(b)}^2+\norm{x_b}^2)<\infty.
	\end{equation}
	From the above fact, it is clear that there exists a large $L>0$ (independent of $n$) such that
	\begin{equation}\label{4.52}
		\int_{a}^{b}\norm{u^n(t)}_U^2dt\le L<\infty.
	\end{equation} 
	Using the expression \eqref{4.50} we compute
	\begin{align*}
		&\norm{(\Psi(t,a))^{1-\gamma}q^n(t)}_X\\
		=&\norm{(\Psi(t,a))^{1-\gamma}\left[S_{\alpha,\beta}(\Psi(t,a))x_0+\int_{a}^{t}\psi^{\prime}(s)(\Psi(t,s))^{\alpha-1}P_{\alpha}(\Psi(t,s))Bu^n(s)ds\right]}\\
		\le &(\Psi(t,a))^{1-\gamma}\left[\norm{S_{\alpha,\beta}(\Psi(t,a))x_0}+\norm{\int_{a}^{t}\psi^{\prime}(s)(\Psi(t,s))^{\alpha-1}P_{\alpha}(\Psi(t,s))Bu^n(s)ds}\right]
	\end{align*}
	Now,
	\begin{equation}
		(\Psi(t,a))^{1-\gamma}\norm{S_{\alpha,\beta}(\Psi(t,a))x_0}\le \frac{M}{\Gamma(\gamma)}\norm{x_0}.
	\end{equation}
	Also,
	\begin{align*}
		&	\norm{\int_{a}^{t}\psi^{\prime}(s)(\Psi(t,s))^{\alpha-1}P_{\alpha}(\Psi(t,s))Bu^n(s)ds}\\
		\le &\int_{a}^{t}\psi^{\prime}(s)(\Psi(t,s))^{\alpha-1}\norm{P_{\alpha}(\Psi(t,s))Bu^n(s)}ds\\
		\le &\frac{M}{\Gamma(\alpha)}\norm{B} \int_{a}^{t}\psi^{\prime}(s)(\Psi(t,s))^{\alpha-1}\norm{u^n(s)}ds\\
	\end{align*}
	Therefore,
	\begin{align*}
		&\norm{(\Psi(t,a))^{1-\gamma}q^n(t)}\le \frac{M\norm{x_0}}{\Gamma(\gamma)}\\
		&\hspace{2cm}+\frac{M\norm{B}}{\Gamma(\alpha)}(\Psi(t,a))^{1-\gamma} \int_{a}^{t}\psi^{\prime}(s)(\Psi(t,s))^{\alpha-1}\norm{u^n(s)}ds\\
		\le& \frac{M}{\Gamma(\gamma)}\norm{x_0}+\frac{MK}{\Gamma(\alpha)}\norm{B}(\Psi(t,a))^{1-\gamma}\norm{u^n}_{L^2([a,b],Y)} \left(\int_{a}^{t}\psi^{\prime}(s)(\Psi(t,s))^{2(\alpha-1)}ds\right)^{\frac{1}{2}}\\
		\le& \frac{M}{\Gamma(\gamma)}\norm{x_0}+\frac{MK}{\Gamma(\alpha)}\norm{B}(\Psi(t,a))^{1-\gamma}\norm{u^n}_{L^2([a,b],Y)} \left(\frac{(\Psi(t,a))^{2\alpha-1}}{2\alpha-1}\right)^{\frac{1}{2}}.
	\end{align*}
	In the above expression $K=\sup_{t\in [a,b]}\abs{\psi^{\prime}(t)}$. Therefore we have
	\begin{equation}
		\norm{q^n}_{C^{1-\gamma;\psi}[a,b]}<\infty, ~\text{for}~ \frac{1}{2}<\alpha\le1.
	\end{equation}
	Denote by $y_n(t)=(\Psi(t,a))^{1-\gamma}q^n(t)$.  We can deduce that $\{q^n(\cdot)\}$ is relatively compact on $C^{1-\gamma;\psi}([a,b],X)$. Therefore, there exists a function $q^0\in C^{1-\gamma;\psi}([a,b],X)$ such that
	\begin{equation}
		q^{n_k}(\cdot)\to q^0(\cdot)~\text{as}~k\to \infty.
	\end{equation} 
	The estimate \eqref{4.52} implies that the sequence $\{u^n(\cdot)\}_{n\ge 1}$ is uniformly bounded in $L^2([a,b],Y)$. By the application of Banach Alaouglu theorem, we always find a subsequence, say $\{u^{n_k}(\cdot)\}$ of $\{u^n(\cdot)\}$ such that
	\begin{equation}
		u^{n_k}(\cdot)\to u^0(\cdot) ~\text{in}~L^2([a,b],Y), ~\text{as}~k\to \infty.
	\end{equation}
	Since the operator $B$ is bounded from $Y$ to $X$, then we have
	\begin{equation}
		Bu^{n_k}\to Bu^0~\text{in}~L^2([a,b],X)~\text{as}~k\to \infty.
	\end{equation}
	Moreover, by using the above convergences together with the compactness of the operator $Q: L^2([a,b], X)\to C^{1-\gamma;\psi}([a,b],X)$, given by
	\begin{equation}
		(Qf)(t)=\int_{a}^{t}\psi^{\prime}(s)(\Psi(t,s))^{\alpha-1}P_{\alpha}(\Psi(t,s))f(s)ds, t\in [a,b],
	\end{equation}
	we obtain
	\begin{align*}
		&(\Psi(t,a))^{1-\gamma}\norm{\int_{a}^{t}\psi^{\prime}(s)(\Psi(t,s))^{\alpha-1}P_{\alpha}(\Psi(t,s))Bu^{n_k}(s)ds\right.\\
			&\left.-\int_{a}^{t}\psi^{\prime}(s)(\Psi(t,s))^{\alpha-1}P_{\alpha}(\Psi(t,s))Bu^0(s)ds}\to 0~\text{as}~k\to \infty, \forall t\in [a,b].
	\end{align*}
	We now estimate
	\begin{align*}
		&(\Psi(t,a))^{1-\gamma}\norm{q^{n_k}(t)-q^*(t)}\\
		=&(\Psi(t,a))^{1-\gamma}\norm{\int_{a}^{t}\psi^{\prime}(s)(\Psi(t,s))^{\alpha-1}P_{\alpha}(\Psi(t,s))Bu^{n_k}(s)ds\right.\\
			&\left.-\int_{a}^{t}\psi^{\prime}(s)(\Psi(t,s))^{\alpha-1}P_{\alpha}(\Psi(t,s))Bu^0(s)ds}\to 0~\text{as}~k\to \infty, \forall t\in [a,b].
	\end{align*}
	In the above
	\begin{equation}
		q^*(t)=S_{\alpha,\beta}(\Psi(t,a))x_0+\int_{a}^{t}\psi^{\prime}(s)(\Psi(t,s))^{\alpha-1}P_{\alpha}(\Psi(t,s))Bu^0(s)ds, t\in [a,b],
	\end{equation}
	It is clear by the above expression the function $q^*(\cdot)\in C^{1-\gamma;\psi}([a,b],X)$ is the unique mild solution of the equation \eqref{4.1} with the control $u^0(\cdot)\in L^2([a,b], Y)$. Since the weak limit is unique, then by combining the convergences, we obtain that $q^*(t)=q^0(t), \forall t\in [a,b]$. Hence the function $q^0(\cdot)$ is the unique mild solution of the system \eqref{4.1} with the control $u^0(\cdot)\in L^2([a,b], Y)$ and also the whole sequence $q^n(\cdot)\to q^0(\cdot)\in C^{1-\gamma;\psi}([a,b], X)$. Consequently, we have $(q^0(\cdot),u^0(\cdot))\in A_{ad}$. 
	
	It remains to show that the functional $J(\cdot,\cdot)$ attains its minimum at $(q^0(\cdot),u^0(\cdot))$, that is, $J=J(q^0(\cdot),u^0(\cdot))$. Since the cost functional $J(\cdot,\cdot)$ given in  is convex and continuous on $C^{1-\gamma;\psi}([a,b],X)\times L^2([a,b],Y)$, it follows that $J(\cdot,\cdot)$ is sequentially weakly lower semicntinuous. That is, 
	\begin{equation}
		(q^n(\cdot),u^n(\cdot))\rightharpoonup (q^0(\cdot),u^0(\cdot))~\text{in}~C^{1-\gamma;\psi}([a,b],X)\times L^2([a,b],Y)~\text{as}~n\to \infty 
	\end{equation}
	$~\text{implies}~J(q^0(\cdot),u^0(\cdot))\le \liminf_{n\to \infty}J(q^n(\cdot),u^n(\cdot))$.
	Hence we obtain
	\begin{equation}
		J\le J(q^0(\cdot),u^0(\cdot))\le \liminf_{n\to \infty}J(q^n(\cdot),u^n(\cdot))=\lim_{n\to \infty} J(q^n(\cdot), u^n(\cdot))=J,
	\end{equation}
	and thus, $(q^0(\cdot),u^0(\cdot))$ is a minimizer of the problem \eqref{4.47}. Note that the cost functional defined in  is convex, the constraint given in \eqref{4.1} is linear, and the admissible class $L^2([a,b], Y)$ is convex, then the optimal control obtained above is unique. 
\end{proof}
\begin{lemma}
	If $u(\cdot)$ is the optimal control satisfying the problem \eqref{4.43}, then $u(\cdot)$ is given by
	\begin{equation}
		u(t)=\psi^{\prime}(t)(\Psi(b,t))^{\alpha-1}B^*P^*_{\alpha}(\Psi(b,t))J(\lambda I +R(b)J)^{-1}[x_b-S_{\alpha,\beta}(\Psi(b,a))x_0], t\in [a,b].
	\end{equation}
\end{lemma}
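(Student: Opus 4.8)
The plan is to treat \eqref{4.47} as an unconstrained quadratic minimization over $L^2([a,b],Y)$ and to read off the announced formula from its first-order (Euler--Lagrange) optimality condition. First I would isolate the dependence of the terminal state on the control: evaluating the mild solution at $t=b$ gives $q(b)=S_{\alpha,\beta}(\Psi(b,a))x_0+Lu$, where $L:L^2([a,b],Y)\to X$ is the bounded linear operator
\begin{equation*}
	Lu=\int_{a}^{b}\psi'(s)(\Psi(b,s))^{\alpha-1}P_{\alpha}(\Psi(b,s))Bu(s)\,ds .
\end{equation*}
Writing $z=S_{\alpha,\beta}(\Psi(b,a))x_0-x_b$, the cost becomes $\norm{Lu+z}_X^2+\lambda\norm{u}_{L^2}^2$, and the existence and uniqueness of a minimizer $u$ is exactly the content of the preceding theorem. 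It therefore suffices to characterize this minimizer.

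Next I would differentiate. Since $X$ is reflexive and has been renormed to be strictly convex together with $X^*$, the map $x\mapsto\norm{x}_X^2$ is G\^ateaux differentiable with derivative $2J$, where $J$ is the (single-valued, demicontinuous) duality map. Computing the G\^ateaux derivative of the cost in a direction $v$ and using the adjoint $L^*:X^*\to L^2([a,b],Y)$, which by the Hilbert structure of $Y$ is
\begin{equation*}
	(L^*\phi)(t)=\psi'(t)(\Psi(b,t))^{\alpha-1}B^*P_{\alpha}^*(\Psi(b,t))\phi ,
\end{equation*}
the stationarity condition $L^*J(Lu+z)+\lambda u=0$ emerges. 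Since $Lu+z=q(b)-x_b$, this already yields the pointwise relation
\begin{equation*}
	u(t)=-\tfrac{1}{\lambda}\psi'(t)(\Psi(b,t))^{\alpha-1}B^*P_{\alpha}^*(\Psi(b,t))J(q(b)-x_b).
\end{equation*}

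The remaining step is to eliminate the implicit dependence on $q(b)$. Substituting the expression for $u$ back into $q(b)=S_{\alpha,\beta}(\Psi(b,a))x_0+Lu$ collapses the integral into the operator $R(b)$ defined in the statement, giving $q(b)=S_{\alpha,\beta}(\Psi(b,a))x_0-\tfrac{1}{\lambda}R(b)J(q(b)-x_b)$. Setting $\xi=q(b)-x_b$ and $w=x_b-S_{\alpha,\beta}(\Psi(b,a))x_0$, this rearranges to $(\lambda I+R(b)J)\xi=-\lambda w$, whence $\xi=-\lambda(\lambda I+R(b)J)^{-1}w$. Using that the duality map is positively homogeneous and odd, $J\xi=-\lambda J\big((\lambda I+R(b)J)^{-1}w\big)$, and feeding this into the pointwise formula above produces precisely the claimed expression, the two factors of $-\lambda$ cancelling.

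The main obstacle is to make the inversion $(\lambda I+R(b)J)^{-1}$ legitimate, since $\lambda I+R(b)J$ acts nonlinearly (through $J$) on the reflexive space $X$. I would handle this through monotone operator theory: substituting $\eta=J\xi$ turns the equation into $\lambda J_*\eta+R(b)\eta=-w$, with $J_*$ the duality map of $X^*$, both $J_*$ and $R(b)$ mapping $X^*\to X=(X^*)^*$. The operator $R(b)$ is monotone because $\langle R(b)\phi,\phi\rangle=\int_{a}^{b}[\psi'(s)(\Psi(b,s))^{\alpha-1}]^2\norm{B^*P_{\alpha}^*(\Psi(b,s))\phi}_Y^2\,ds\ge 0$, while $\lambda J_*$ is strictly monotone, coercive and demicontinuous; hence the sum is a strictly monotone, coercive, demicontinuous operator and the Browder--Minty theorem gives a unique solution, so $(\lambda I+R(b)J)^{-1}$ is well defined on all of $X$. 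The only other point requiring care is the justification that the derivative of $\norm{\cdot}_X^2$ is $2J$ and that vanishing of the G\^ateaux derivative is both necessary and sufficient, which follows from convexity of the cost together with the strict convexity of the renormed space.
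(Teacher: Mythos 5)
Your proposal follows essentially the same route as the paper's proof: both take the first variation of the cost, use that the G\^ateaux derivative of $\tfrac{1}{2}\norm{\cdot}_X^2$ is the duality map $J$, obtain the pointwise stationarity condition $\psi'(t)(\Psi(b,t))^{\alpha-1}B^*P_\alpha^*(\Psi(b,t))J(q(b)-x_b)+\lambda u(t)=0$ via the adjoint, substitute back into the terminal state to produce $R(b)$, and then solve for $u$ using the homogeneity and oddness of $J$. The only genuine addition is your Browder--Minty argument making the inversion $(\lambda I+R(b)J)^{-1}$ legitimate --- a point the paper invokes without any justification --- which is a worthwhile supplement to, rather than a departure from, the paper's argument.
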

\begin{proof}
	Let us first consider the functional 
	\begin{equation}
		I(\epsilon)=J(q_{u+\epsilon w}(\cdot), u+\epsilon w(\cdot)),
	\end{equation}
	where $(q(\cdot),u(\cdot))$ is the optimal solution of \eqref{4.43} and $w(\cdot)\in L^2([a,b],Y)$. Also, the function $q_{u+\epsilon w}(\cdot)$ is the unique mild solution of \eqref{4.1} corresponding to the control $u+\epsilon w$. Then, it is immediate that
	\begin{equation}
		q_{u+\epsilon w}(t)=S_{\alpha,\beta}(\Psi(t,a))x_0+\int_{a}^{t}\psi^{\prime}(s)(\Psi(t,s))^{\alpha-1}P_{\alpha}(\Psi(t,s))B(u+\epsilon w)(s)ds, t\in [a,b],
	\end{equation}
	$\epsilon=0$ is the critical point of $I(\epsilon)$. We now evaluate the first variation of the cost functional $J$ defined in.

	Since $X$ is a separable reflexive Banach space with a strictly convex dual $X^*$,then the fact 8.12 in \cite{fabian2001functional} ensures that the norm $\norm{\cdot}_X$ is Gateaux differentiable. Moreover, the Gatauex derivative $\partial_x$ of the function $\phi(x)=\frac{1}{2}\norm{x}_X^2$ is the duality map, that is,
	\begin{equation}
		\langle \partial_x \phi(x),y\rangle=\frac{1}{2}\frac{d}{d\epsilon}\norm{x+\epsilon y}_X^2|_{\epsilon=0}=\langle J[x],y\rangle.
	\end{equation}
	Then we compute
	\begin{align*}
		&\frac{d}{d\epsilon}I(\epsilon)|_{\epsilon=0}\\=&\frac{d}{d\epsilon}\left[\norm{q_{u+\epsilon w}(b)-x_b}^2+\lambda\int_{a}^{b}\norm{u(t)+\epsilon w(t)}^2dt\right]_{\epsilon=0}\\
		=&2\left[\langle J(q_{u+\epsilon w}(b)-x_b), \frac{d}{d\epsilon}(q_{u+\epsilon w}-x_b)\rangle\right.\\
		&\left.\hspace{4cm}+2\lambda \int_{a}^{b}\langle u(t)+\epsilon w(t), \frac{d}{d\epsilon}(u(t)+\epsilon w(t))\rangle dt\right]_{\epsilon=0}\\
		=&2\langle J(q(b)-x_b), \int_{a}^{b}\psi^{\prime}(s)(\Psi(b,s))^{\alpha-1}P_{\alpha}(\Psi(b,s))Bu(s)ds\rangle+2\lambda \int_{a}^{b}\langle u(t), w(t)\rangle dt.
	\end{align*}
	By taking the first variation of the cost functional as zero, we deduce that
	\begin{equation}
		\langle J(q(b)-x_b), \int_{a}^{b}\psi^{\prime}(s)(\Psi(b,s))^{\alpha-1}P_{\alpha}(\Psi(b,s))Bw(s)ds\rangle+\lambda \int_{a}^{b}\langle u(t), w(t)\rangle dt=0.
	\end{equation}
	Consequently,
	\begin{equation}
		\int_{a}^{b}\psi^{\prime}(s)(\Psi(b,s))^{\alpha-1}\langle J(q(b)-x_b), P_{\alpha}(\Psi(b,s))Bw(s)\rangle ds+\lambda \int_{a}^{b}\langle u(t), w(t)\rangle dt=0.
	\end{equation}
	and hence
	\begin{equation}
		\int_{a}^{b}\langle\psi^{\prime}(s)(\Psi(b,s))^{\alpha-1} B^*P_{\alpha}(\Psi(b,s))^*J(q(b)-x_b)+\lambda u(s),w(s)\rangle ds=0. 
	\end{equation}
	Since $w\in L^2([a,b],Y)$ is arbitrary element, we can choose $w$ to be 
	\begin{equation}
		\psi^{\prime}(s)(\Psi(b,s))^{\alpha-1} B^*P_{\alpha}(\Psi(b,s))^*J(q(b)-x_b)+\lambda u(s), s\in [a,b].
	\end{equation}
	Then it follows that the optimal control is given by
	\begin{equation}
		u(t)=-\frac{1}{\lambda}\psi^{\prime}(t)(\psi(b)-\psi(t))^{\alpha-1}B^*P_{\alpha}^*(\psi(b)-\psi(t))J(q(b)-x_b), t\in [a,b].
	\end{equation}
	Using the above expression of the control, we find that
	\begin{align*}
		q(b)=&S_{\alpha,\beta}(\Psi(b,a))x_0\\
		&-\frac{1}{\lambda}\int_{a}^{b}\left\{\psi^{\prime}(s)(\Psi(b,s))^{\alpha-1}\right\}^2P_{\alpha}(\Psi(b,s))BB^*P_{\alpha}^*(\Psi(b,s))J(q(b)-x_b)ds\\
		=&S_{\alpha,\beta}(\Psi(b,a))x_0-\frac{1}{\lambda}R(b)J(q(b)-x_b),
	\end{align*}
	where
	\begin{equation}
		R(b)=\int_{a}^{b}\left\{\psi^{\prime}(s)(\Psi(b,s))^{\alpha-1}\right\}^2P_{\alpha}(\Psi(b,s))BB^*P_{\alpha}^*(\Psi(b,s))ds.
	\end{equation}
	From there, we get the expression of the optimal control as
	\begin{equation}
		u(t)=\psi^{\prime}(t)(\Psi(b,t))^{\alpha-1}B^*P^*_{\alpha}(\Psi(b,t))J(\lambda I +R(b)J)^{-1}[x_b-S_{\alpha,\beta}(\Psi(b,a))x_0], t\in [a,b].
	\end{equation}
\end{proof}
\section{Approximate Controllability}
We now derive the approximate controllability problems.
We consider the fractional evolution inclusion involving $\psi$-Hilfer fractional derivative
\begin{equation}\label{5.1}
	\begin{cases}
		^{H}D^{\alpha,\beta;\psi}_{a+} q(t)\in Aq(t)+F(t,q(t))+Bu(t), t\in [a,b]\\
		(I_{a+}^{1-\gamma;\psi}q)(a)=x_0.
	\end{cases}
\end{equation}
Here $A$ is a closed linear operator generating a strongly continuous semigroup $\{T(t)\}$ of bounded linear operators defined on a Banach space $X$. $^{H}D^{\alpha,\beta;\psi}_{a+}$ is the left $\psi$-Hilfer fractional derivative, $F:[a,b]\times X\multimap X$ is a given multivalued map and $B: Y\to X$ is a bounded linear map that describes the control action, $Y$ being a separable Hilbert space. 
\begin{theorem}\label{Thm3.1}
	Assume the hypotheses (T) and (C) hold. Also, suppose that the multivalued nonlinearity $F$ satisfies conditions (F1)-(F4). Then the system \eqref{5.1} is approximately controllable in $[a,b]$.
\end{theorem}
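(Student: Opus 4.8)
The plan is to fix $\epsilon>0$, collapse the nonconvex multimap $\Gamma_\epsilon$ to a single-valued continuous operator, produce a fixed point $q_\epsilon$ via Schauder's fixed point theorem (stated above), and then let $\epsilon\to0^+$ to force $q_\epsilon(b)\to x_1$. Since $F(t,\cdot)$ has closed convex values (F2) and is lower semicontinuous (F3), while (F1) and (F4) make it graph measurable with an integrable bound, the superposition multimap $S_F:C^{1-\gamma;\psi}([a,b],X)\multimap L^1([a,b],X)$ has nonempty closed convex values and is lower semicontinuous; by Michael's continuous selection theorem there is a continuous $g$ with $g(q)\in S_F(q)$. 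Substituting $f=g(q)$ into \eqref{A3}--\eqref{A5} produces a single-valued operator $\widehat\Gamma_\epsilon$. The control \eqref{A4} lands in $L^2$ by Lemma \ref{tech}, and the demicontinuity of $J$ together with continuity of $g$ and boundedness of $B,B^{*},P_\alpha,P_\alpha^{*}$ makes $\widehat\Gamma_\epsilon$ continuous; its fixed points are precisely the mild solutions of \eqref{5.1} driven by \eqref{A4}. This is how I would bypass the nonconvexity issue (difficulty (1)): no multivalued fixed point theorem is ever invoked, and the need for lower semicontinuity (difficulty (2)) enters exactly through Michael's theorem.

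Next I would establish an a priori bound and compactness. Using Proposition \ref{Prop4.3}, the bound $\norm{F(t,x)}\le m(t)$ from (F4), and Hölder's inequality controlled through Lemma \ref{tech} (which tames the singular kernel $s\mapsto\psi'(s)(\Psi(t,s))^{\alpha-1}$, difficulty (3)), one gets $\norm{g(q)(t)}\le m(t)$ and a control bounded uniformly in $q$, since $\norm{N(g(q))}$ is bounded and $\norm{\epsilon(\epsilon I+R(b)J)^{-1}h}\le\norm{h}$; any residual $q$-dependence is absorbed by the generalized Gronwall inequality. Hence there is a closed bounded convex ball $B_R$ with $\widehat\Gamma_\epsilon(B_R)\subset B_R$. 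Writing $\widehat\Gamma_\epsilon(q)(t)=S_{\alpha,\beta}(\Psi(t,a))x_0+(\Lambda[g(q)+Bu_q])(t)$, the compactness of $\Lambda$ from Lemma \ref{Lem 5.8} (which rests on compactness of $P_\alpha(t)$, guaranteed by (T)) together with the uniform integrable bound on $g(q)+Bu_q$ shows $\widehat\Gamma_\epsilon(B_R)$ is relatively compact. Schauder's theorem then supplies $q_\epsilon\in B_R$ with $q_\epsilon=\widehat\Gamma_\epsilon(q_\epsilon)$, i.e.\ a mild solution with selection $f_\epsilon:=g(q_\epsilon)$ and control $u_\epsilon$ from \eqref{A4}.

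For the limit I would substitute the control into $q_\epsilon(b)$ and use the defining relation $\epsilon z_\epsilon+R(b)Jz_\epsilon=N(f_\epsilon)$ with $z_\epsilon:=(\epsilon I+R(b)J)^{-1}N(f_\epsilon)$; the target term telescopes to $q_\epsilon(b)-x_1=-\epsilon z_\epsilon=-\epsilon(\epsilon I+R(b)J)^{-1}N(f_\epsilon)$. Because $\{f_\epsilon\}$ is integrably bounded by $m$, Corollary \ref{Corollary} renders $\{N(f_\epsilon)\}$ relatively compact in $X$, so along a subsequence $N(f_\epsilon)\to w$. From (C) the linear resolvent obeys $\epsilon(\epsilon I+R(b)J)^{-1}w\to0$, and I would bound the difference $\epsilon z_\epsilon-\epsilon(\epsilon I+R(b)J)^{-1}w$ by testing the subtracted equation against $Jz_\epsilon-J\hat z_\epsilon$, exploiting monotonicity of $J$, positivity of $R(b)$ in the sense $\langle R(b)x^{*},x^{*}\rangle\ge0$, and $\norm{N(f_\epsilon)-w}\to0$, to conclude $\epsilon z_\epsilon\to0$ and hence $q_\epsilon(b)\to x_1$.

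I expect this last step to be the main obstacle. Because $J$ is nonlinear, the operator $h\mapsto\epsilon(\epsilon I+R(b)J)^{-1}h$ is nonlinear, so the clean Hilbert-space contraction estimate no longer decouples the varying source $N(f_\epsilon)$ from the vanishing factor $\epsilon$ (the natural monotonicity pairing produces terms of order $\norm{N(f_\epsilon)-w}/\epsilon$, which do not obviously vanish). The delicate point is a uniform-in-$\epsilon$ equicontinuity, or nonexpansiveness, estimate for this nonlinear resolvent that allows the relative compactness of $\{N(f_\epsilon)\}$ and the pointwise convergence furnished by (C) to combine; this is precisely the convexity/duality difficulty that the paper advertises as its novelty.
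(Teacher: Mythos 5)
Your construction of the approximating solutions $q_\epsilon$ is, step for step, the paper's own proof: the paper also collapses the nonconvex multimap to a single-valued operator by applying Michael's selection theorem to the lower semicontinuous, closed-convex-valued selection multimap (its $S_\epsilon\circ\Lambda_\epsilon$ in STEP-IV), establishes invariance of a bounded set using Proposition \ref{Prop4.3}, (F4) and the resolvent bound $\norm{\epsilon(\epsilon I+R(b)J)^{-1}h}\le\norm{h}$, gets relative compactness from the argument of Lemma \ref{Lem 5.8}, applies Schauder's theorem, and then derives exactly your identity $q_\epsilon(b)-x_1=-\epsilon(\epsilon I+R(b)J)^{-1}N(f_\epsilon)$. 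Your use of Corollary \ref{Corollary} to get relative compactness of $\{N(f_\epsilon)\}$ directly is a legitimate (slightly more economical) variant of the paper's route via Dunford--Pettis plus Corollary \ref{Corollary}. One cosmetic remark: under (F4) the bound $\norm{F(t,x)}\le m(t)$ is uniform in $x$, so no Gronwall inequality is needed anywhere in this theorem; that tool belongs to the optimal-control section, where (U3) makes the bound state-dependent.

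The genuine gap is exactly where you place it, and your candidate method for closing it would fail. Subtracting the resolvent equations for $N(f_\epsilon)$ and $w$ and pairing with $Jz_\epsilon-J\hat z_\epsilon$ only produces, via monotonicity of $J$, the weak lower bound $(\norm{z_\epsilon}-\norm{\hat z_\epsilon})^2$, while the right-hand side is of order $\norm{N(f_\epsilon)-w}(\norm{z_\epsilon}+\norm{\hat z_\epsilon})\sim\norm{N(f_\epsilon)-w}/\epsilon$; no uniform nonexpansiveness or equicontinuity of the nonlinear resolvent is available outside Hilbert spaces, so this pairing gives nothing. The paper does not prove this step by an estimate of that kind either: it disposes of it by citing Lemma 4.4 of \cite{pinaud2020controllability}. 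So your write-up is incomplete precisely at the one point where the paper leans on an external lemma, and that point is the actual content of approximate controllability (everything before it only produces mild solutions for each $\epsilon$).

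The missing idea is not an estimate comparing $z_\epsilon$ with $\hat z_\epsilon$, but a contradiction argument exploiting three things you never use: positive homogeneity of $J$, compactness of the Gramian $R(b)$ (a consequence of (T), since $P_\alpha(t)$ is compact for $t>0$ and the singular kernel is integrable for $\alpha>\tfrac12$), and the classical equivalence of (C) with $\langle R(b)x^*,x^*\rangle>0$ for $x^*\neq0$. Write $z_\epsilon=(\epsilon I+R(b)J)^{-1}N(f_\epsilon)$ and suppose $\epsilon\norm{z_\epsilon}\ge\delta>0$ along a subsequence. Pairing $\epsilon z_\epsilon+R(b)Jz_\epsilon=N(f_\epsilon)$ with $Jz_\epsilon$ and using $\langle R(b)x^*,x^*\rangle\ge0$ gives $\epsilon\norm{z_\epsilon}\le\norm{N(f_\epsilon)}$, hence $\norm{z_\epsilon}\ge\delta/\epsilon\to\infty$. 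Set $v_\epsilon=Jz_\epsilon/\norm{z_\epsilon}$, so $\norm{v_\epsilon}=1$ and $Jz_\epsilon=\norm{z_\epsilon}v_\epsilon$. Dividing the equation by $\norm{z_\epsilon}$ shows $R(b)v_\epsilon\to0$ strongly; extracting $v_\epsilon\rightharpoonup v$ by reflexivity and using compactness of $R(b)$ yields $R(b)v=0$, hence $\langle R(b)v,v\rangle=0$, hence $v=0$ by (C). Then
\begin{equation*}
\delta\le\epsilon\norm{z_\epsilon}=\langle \epsilon z_\epsilon,v_\epsilon\rangle
=\langle N(f_\epsilon),v_\epsilon\rangle-\norm{z_\epsilon}\langle R(b)v_\epsilon,v_\epsilon\rangle
\le\langle N(f_\epsilon)-w,v_\epsilon\rangle+\langle w,v_\epsilon\rangle\to0,
\end{equation*}
since $\langle R(b)v_\epsilon,v_\epsilon\rangle\ge0$, $N(f_\epsilon)\to w$ strongly and $v_\epsilon\rightharpoonup0$; this is the desired contradiction. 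Supplying this argument (or simply invoking the lemma the paper invokes) is what your final paragraph is missing.
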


\begin{proof}
	For each $\epsilon>0$, we define a map $\Gamma_{\epsilon}:C^{1-\gamma;\psi}([a,b],X)\multimap C^{1-\gamma;\psi}([a,b],X)$, where for each $q\in C^{1-\gamma;\psi}([a,b],X)$, $\Gamma_{\epsilon}(q)$ consists functions $y\in C^{1-\gamma;\psi}([a,b],X)$ which satisfies
	\begin{equation}\label{5.2}				
		y(t)=S_{\alpha,\beta}(\Psi(t,a))x_0+\int_{a}^{t}\psi^{\prime}(s)(\Psi(t,s))^{\alpha-1}P_{\alpha}(\Psi(t,s))[f(s)+Bu(s)]ds,
	\end{equation}
	$ t\in [a,b],$ where $f\in S_F(q)$ and $u\in L^2([a,b],Y)$ is given by
	\begin{equation}\label{5.3}
		u(t)=\psi^{\prime}(t)(\Psi(b,t))^{\alpha-1}B^*P^*_{\alpha}(\Psi(b,t))J(\epsilon I +R(b)J)^{-1}N(f), t\in [a,b].
	\end{equation}
	In the above 
	\begin{equation}\label{5.4}
		N(f)=x_1-S_{\alpha,\beta}(\Psi(b,a))x_0-\int_{a}^{b}\psi^{\prime}(s)K_{\alpha}(\Psi(b,s))f(s)ds.
	\end{equation}
	For the sake of convenience, we distinguish the proof in several steps:\\

	\textbf{STEP-I:} $\Gamma_{\epsilon}$ maps bounded sets into bounded sets.
	Let $$Q_R^{\epsilon}=\{y\in C^{1-\gamma;\psi}([a,b],X): \norm{y}_{C^{1-\gamma;\psi}([a,b],X)}\le R \}.$$
	For any $y\in \Gamma_{\epsilon}(Q_R^{\epsilon})$, \eqref{5.2}-\eqref{5.4} hold. Then we compute
	\begin{align*}
		\norm{y(t)}\le &	\norm{S_{\alpha,\beta}(\Psi(t,a))x_0}+\int_{a}^{t}  \norm{K_{\alpha}(\Psi(t,s))  [f(s)+Bu(s)]\psi'(s)} ds, t\in (a,b].
	\end{align*}
	Using proposition \ref{Prop4.3} and Hypothesis (F4) we obtain for $t \in (a,b]$,
	\begin{equation}\label{5.5}
		\begin{aligned}
			&\norm{y(t)}\\\le & \norm{S_{\alpha,\beta}(\Psi(t,a))x_0}+\int_{a}^{t} \psi'(s)  (\Psi(t,s))^{\alpha-1}\frac{M}{\Gamma(\alpha)} \norm{f(s)+Bu(s)} ds\\
			\le &(\Psi(t,a))^{\gamma-1}\frac{M}{\Gamma(\gamma)}\norm{x_0}\\
			&+\frac{M}{\Gamma(\alpha)} \int_{a}^{t} \psi'(s)  (\Psi(t,s))^{\alpha-1}m(s) ds+\frac{M}{\Gamma(\alpha)} \int_{a}^{t} \psi'(s)  (\Psi(t,s))^{\alpha-1}\norm{Bu(s)} ds. 
		\end{aligned}
	\end{equation}	
	We also compute
	\begin{align*}
		&\norm{N(f)}\\
		= &
		\norm{	x_1-S_{\alpha,\beta}(\Psi(b,a))x_0-\int_{a}^{b}\psi^{\prime}(s)K_{\alpha}(\Psi(b,s))f(s)ds}\\
		\le& \norm{x_1}+(\Psi(b,a))^{\gamma-1}\frac{M}{\Gamma(\gamma)}\norm{x_0}+\frac{M}{\Gamma(\alpha)}\int_{a}^{b}\psi^{\prime}(s)(\Psi(b,s))^{\alpha-1}m(s)ds=K_0~\text{say}.
	\end{align*}
	Using the expression \eqref{5.3} we estimate 
	\begin{align*}
		&	\norm{\int_{a}^{t}\psi^{\prime}(s)K_{\alpha}(\Psi(t,s))Bu(s)ds}\\		=&\int_{a}^{t}\psi^{\prime}(s)(\Psi(t,s))^{\alpha-1}\norm{P_{\alpha}(\Psi(t,s))Bu(s)}ds\\
		=&\int_{a}^{t}[\psi^{\prime}(s)]^2(\Psi(t,s))^{\alpha-1}(\Psi(b,s))^{\alpha-1}
		\\
		&\hspace{3cm}\times\norm{P_{\alpha}(\Psi(t,s))BB^*P^*_{\alpha}(\Psi(b,s))J(\epsilon I +R(b)J)^{-1}N(f)}ds\\
		\le& \tilde{K}\int_{a}^{t}\psi^{\prime}(s)(\Psi(t,s))^{\alpha-1}(\Psi(b,s))^{\alpha-1}\\
		&\hspace{3cm}\times\norm{P_{\alpha}(\Psi(t,s))BB^*P^*_{\alpha}(\Psi(b,s))J(\epsilon I +R(b)J)^{-1}N(f)}ds.
	\end{align*}
	In the above, $\tilde{K}=\sup_{t \in [a,b]}\abs{\psi^{\prime}(t)}$. 	Again, by means of Hypothesis (F4) we can write
	\begin{align*}
		&	\norm{\int_{a}^{t}\psi^{\prime}(s)K_{\alpha}(\Psi(t,s))Bu(s)ds}\\
		\le& \tilde{K}\frac{M}{\Gamma(\alpha)}\norm{B}\norm{B^*}\frac{M}{\Gamma(\alpha)}\int_{a}^{t}\psi^{\prime}(s)(\Psi(t,s))^{\alpha-1}(\Psi(b,s))^{\alpha-1}\\
		&\hspace{7cm}\times\norm{J(\epsilon I +R(b)J)^{-1}N(f)}ds\\
		\le&\frac{1}{\epsilon} \tilde{K}\frac{M}{\Gamma(\alpha)}\norm{B}\norm{B^*}\frac{M}{\Gamma(\alpha)}\int_{a}^{t}\psi^{\prime}(s)(\Psi(t,s))^{\alpha-1}(\Psi(b,s))^{\alpha-1}\\
		&	\hspace{7cm}\times\norm{\epsilon(\epsilon I +R(b)J)^{-1}N(f)}ds.	
	\end{align*}
	By virtue of Lemma 2.2 of \cite{mahmudov2003approximate} we obtain
	\begin{align*}
		&	\norm{\int_{a}^{t}\psi^{\prime}(s)K_{\alpha}(\Psi(t,s))Bu(s)ds}\\	
		\le&\frac{1}{\epsilon} \tilde{K}\frac{M}{\Gamma(\alpha)}\norm{B}\norm{B^*}\frac{M}{\Gamma(\alpha)}\int_{a}^{t}\psi^{\prime}(s)(\Psi(t,s))^{\alpha-1}(\Psi(b,s))^{\alpha-1}\norm{N(f)}ds.
	\end{align*}
	Due to the nonincreasing nature of the map $t\to t^{\alpha-1}, \alpha-1<0$ we obtain
	\begin{align}
		&	\norm{\int_{a}^{t}\psi^{\prime}(s)K_{\alpha}(\Psi(t,s))Bu(s)ds}\\	
		\le&\frac{1}{\epsilon} \tilde{K}\frac{M}{\Gamma(\alpha)}\norm{B}\norm{B^*}\frac{M}{\Gamma(\alpha)}\norm{N(f)}\int_{a}^{t}\psi^{\prime}(s)(\Psi(t,s))^{\alpha-1}(\Psi(t,s))^{\alpha-1}ds.
	\end{align}
	Putting these pieces together, we obtain from \eqref{5.5}
	\begin{align*}
		\norm{y(t)}\le& \frac{M}{\Gamma(\gamma)}(\Psi(t,a))^{\gamma-1}\norm{x_0}+\frac{M}{\Gamma(\alpha)} \int_{a}^{t} \psi'(s)  (\Psi(t,s))^{\alpha-1}m(s) ds\\
		&+\frac{1}{\epsilon} \tilde{K}\frac{M}{\Gamma(\alpha)}\norm{B}\norm{B^*}\frac{M}{\Gamma(\alpha)}\norm{N(f)}\int_{a}^{t}\psi^{\prime}(s)(\Psi(t,s))^{\alpha-1}(\Psi(t,s))^{\alpha-1}ds.
	\end{align*}
	Hence
	\begin{align*}
		&\norm{(\Psi(t,a))^{1-\gamma}y(t)}\\
		\le& \frac{M}{\Gamma(\gamma)}\norm{x_0}+(\Psi(t,a))^{1-\gamma}\frac{M}{\Gamma(\alpha)} \int_{a}^{t} \psi'(s)  (\Psi(t,s))^{\alpha-1}m(s) ds\\
		&\hspace{2cm}+(\Psi(t,a))^{1-\gamma}\frac{1}{\epsilon} \tilde{K}\frac{M}{\Gamma(\alpha)}\norm{B}\norm{B^*}\frac{M}{\Gamma(\alpha)}\norm{N(f)}\frac{(\Psi(t,a))^{2\alpha-1}}{2\alpha-1}\\
		\le& \frac{M}{\Gamma(\gamma)}\norm{x_0}+(\Psi(t,a))^{1-\gamma}\frac{M}{\Gamma(\alpha)} \int_{a}^{t} \psi'(s)  (\Psi(t,s))^{\alpha-1}m(s) ds\\
		&\hspace{2cm}+(\Psi(b,a))^{1-\gamma}\frac{1}{\epsilon} \tilde{K}\frac{M}{\Gamma(\alpha)}\norm{B}\norm{B^*}\frac{M}{\Gamma(\alpha)}\norm{N(f)}\frac{(\Psi(b,a))^{2\alpha-1}}{2\alpha-1}.
	\end{align*}
	Due to Hypothesis (F4) we guarantee that there exists $r>0$ such that
	\begin{equation}\label{M3}
		\frac{M}{\Gamma(\gamma)}\norm{x_0}+\frac{M}{\Gamma(\alpha)}\sup_{t \in [a,b]}\left[(\Psi(t,a))^{1-\gamma}\int_{a}^{t}\psi^{\prime}(s)(\Psi(t,s))^{\alpha-1}m(s)ds\right]\le r.
	\end{equation}
	Therefore, taking supremum over $[a,b]$ and noting \eqref{M3} we see that
	\begin{align*}
		\norm{y}_{C^{1-\gamma;\psi}([a,b],X)}\le M_0~\text{for some}~M_0>0~\text{for all}~y\in B_R.
	\end{align*}
	Hence we see that
	\begin{equation}
		\norm{\Gamma_{\epsilon}(q)}_{C^{1-\gamma;\psi}([a,b],X)}\le M_0.
	\end{equation} 
	This completes STEP-I.\\
	
	\textbf{STEP-II} The image of the set
	\begin{equation*}
		Q_R^{\epsilon}=\{y\in C^{1-\gamma;\psi}([a,b],X): \norm{y}_{C^{1-\gamma;\psi}([a,b],X)}\le R\}
	\end{equation*} 
	under the map $\Gamma_{\epsilon}$ is equicontinuous for every $R>0$. \\
	Proceeding as in the proof of equicontinuity part of Lemma \ref{Lem 5.8}, the equicontinuity of the set $\{\Gamma_{\epsilon}(Q_R^{\epsilon})\}$ can be proved.\\

	\textbf{STEP-III} Fix $\epsilon>0$. We show that the multivalued map $\Gamma_{\epsilon}$ maps some nonempty compact convex set $Q^{\epsilon}$ into itself. In STEP-I, we see $\Gamma_{\epsilon}(Q_R^{\epsilon})\subset Q_R^{\epsilon}$. Also mimicking the same proof of Lemma \ref{Lem 5.8} we can easily prove  $\Gamma_{\epsilon}(Q_R^{\epsilon})$ is relatively compact in $C^{1-\gamma;\psi}([a,b],X)$. 	
	Define $	Q^{\epsilon}=\overline{conv}\{\Gamma_{\epsilon}(Q_R^{\epsilon})\}, $
	then it follows that $Q^{\epsilon}$ is a nonempty compact convex subset of $C^{1-\gamma;\psi}([a,b], X)$ and 
	\begin{equation*}
		Q^{\epsilon}=\overline{conv}\{\Gamma_{\epsilon}(Q_R^{\epsilon})\}\subset \overline{conv}Q_R^{\epsilon}=Q_R^{\epsilon}.
	\end{equation*}
	Therefore, $\Gamma_{\epsilon}(Q^{\epsilon})\subset Q^{\epsilon}.$
	
	\textbf{STEP-IV}  In this part we prove the existence of fixed points of the multivalued map $\Gamma_{\epsilon}$ for each $\epsilon>0.$
	Let $W_{\epsilon}: Q^{\epsilon}\multimap L^1([a,b],X)$ be defined as follows:\\
	for each $q\in Q^{\epsilon}, W_{\epsilon}(q)$ consists of all those functions $f:[a,b]\to X$ such that $f\in L^1([a,b],X):f(t)\in F(t,q(t)) ~\text{a.a.}~t\in [a,b]$. $W_{\epsilon}(q)$ is nonempty by Hypothesis (F1) and Aumann selection theorem \cite{wagner1977survey}. Also, $W_{\epsilon}$ is lower semicontinuous \cite{li2003controllability}
	and has closed and convex values. Thus by Michael's Selection theorem \cite{michael2003continuous}, there is a continuous function $\Lambda_{\epsilon}: Q^{\epsilon}\to L^1([a,b], X)$ such that $\Lambda_{\epsilon}(q)\in W_{\epsilon}(q)$ for every $q\in Q^{\epsilon}$. Therefore, $S_{\epsilon}\circ\Lambda_{\epsilon}:Q^{\epsilon}\to Q^{\epsilon}$ is a single-valued mapping and $S_{\epsilon}\circ\Lambda_{\epsilon}(q)\in \Gamma_{\epsilon}(q)$ for every $q\in Q^{\epsilon}$, where $S_{\epsilon}: L^1([a,b],X)\to C^{1-\gamma;\psi}([a,b],X)$ is defined as follows: if $f\in L^1([a,b],X)$ then $S_{\epsilon}(f)$ satisfy. Moreover, the map $S_{\epsilon}\circ\Lambda_{\epsilon}: Q^{\epsilon}\to Q^{\epsilon}$ is a continuous map. \\
	Therefore by Schauder's fixed point theorem, for each $\epsilon>0$, we obtain a solution $q_{\epsilon}$ of our problem. Hence we obtain
	\begin{equation}
		q_{\epsilon}(t)=S_{\alpha,\beta}(\Psi(t,a))x_0+\int_{a}^{t}\psi^{\prime}(s)(\Psi(t,s))^{\alpha-1}P_{\alpha}(\Psi(t,s))[f_{\epsilon}(s)+Bu_{\epsilon}(s)]ds, t\in (a,b],
	\end{equation}
	where $f_{\epsilon}\in S_F(q_{\epsilon})$ and $u_{\epsilon}\in L^2([a,b],Y)$ is given by
	\begin{equation}
		u_{\epsilon}(t)=\psi^{\prime}(t)(\Psi(b,t))^{\alpha-1}B^*P^*_{\alpha}(\Psi(b,t))J(\epsilon I +R(b)J)^{-1}N(f_{\epsilon}), t\in [a,b].
	\end{equation}
	In the above 
	\begin{equation}
		N(f_{\epsilon})=x_1-S_{\alpha,\beta}(\Psi(b,a))x_0-\int_{a}^{b}\psi^{\prime}(s)K_{\alpha}(\Psi(b,s))f_{\epsilon}(s)ds.
	\end{equation}

	\textbf{STEP-V} In this part we conclude this section by proving the approximate controllability result.

	Let $q_{\epsilon}\in Q^{\epsilon}$ be the fixed points of the map $\Gamma_{\epsilon}$ for each $\epsilon>0$. Clearly, $q_{\epsilon}$ is a solution of the problem \eqref{5.1} for each $\epsilon>0$. We finish the proof by showing that $q_{\epsilon}(b)\to x_1$. In order to do this we need the convergence of the sequence $\{f_{\epsilon}\}\subset L^1([a,b],X)$.

	By hypothesis (F4), we obtain $\{f_{\epsilon}\}$ is integrably bounded and $\{f_{\epsilon}(t)\}$ is weakly relatively compact set in $X$. Therefore Dunford Pettis theorem \ref{DF} guarantees that the set $\{f_{\epsilon}\}\subset L^1([a,b],X)$ is relatively weakly compact. Hence we obtain $f_{\epsilon}\rightharpoonup f$ in $L^1([a,b],X)$ upto a subsequence. Therefore using Corollary \ref{Corollary}, we obtain
	\begin{align*}
		N(f_{\epsilon})&\to N(f)=x_1-S_{\alpha,\beta}(\Psi(b,a))x_0-\int_{a}^{b}\psi^{\prime}(s)K_{\alpha}(\Psi(b,s))f(s)ds.
	\end{align*}
	We now estimate
	\begin{align*}
		q_{\epsilon}(b)=&S_{\alpha,\beta}(\Psi(b,a)x_0+\int_{a}^{b}\psi^{\prime}(s)(\Psi(b,s))^{\alpha-1}P_{\alpha}(\Psi(b,s))[f_{\epsilon}(s)+Bu_{\epsilon}(s)]ds\\
		=&x_1-N(f_{\epsilon})+R(b)J(\epsilon I+R(b)J)^{-1}N(f_{\epsilon})\\
		=&x_1-N(f_{\epsilon})+N(f_{\epsilon})-\epsilon (\epsilon I+R(b)J)^{-1}N(f_{\epsilon})\\
		=&x_1-\epsilon (\epsilon I+R(b)J)^{-1}N(f_{\epsilon}).
	\end{align*}
	Therefore, the fact that $N(f_{\epsilon})\to N(f)$ in $X$ and with Lemma 4.4 of \cite{pinaud2020controllability} we obtain $q_{\epsilon}(b)\to x_1$ as desired.
\end{proof}

\section{Approximate Controllability Application}
Let us consider the following diffusion control system
\begin{equation}\label{M1}
	\begin{cases}
		^HD^{\alpha,\beta;\psi}_{0+} y(t,\xi)\in\frac{\partial^2 y(t,\xi)}{\partial \xi^2}+\displaystyle a(t,\xi)W\left(\xi,\int_{\Omega}\phi(\xi)y(t,\xi)d\xi\right)+Bv(t,\xi),\\
		t\in J=[0,T], ~\text{a.e.}~\xi\in \Omega=[0,\pi]\\
		y(t,0)=0, y(t,\pi)=0, t\in J\\
		I_{0+}^{1-\gamma;\psi}y(t,\xi)=x_0(\xi), t\in J, \xi\in \Omega.
	\end{cases}
\end{equation}
Here $	^HD^{\alpha,\beta;\psi}_{0+}$ is the $\psi$-Hilfer fractional derivative of order $\alpha$ and type $\beta$, $W:\Omega\times \mathbb{R}\to \mathbb{R}$ is a function with $W(\cdot,r)\in L^2(\Omega)\cap ~AC_{loc}(\Omega)$ for every $r\in \mathbb{R}$ and satisfying
\begin{equation}\label{W1}
	\abs{\frac{\partial W(\xi,r)}{\partial \xi}}\le l(\xi)~\text{for a.e.}~\xi\in \Omega, ~\text{for every}~r\in \mathbb{R},
\end{equation}
where $l\in L^1(\mathbb{R})$ and
\begin{equation}\label{W2}
	f_1(\xi,r)\le W(\xi,r)\le f_2(\xi,r)~\text{for every}~\xi\in \Omega, ~\text{for every}~r\in \mathbb{R},
\end{equation}
with $f_1, f_2:\Omega\times \mathbb{R}\to \mathbb{R}$ are functions such that for $i=1,2$,
\begin{equation}\label{W3}
	f_i(\cdot,r)\in L^2(\Omega)\cap ~AC_{loc}(\Omega)~\text{for every}~r\in \mathbb{R},
\end{equation}
and
\begin{equation}\label{W4}
	\abs{\frac{\partial f_i(\xi,r)}{\partial \xi}}\le l(\xi)~\text{for a.e.}~\xi\in \Omega, ~\text{for every}~r\in \mathbb{R};
\end{equation}
moreover, there exists $K_1>0$ such that
\begin{equation}\label{W5}
	\abs{f_i(\xi,r)}\le K_1, \xi\in \Omega, r\in \mathbb{R};
\end{equation} 
further
\begin{equation}\label{W6}
	f_1(\xi,r)\le f_2(\xi,r)~\text{for every}~\xi\in \Omega, r\in \mathbb{R},
\end{equation}
and
\begin{equation}\label{W7}
	f_1(\xi,r_0)\ge \limsup_{r\to r_0}f_1(\xi,r)~\text{and}~	f_2(\xi,r_0)\le \liminf_{r\to r_0}f_2(\xi,r),
\end{equation}
for every $\xi\in \Omega, r_0\in \mathbb{R}$.
In the above $AC_{loc}(\Omega)$ denotes the space of all the locally absolutely continuous functions 
defined on $\Omega$.
The prototype for the functions $f_1, f_2$ and $\phi$ appearing can be found in \cite{malaguti2016nonsmooth}.
Also, the function $a(t,\xi):[0,T]\times \Omega\to \mathbb{R}$ is such that
\begin{itemize}
	\item[(a1)] $(t,\xi)\mapsto a(t,\xi)$ is measurable for all $(t,\xi)\in [0,T]\times \Omega$.
	\item[(a2)] $\xi\mapsto a(t,\xi)$ is continuous.
	\item[(a3)] There exists $m\in L^1([0,T])$ such that 
	\begin{equation}
		\abs{a(t,\xi)}\le m(t)~\text{and}~\lim_{t\to 0+}(\psi(t)-\psi(0))^{1-\gamma}I_{0+}^{\alpha;\psi}m(t)=0.
	\end{equation}
\end{itemize}
From the properties assigned on $W$, we see that jump discontinuities appear in the connection between the integral $\displaystyle\int_{\Omega}\phi(\xi)y(t,\xi)d\xi$ and
the control function $W(\xi, r)$. Therefore, differential inclusion comes into the picture.

Our approach to the problem is to rewrite the control process as an abstract problem driven by a fractional inclusion in the space $X=L^2(\Omega)$. To this aim, we put:
\begin{equation*}
	u(t)(\xi)=v(t,\xi) ~\text{and}~q(t)(\xi)=y(t,\xi), t\in J, \xi\in \Omega.
\end{equation*}
Clearly, $q:[0,T]\to L^2(\Omega,\mathbb{R})$.\\ 
We define the opertaor $A: D(A)\subset X\to X$ is defined by $Ay=y^{\prime \prime}$, where the domain $D(A)$ is given by
\begin{equation}
	D(A)=\{y\in X: y, y^{\prime}~\text{are absolutely continuous,}~ y^{\prime \prime }\in X, y(0)=y(\pi)=0\}. 
\end{equation}
Then $A$ can be written as 
\begin{equation}
	Ay=-\sum_{n=1}^{\infty}n^2\langle y,e_n\rangle e_n, y\in D(A),
\end{equation}
where $e_n(\xi)=\sqrt{\frac{2}{\pi}}\sin(n \xi), n=1,2,3,\cdot\cdot\cdot$ is an orthonormal basis of $X$. It is well known that $A$ is the infinitesimal generator of a differentiable semigroup $T(t), t>0$ in $X$ given by
\begin{equation}
	T(t)y=\sum_{n=1}^{\infty}e^{-n^2t}\langle y, e_n\rangle e_n,
\end{equation}
and \begin{equation}
	\norm{T(t)}\le e^{-1}<1=M.
\end{equation}
%
%
We define $F:[0,T]\times L^2(\Omega)\multimap L^2(\Omega)$ as
\begin{equation*}
	F(t,x)(\xi)=a(t,\xi)H(x), (t,x)\in [0,T]\times L^2(\Omega),
\end{equation*}
where the multivalued map $H: L^2(\Omega)\multimap L^2(\Omega)$ is defined as
\begin{align*}
	H(x)=&\left\{y\in L^2(\Omega)\cap ~AC_{loc}(\Omega): \abs{y^{\prime}(\xi)}\le l(\xi)~\text{for a.a.}~ \xi\in \Omega~\text{and}~\right.\\
	&\left. f_1\left(\xi,\int_{\Omega}\phi(\omega)x(\omega)d\omega \right)\le y(\xi)\le f_2\left(\xi,\int_{\Omega}\phi(\omega)x(\omega)d\omega\right)
	\right\}.
\end{align*}
By the conditions \eqref{W1}-\eqref{W7}, it is immediate that the multimap $H$ is well defined.\\
Take $Y=L^2(\Omega)$ and $B: Y\to X $ to be the identity operator.
Now, we can write the reformulation of equation \eqref{M1} as the following semilinear evolution inclusion in the Banach space $X=L^2(\Omega)$:
\begin{equation*}
	\begin{cases}
		^HD_{0+}^{\alpha,\beta;\psi}q(t)\in Aq(t)+F(t,q(t))+Bu(t), t\in [0,T]\\
		I_{0+}^{1-\gamma;\psi}q(0)=x_0.
	\end{cases}
\end{equation*}
Of course, the solutions to these equations give rise to solutions for \eqref{M1}.\\
We now verify the assumptions required to prove the approximate controllability results.

Consider the following linear initial boundary value problem of fractional parabolic control system with $\psi$-Hilfer fractional derivative
\begin{equation}\label{6.14}
	\begin{cases}
		^HD^{\alpha,\beta;\psi}_{0+} q(t)=Aq(t)+Bu(t), t\in J=[0,T], \xi\in \Omega=[0,\pi];\\
		q(0)=q(\pi)=0, t\in J=[0,T]\\
		I_{0+}^{1-\gamma;\psi}q(0)=x_0, t\in [0,T].
	\end{cases}
\end{equation}
We show that the linear control system \eqref{6.14} is approximately controllable in $[0, T]$. Since $D(A)$ is dense in $X$, it is sufficient to show that for any $\eta\in D(A)$, there exists a control $u(\cdot)\in L^2(J,Y)$ such that $q(T)=\eta$, where $q$ is the solution of \eqref{6.14} corresponding to the control $u(\cdot)$. Define the operator $L: L^2(J,X)\to X$ by
\begin{equation}
	L(h)=\int_{0}^{T}\psi^{\prime}(s)(\Psi(t,s))^{\alpha-1}P_{\alpha}(\Psi(t,s))h(s)ds, h\in L^2(J,X).
\end{equation} 
We show for $\xi=\eta-S_{\alpha,\beta}(\psi(T)-\psi(0))x_0\in X$, there exists $\rho\in L^2(J,X)$ such that $L(\rho)=\eta-S_{\alpha,\beta}(\psi(T)-\psi(0))x_0$.\\
From this, it is immediate that the linear control system \eqref{6.14} is approximately controllable.
Choose $\rho\in L^2(J,X)$ as follows:
\begin{align*}
	\rho(t)=&\frac{[\Gamma(\alpha)]^2}{\psi(T)-\psi(0)}(\psi(T)-\psi(t))^{1-\alpha
	}\left[P_{\alpha}(\psi(T)-\psi(t))\xi\right.\\
	&\left.-2(\psi(t)-\psi(0))\frac{d}{dt}P_{\alpha}(\psi(T)-\psi(t))\xi\right].
\end{align*}
We now compute
\begin{align*}
	L(\rho)=&\int_{0}^{T}\psi^{\prime}(s)(\psi(T)-\psi(s))^{\alpha-1}P_{\alpha}(\psi(T)-\psi(s))\frac{[\Gamma(\alpha)]^2}{\psi(T)-\psi(0)}(\psi(T)-\psi(s))^{1-\alpha
	}\\
	&\hspace{2cm}\times\left[P_{\alpha}(\psi(T)-\psi(s))\xi-2(\psi(s)-\psi(0))\frac{d}{dt}P_{\alpha}(\psi(T)-\psi(s))\xi\right]ds\\
	=&\frac{[\Gamma(\alpha)]^2}{\psi(T)-\psi(0)}\int_{0}^{T}\psi^{\prime}(s)(\psi(T)-\psi(s))^{\alpha-1}P_{\alpha}(\psi(T)-\psi(s))\xi(\psi(T)-\psi(s))^{1-\alpha
	}\\
	&\hspace{2cm}\times\left[P_{\alpha}(\psi(T)-\psi(s))\xi-2(\psi(s)-\psi(0))\frac{d}{dt}P_{\alpha}(\psi(T)-\psi(s))\xi\right]ds\\
	=&\frac{[\Gamma(\alpha)]^2}{\psi(T)-\psi(0)}\int_{0}^{T}\psi^{\prime}(s)P_{\alpha}(\Psi(T,s))\xi\left[P_{\alpha}(\Psi(T,s))\xi\right.\\
	&\left.-2(\psi(s)-\psi(0))\frac{d}{dt}P_{\alpha}(\Psi(T,s))\xi\right]ds\\
	=&\frac{[\Gamma(\alpha)]^2}{\psi(T)-\psi(0)}\left[\int_{0}^{T}\psi^{\prime}(s)P^2_{\alpha}(\Psi(T,s))\xi ds\right.\\
	&\left.-2\int_{0}^{T}\psi^{\prime}(s)P_{\alpha}(\Psi(T,s))\xi(\psi(s)-\psi(0))\frac{d}{ds}P_{\alpha}(\Psi(T,s))\xi ds\right].
\end{align*}
Noting that
\begin{equation}
	\frac{d}{ds}(P_{\alpha}(\Psi(T,s)))^2=-2\psi^{\prime}(s)P_{\alpha}(\Psi(T,s))\frac{d}{ds}P_{\alpha}(\Psi(T,s)), 
\end{equation}
we obtain
\begin{align*}
	L(\rho)=&\frac{[\Gamma(\alpha)]^2}{\psi(T)-\psi(0)}\left[\int_{0}^{T}\psi^{\prime}(s)P^2_{\alpha}(\Psi(T,s))\xi ds\right.\\
	&\left.+\int_{0}^{T}(\psi(s)-\psi(0))\frac{d}{dt}[P_{\alpha}(\Psi(T,s))\xi]^2ds\right].
\end{align*}
Integrating by parts we obtain
\begin{align*}
	L(\rho)=&\frac{[\Gamma(\alpha)]^2}{\psi(T)-\psi(0)}\left[\int_{0}^{T}\psi^{\prime}(s)P^2_{\alpha}(\Psi(T,s))\xi ds-(\psi(T)-\psi(0))[P_{\alpha}(0)]^2\right.\\
	&\left.+\int_{0}^{T}\psi^{\prime}(s)[P_{\alpha}(\Psi(T,s))\xi]^2ds\right]=\xi.
\end{align*}
Therefore, we obtain
\begin{equation}
	S_{\alpha,\beta}(\psi(T)-\psi(0))x_0+\int_{0}^{T}\psi^{\prime}(s)(\Psi(T,s))^{\alpha-1}P_{\alpha}(\Psi(T,s))\rho(s) ds=\eta.
\end{equation}
We can choose $u=\rho\in L^2(J,U)$. As the operator $B$ is the identity operator on $X$, we can write
\begin{equation}
	S_{\alpha,\beta}(\psi(T)-\psi(0))x_0+\int_{0}^{T}\psi^{\prime}(s)(\Psi(T,s))^{\alpha-1}P_{\alpha}(\Psi(T,s))Bu(s) ds=\eta.
\end{equation}
This shows that the linear system is controllable in $D(A)$. Since $D(A)$ is dense in $X$, the linear system \eqref{6.14} is approximately controllable.

We now verify the multimap $F$ satisfies the Hypothesis (F1)-(F4). Hypothesis (F1) and (F2) are immediate from the following proposition.

\begin{proposition}
	For every $x\in L^2(\Omega)$, the set $H(x)$ is nonempty, closed and convex in $L^2(\Omega)$.
\end{proposition}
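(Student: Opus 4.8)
The plan is to fix $x\in L^2(\Omega)$ and abbreviate $r:=\int_{\Omega}\phi(\omega)x(\omega)d\omega$, which is a fixed real number. With this notation,
\begin{equation*}
	H(x)=\left\{y\in L^2(\Omega)\cap AC_{loc}(\Omega): \abs{y^{\prime}(\xi)}\le l(\xi)~\text{a.e. and}~f_1(\xi,r)\le y(\xi)\le f_2(\xi,r)\right\},
\end{equation*}
and I would verify the three properties in turn, treating closedness last since it is the only nontrivial one.

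For nonemptiness I claim $y=f_1(\cdot,r)$ belongs to $H(x)$. Indeed, \eqref{W3} gives $f_1(\cdot,r)\in L^2(\Omega)\cap AC_{loc}(\Omega)$, \eqref{W4} gives $\abs{\partial_\xi f_1(\xi,r)}\le l(\xi)$ a.e., and \eqref{W6} gives $f_1(\xi,r)\le f_1(\xi,r)\le f_2(\xi,r)$; so all three defining conditions hold. Convexity is equally direct: if $y_1,y_2\in H(x)$ and $\lambda\in[0,1]$, then $y=\lambda y_1+(1-\lambda)y_2$ again lies in the vector space $L^2(\Omega)\cap AC_{loc}(\Omega)$, the triangle inequality yields $\abs{y^{\prime}(\xi)}\le \lambda\abs{y_1^{\prime}(\xi)}+(1-\lambda)\abs{y_2^{\prime}(\xi)}\le l(\xi)$ a.e., and the pointwise bounds $f_1\le y_i\le f_2$ are preserved under convex combinations, giving $f_1(\xi,r)\le y(\xi)\le f_2(\xi,r)$.

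The main work is closedness. Let $\{y_n\}\subset H(x)$ with $y_n\to y$ in $L^2(\Omega)$; I must produce a representative of $y$ lying in $H(x)$. The crucial observation is that the gradient bound converts into a uniform modulus of continuity: for $\xi_1<\xi_2$ in $\Omega$,
\begin{equation*}
	\abs{y_n(\xi_2)-y_n(\xi_1)}=\abs{\int_{\xi_1}^{\xi_2}y_n^{\prime}(\xi)d\xi}\le \int_{\xi_1}^{\xi_2}l(\xi)d\xi,
\end{equation*}
and since $l\in L^1$ the right-hand side tends to $0$ as $\abs{\xi_2-\xi_1}\to 0$ uniformly in $n$, so $\{y_n\}$ is equicontinuous. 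Moreover $\abs{y_n(\xi)}\le K_1$ for all $n$ by the order bounds together with \eqref{W5}, so the family is uniformly bounded. By the Arzel\`a--Ascoli theorem a subsequence converges uniformly on $\Omega$ to a continuous function $\tilde{y}$, which must coincide with $y$ a.e. because uniform convergence forces $L^2$ convergence on the bounded domain $\Omega$. Passing to the limit in the displayed inequality gives $\abs{\tilde{y}(\xi_2)-\tilde{y}(\xi_1)}\le\int_{\xi_1}^{\xi_2}l$, whence $\tilde{y}\in AC_{loc}(\Omega)$ and, differentiating at Lebesgue points of $l$, $\abs{\tilde{y}^{\prime}(\xi)}\le l(\xi)$ a.e. Finally the order constraints $f_1(\xi,r)\le y_n(\xi)\le f_2(\xi,r)$ pass to the a.e. limit, so $f_1(\xi,r)\le\tilde{y}(\xi)\le f_2(\xi,r)$. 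Hence $\tilde{y}\in H(x)$ with $\tilde{y}=y$ in $L^2(\Omega)$, proving that $H(x)$ is closed.

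The only delicate point is the closedness argument: $L^2$ convergence alone does not obviously preserve the $AC_{loc}$ regularity or the pointwise gradient bound. The device that resolves this is upgrading the $L^1$ derivative bound to equicontinuity and invoking Arzel\`a--Ascoli to extract a continuous (indeed absolutely continuous) representative of the limit, after which every defining inequality survives the passage to the limit.
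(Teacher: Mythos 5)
The paper states this proposition without proof (the \verb|proof| environment that follows it belongs to the next proposition, on lower semicontinuity of $H$), so there is no argument of the authors to compare against; your proposal supplies the missing verification, and it is correct. Nonemptiness via the selection $y=f_1(\cdot,r)$ (using \eqref{W3}, \eqref{W4}, \eqref{W6}) and convexity of the three defining constraints are immediate, as you say, and your closedness argument is the right one: since $L^2$ convergence by itself preserves neither the $AC_{loc}$ regularity nor the pointwise bound $\abs{y'}\le l$, you correctly upgrade the uniform derivative bound to equicontinuity (via absolute continuity of the integral of $l\in L^1$), obtain uniform boundedness from \eqref{W5} combined with the order constraints, and invoke Arzel\`a--Ascoli to produce a continuous representative $\tilde y$ of the $L^2$ limit. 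The estimate $\abs{\tilde y(\xi_2)-\tilde y(\xi_1)}\le\int_{\xi_1}^{\xi_2}l(\xi)\,d\xi$ then passes to the limit, yielding absolute continuity of $\tilde y$, and evaluating the difference quotient at points that are simultaneously differentiability points of $\tilde y$ and Lebesgue points of $l$ gives $\abs{\tilde y'}\le l$ a.e., while the order constraints $f_1(\cdot,r)\le \tilde y\le f_2(\cdot,r)$ survive the pointwise limit. One further point you handle correctly, and which is worth keeping explicit, is that closedness in $L^2$ means exhibiting a representative of the limiting equivalence class inside $H(x)$, which is exactly what the function $\tilde y$ provides.
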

The following proposition verifies Hypothesis (F3).
\begin{proposition}
	The multimap $H$ is lower semicontinuous.
\end{proposition}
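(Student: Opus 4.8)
The plan is to use the sequential characterization of lower semicontinuity: $H$ is lower semicontinuous provided that for every sequence $x_n\to x$ in $L^2(\Omega)$ and every $y\in H(x)$ one can produce elements $y_n\in H(x_n)$ with $y_n\to y$ in $L^2(\Omega)$. The decisive reduction is that $H(x)$ depends on $x$ only through the scalar $r(x)=\int_{\Omega}\phi(\omega)x(\omega)\,d\omega$; since this is a continuous linear functional on $L^2(\Omega)$, setting $r_n=r(x_n)$ and $r_0=r(x)$ we have $r_n\to r_0$ in $\mathbb{R}$, and it suffices to control how the admissible band $[\,f_1(\cdot,r_n),f_2(\cdot,r_n)\,]$ deforms as the parameter moves.

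Given $y\in H(x)$, I would build the competitor $y_n$ by truncating $y$ pointwise to the new band,
\[
y_n(\xi)=\min\{\max\{y(\xi),f_1(\xi,r_n)\},f_2(\xi,r_n)\},
\]
which is well defined and lands in $[\,f_1(\xi,r_n),f_2(\xi,r_n)\,]$ because $f_1\le f_2$ by \eqref{W6}. First I would verify $y_n\in H(x_n)$. The function $y_n$ is a finite lattice combination of the $AC_{loc}$ functions $y$, $f_1(\cdot,r_n)$, $f_2(\cdot,r_n)$, hence $AC_{loc}$; almost everywhere its derivative equals $y'$, $\partial_\xi f_1(\cdot,r_n)$, or $\partial_\xi f_2(\cdot,r_n)$, so the slope bound $\abs{y_n'}\le l$ follows from $\abs{y'}\le l$ together with \eqref{W4}. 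This gives membership $y_n\in H(x_n)$.

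The convergence $y_n\to y$ is the heart of the argument and is where the one-sided hypotheses \eqref{W7} enter. Pointwise, $y_n(\xi)-y(\xi)$ equals $0$, or $[\,f_1(\xi,r_n)-y(\xi)\,]^+$, or $-[\,y(\xi)-f_2(\xi,r_n)\,]^+$. Since $y(\xi)\ge f_1(\xi,r_0)$, the first nonzero case is dominated by $[\,f_1(\xi,r_n)-f_1(\xi,r_0)\,]^+$, and \eqref{W7} yields $\limsup_n f_1(\xi,r_n)\le\limsup_{r\to r_0}f_1(\xi,r)\le f_1(\xi,r_0)$, so this tends to $0$; symmetrically the second case is controlled by the $f_2$ part of \eqref{W7}. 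Hence $y_n\to y$ almost everywhere on $\Omega$. Finally, $f_1\le y_n\le f_2$ forces $\abs{y_n}\le K_1$ by \eqref{W5}, and likewise $\abs{y}\le K_1$, so $\abs{y_n-y}^2\le (2K_1)^2$ is dominated by a constant that is integrable on the bounded set $\Omega$; the Lebesgue dominated convergence theorem then gives $y_n\to y$ in $L^2(\Omega)$, completing the proof.

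The step I expect to be the main obstacle is the almost-everywhere derivative bound for the truncated function $y_n$: one must justify that taking maxima and minima of locally absolutely continuous functions neither destroys absolute continuity nor inflates the derivative past $l$. This rests on the classical fact that on the coincidence set of two absolutely continuous functions their derivatives agree almost everywhere, combined with the uniform slope bound \eqref{W4}; once this is in place, the remaining convergence is a routine application of \eqref{W7} and \eqref{W5}.
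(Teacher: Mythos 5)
Your proposal is correct and follows essentially the same route as the paper: the identical truncation $y_n=\min\{\max\{y,f_1(\cdot,r_n)\},f_2(\cdot,r_n)\}$ (the paper writes it as $\min\{p_n,f_2(\cdot,r_n)\}$ with $p_n=\max\{y,f_1(\cdot,r_n)\}$), the same verification of membership via lattice operations on $AC_{loc}$ functions and the slope bound \eqref{W4}, pointwise convergence from \eqref{W7}, and $L^2$ convergence by dominated convergence using \eqref{W5}. Your positive-part estimate $\abs{y_n-y}\le[f_1(\cdot,r_n)-f_1(\cdot,r_0)]^+ + [f_2(\cdot,r_0)-f_2(\cdot,r_n)]^+$ is merely a cleaner packaging of the paper's case-by-case subsequence analysis, not a different argument.
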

\begin{proof}
	Consider arbitrary $x\in L^2(\Omega), y\in H(x)$ and $\{x_n\}\subset L^2(\Omega)$ with $x_n\to x$ in $L^2(\Omega)$. If we can find $\{y_n\}\subset L^2(\Omega)$ with $y_n\in H(x_n)$ for all $n\in \mathbb{N}$ and such that $y_n\to y$ in $L^2(\Omega)$, then the lower semicontinuity of $H$ is proved.
	
	Putting 
	\begin{equation}
		r_n=\int_{\Omega}\phi(\xi)x_n(\xi)d\xi, r=\int_{\Omega}\phi(\xi)x(\xi)d\xi, n\in \mathbb{N},
	\end{equation}
	we have $r_n\to r$. For every $n\in \mathbb{N}$, let us define 
	\begin{equation}
		y_n(\xi)=\min\{p_n(\xi),f_2(\xi,r_n) \}~\text{for every}~\xi\in \Omega,
	\end{equation}
	where
	\begin{equation}
		p_n(\xi)=\max\{y(\xi), f_1(\xi,r_n)\}~\text{for every}~\xi\in \Omega.
	\end{equation}
	First of all, $y_n\in H(x_n)$. In fact, functions $y, f_1(\cdot,r_n), f_2(\cdot,r_n)$ belong to $AC_{loc}(\Omega)$, so $y_n\in ~AC_{loc}(\Omega)$ too. Moreover,
	\begin{equation}
		p_n^{\prime}(\xi)\in \left\{y^{\prime}(\xi), \frac{\partial f_1(\xi,r_n)}{\partial \xi}\right\}, ~\text{for every}~\xi\in \Omega\setminus N_1,
	\end{equation}
	and
	\begin{equation}
		y_n^{\prime}(\xi)\in \left\{p_n^{\prime}(\xi), \frac{\partial f_2(\xi,r_n)}{\partial \xi}\right\}, ~\text{for every}~\xi\in \Omega\setminus N_2,
	\end{equation}
	where $\mu(N_1)=\mu(N_2)=0$. So, we obtain
	\begin{equation}
		y_n^{\prime}(\xi)\in \left\{y^{\prime}(\xi), \frac{\partial f_1(\xi,r_n)}{\partial \xi}, \frac{\partial f_2(\xi,r_n)}{\partial \xi}\right\}~\text{for every}~\xi\in \Omega\setminus (N_1\cup N_2). 
	\end{equation}
	Therefore, by the definition of $H$, we obtain
	\begin{equation}
		\abs{y_n^{\prime}(\xi)}\le l(\xi)~\text{for a.a.}~\xi\in \Omega.
	\end{equation}
	The other conditions for $y_n$ belonging to $H(x_n)$ are easily satisfied.
	
	Now, we show that the sequence $\{y_n\}$ pointwise converges to $y$ in $\mathbb{R}$. Let us fix $\xi\in \Omega$. By \eqref{W7}, we have the following chain inequality
	\begin{equation}
		\Lambda(\xi)=\limsup_{n\to \infty} f_1(\xi,r_n)\le f_1(\xi,r)\le y(x)\le f_2(\xi,r)\le \liminf_{n\to \infty} f_2(\xi,r_n)=\lambda(\xi).
	\end{equation}
	If $\Lambda(\xi)<y(\xi)<\lambda(\xi)$, then $f_1(\xi,r_n)<y(\xi)<f_2(\xi,r_n)$ for $n$ large enough, so $y_n(\xi)=y(\xi)$ for $n>>1$ and the convergence is trivial.
	
	If $\Lambda(\xi)<y(\xi)=\lambda(\xi)$, there exists $n_0\in \mathbb{N}$ such that for $n\ge n_0$, we obtain $f_1(\xi,r_n)<y(\xi)$, then $p_n(\xi)=y(\xi)$, so
	\begin{equation}
		y_n(\xi)=
		\begin{cases}
			y(\xi), ~\text{if}~y(\xi)\le f_2(\xi,r_n)\\
			f_2(\xi,r_n), ~\text{if}~\lambda(\xi)>f_2(\xi,r_n).
		\end{cases}
	\end{equation}
	In this case, the sequence $\{y_n(\xi)\}_{n\ge n_0}$ splits almost into two subsequences; the first one is given by $y_{n_q}(\xi)=y(\xi)$ for every $q\in \mathbb{N}$, which trivially converges to $y(\xi)$; the second by $y_{n_p}(\xi)=f_2(\xi,r_n)$ for every $p\in \mathbb{N}$. For this second subsequences, on the one hand, we have
	\begin{equation}
		\liminf_{n\to \infty} y_{n_p}(\xi)=\liminf_{n\to \infty} f_2(\xi,r_n)\ge \liminf_{n\to \infty} f_2(\xi,r_n)=\lambda(\xi)=y(\xi).
	\end{equation}
	On the other, $y_{n_p}(\xi)=f_2(\xi, r_{n_p})<y(\xi)$, for every $p\in \mathbb{N}$, hence $\limsup_{n\to \infty} y_{n_p}(\xi)\le y(\xi)$. Therefore, $\lim_{n\to \infty} y_{n_p}(\xi)=y(\xi)$. If $\Lambda(\xi)=y(\xi)<\lambda(\xi)$, we reach the same conclusion with symmetric reasoning.
	
	If $\Lambda(\xi)=\lambda(\xi)$, then necessarily $\Lambda(\xi)=f_1(\xi,r)=f_2(\xi,r)=\lambda(\xi)$, implying $\Lambda(\xi)=y(\xi)=\lambda(\xi)$. Then
	\begin{equation}
		y_n(\xi)=\begin{cases}
			y(\xi), ~\text{if}~f_1(\xi,r_n)\le y(\xi)\le f_2(\xi,r_n)\\
			f_2(\xi,r_n)~\text{if}~y(\xi)>f_2(\xi,r_n)\\
			f_1(\xi,r_n)~\text{if}~y(\xi)<f_1(\xi,r_n),
		\end{cases}
	\end{equation}
	for every $n\in \mathbb{N}$. This time the subsequence $\{y_n(\xi)\}$ splits into almost three subsequences. By proceeding as in the previous two situations, it is easy to prove that each converges to $y(\xi)$. In conclusion, we have that $\lim_{n\to \infty}y_n(\xi)=y(\xi)$ for every $\xi\in \Omega$. Moreover, the set $\{y_n\}$ is bounded. This leads us to say that the pointwise convergence of $\{y_n\}$ is dominated. Therefore $y_n\to y$ in $L^2(\Omega)$. Consequently, $H$ is lower semicontinuous.
\end{proof}
We now verify Hypothesis (F4). Let $y\in H(x)$. The definition of the multimap $H$ permits
\begin{equation}
	f_1\left(\xi,\int_{\Omega}\phi(\omega)x(\omega)d\omega \right)\le y(\xi)\le f_2\left(\xi,\int_{\Omega}\phi(\omega)x(\omega)d\omega\right), \xi\in \Omega.
\end{equation}
Utilizing \eqref{W5} we obtain
\begin{equation}
	\abs{y(\xi)}\le \abs{f_1\left(\xi,\int_{\Omega}\phi(\omega)x(\omega)d\omega \right)}+\abs{f_2\left(\xi,\int_{\Omega}\phi(\omega)x(\omega)d\omega \right)}\le 2K_1, \xi\in \Omega.
\end{equation}
Therefore,
\begin{equation}
	\int_{\Omega}\abs{a(t,\xi)y(\xi)}^2d\xi\le 4K_1^2(m(t))^2,
\end{equation}
implying that
\begin{equation}
	\norm{F(t,x)}\le 2K_1m(t), t\in [0,T], x\in X.
\end{equation}
Under assumption (a3), it is obvious that Hypothesis (F4) is satisfied.

Finally, the system \eqref{M1} is approximately controllable in $[0, T]$.
	\section{Acknowledgement}
	The first author also would like to thank Council of Scientific and Industrial Research, New Delhi, Government of India (File No. 09/143(0954)/2019-EMR-I), for financial support to carry out his research work. The second author wishes to express his special thanks to SERB DST, New Delhi, for providing a research project with project grant no- CRG/2021/003343/MS.
	\bibliographystyle{elsarticle-num}
	\bibliography{Hilfer}
	
\end{document}